\documentclass[11pt]{article}

\usepackage{amsmath,amssymb,amsthm}
\usepackage[a4paper,margin=1in]{geometry}
\usepackage{booktabs}
\usepackage{enumitem}
\usepackage{microtype}
\usepackage[title,titletoc]{appendix}
\usepackage[hidelinks]{hyperref}

\newtheorem{theorem}{Theorem}[section]
\newtheorem{proposition}[theorem]{Proposition}
\newtheorem{lemma}[theorem]{Lemma}
\newtheorem{corollary}[theorem]{Corollary}
\newtheorem{conjecture}[theorem]{Conjecture}
\newtheorem*{theoremA}{Theorem A}
\newtheorem*{theoremB}{Theorem B}
\newtheorem*{theoremC}{Theorem C}
\theoremstyle{remark}
\newtheorem{remark}[theorem]{Remark}

\DeclareMathOperator{\diag}{diag}
\DeclareMathOperator{\rank}{rank}
\newcommand{\R}{\mathbb{R}}
\newcommand{\Z}{\mathbb{Z}}
\newcommand{\M}{\mathcal M}
\newcommand{\one}{\mathbf 1}
\newcommand{\ph}{\varphi}
\allowdisplaybreaks[1]
\hypersetup{pdftitle={Morley Simplices in Higher Dimensions},pdfauthor={Quang Hung Tran}}

\title{Morley Simplices in Higher Dimensions:\\Regularity, Reflections, and Counterexamples}
\author{Quang Hung Tran}
\date{}

\begin{document}
\maketitle

\begin{abstract}
Trisecting the dihedral angles of an $n$-simplex defines its Morley simplex. We study the original simplices for which this simplex is regular. A criterion in terms of the Gram matrix of the facet normals reduces the problem to a matrix equation. The derivative of the Morley map at the regular simplex has two explicit eigenvalues, both nonzero for $n\ge3$; thus the regular simplex is an isolated solution. We conjecture that in dimensions four and five a regular Morley simplex forces two hyperplane reflections interchanging disjoint pairs of vertices. We prove that this conclusion fails in every dimension $6\le n\le200$ and in every dimension $n=\binom k2-1$ with $k\ge9$: in these dimensions there are simplices with regular Morley simplex and no hyperplane reflection symmetry. The examples for $8\le n\le200$ have dihedral symmetry of order $2(n+1)$, while the infinite family is based on the Johnson scheme. In dimension four we give exact constructions of two nonregular examples, defined by irreducible polynomials of degrees $18$ and $8$ with Galois groups $S_{18}$ and $S_8$. Neither example is expressible by radicals. The computer assisted existence proofs use exact rational arithmetic and intervals with outward rounding.
\end{abstract}

\noindent
\textbf{Keywords.} Morley's theorem, Morley simplex, dihedral angle, reflection symmetry, Gram matrix, circulant matrix, Johnson scheme, Galois group, interval arithmetic.

\medskip
\noindent
\textbf{2020 Mathematics Subject Classification.}
Primary 51M04; Secondary 51M20, 52B11, 12F10, 65G30.

\tableofcontents

\section{Introduction}

Morley's trisector theorem says that the adjacent angle trisectors of a triangle meet in the vertices of an equilateral triangle \cite{Morley}; see \cite{Child,CoxeterGreitzer,GrinbergOrhon,Hashimoto,OakleyBaker,TranSynthetic} for proofs and \cite{Braude,DergiadesTran} for extensions. Svrtan and Veljan considered a three dimensional analogue \cite{SvrtanVeljan}. In \cite{TranActa} the author defined the Morley tetrahedron by trisecting the six dihedral angles of a tetrahedron and proposed two converse conjectures.

We call a tetrahedron \emph{isosceles} when its three pairs of opposite edges have equal lengths; regular tetrahedra are included. Definition~1 of \cite{TranActa} excluded the regular case, but the proof of the direct theorem establishes preservation of the opposite edge equalities. We use this statement and the inclusive convention of the companion paper \cite{TranTetra}.

They do not. In the companion paper \cite{TranTetra} we found a nonisosceles tetrahedron $T_1$ and an isosceles but nonregular tetrahedron $T_2$ whose Morley tetrahedra are regular. Both have a pair of opposite edges such that the four remaining edges are equal; in other words, each has two reflection symmetries that interchange disjoint pairs of vertices. We conjectured in \cite{TranTetra} that a regular Morley tetrahedron always forces this, and proved it for every tetrahedron with a nontrivial symmetry.

The Morley construction makes sense in every dimension. For each facet of an $n$-simplex, take the $n$ hyperplanes that trisect the dihedral angles along the ridges of that facet, on the side of the facet. They meet in one point, and the $n+1$ points obtained in this way are the vertices of the \emph{Morley simplex}. For $n\ge3$ let $(G_n)$ be the statement:
\begin{quote}
every $n$-simplex whose Morley simplex is regular has two reflection symmetries that interchange disjoint pairs of vertices.
\end{quote}
The main conjecture of \cite{TranTetra} is $(G_3)$. This paper is about $(G_n)$ for $n\ge4$, and more generally about the simplices with regular Morley simplex.

\subsection*{Results}

The first part of the paper sets up the tools. The Gram matrix of the unit normals of the Morley simplex is an explicit function of the Gram matrix $H$ of the unit normals of the original simplex (Proposition~\ref{sx:gram}). Regularity of the Morley simplex becomes the matrix equation \eqref{sx:regularityeq}, which relates $H$ to the matrix $K=(2\cos(\theta_{ij}/3))$ built from the trisected dihedral angles (Theorem~\ref{sx:regularity}). Section~\ref{sx:reflections} gives a criterion for reflection symmetries. The first main result concerns the regular simplex.

\begin{theoremA}[Theorem~\ref{lin:main}]
Let $n\ge3$. There is a neighbourhood of the regular $n$-simplex in the space of $n$-simplices up to similarity in which the regular simplex is the only simplex with regular Morley simplex. The derivative of the Morley map at the regular simplex has exactly two eigenvalues,
\[
\begin{gathered}
\lambda_1=-\frac{(n-2)(q_n-1)}{3n(1+nq_n)},\; \lambda_2=\frac{q_n-1}{3(q_n+1)},\\
q_n=2\cos\Bigl(\frac13\arccos\frac1n\Bigr).
\end{gathered}
\]
\end{theoremA}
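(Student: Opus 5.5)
We parametrize $n$-simplices up to similarity by the Gram matrix $H$ of their unit facet normals: $H$ has unit diagonal, off-diagonal entries $h_{ij}=-\cos\theta_{ij}$, rank $n$, and kernel spanned by a positive vector. Proposition~\ref{sx:gram} gives the Morley Gram matrix $\M(H)$ explicitly, so the Morley map is a smooth map $H\mapsto \M(H)$ between neighbourhoods of the regular point $H_0=\frac{n+1}{n}I-\frac1n J$, at which all $\theta_{ij}=\arccos\frac1n$. The plan is to compute the derivative $D\M(H_0)$ on the tangent space and show that it is invertible. The inverse function theorem then makes $\M$ a local diffeomorphism near $H_0$. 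Since $\M(H_0)=H_0$, no other $H$ near $H_0$ can satisfy $\M(H)=H_0$, which is exactly the isolation claim. Equivalently, one can linearize the regularity equation \eqref{sx:regularityeq} of Theorem~\ref{sx:regularity} in $H$ and $K$ and show that the linearization has trivial kernel.

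The key simplification is symmetry. $H_0$ is invariant under the full symmetric group $S_{n+1}$, which acts on the tangent space by permuting indices, and $D\M(H_0)$ is $S_{n+1}$-equivariant. The tangent space consists of symmetric perturbations $E=(e_{ij})$ of the off-diagonal entries, i.e.\ the dihedral angles, subject to the single linearized constraint $\det=0$. As an $S_{n+1}$-module, the space of edge functions on $K_{n+1}$ decomposes as $\mathbf 1\oplus V_{(n,1)}\oplus V_{(n-1,2)}$, of dimensions $1$, $n$ and $\binom{n+1}{2}-n-1$. The trivial summand is removed by the determinant constraint, which is the linearization of the fact that the dihedral angles of a simplex are not independent. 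That leaves a space of dimension $\binom{n+1}2-1$, matching the dimension of the shape space. The two remaining summands are non-isomorphic irreducibles, so by Schur's lemma $D\M(H_0)$ acts on each of them as a scalar. This explains why there are exactly two eigenvalues.

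The computation itself goes as follows. For the first eigenvalue, take a perturbation in $V_{(n,1)}$, say $e_{ij}=v_i+v_j$ with $\sum v_i=0$, suitably corrected so that it stays tangent to $\{\det=0\}$. For the second, take a perturbation in $V_{(n-1,2)}$, e.g.\ $E$ supported on a $4$-cycle with alternating signs, which has zero row sums and is automatically tangent. Then differentiate the formula of Proposition~\ref{sx:gram}. The inputs are:
\begin{itemize}
\item the derivative of $\theta\mapsto 2\cos(\theta/3)$ at $\arccos\frac1n$, which is where $q_n$ enters, with $q_n^3-3q_n=2/n$;
\item the derivative of the normalization, or matrix inverse, appearing in the Morley Gram formula, evaluated at the circulant-like point $K_0=q_n J+(2-q_n)I$ (diagonal $2$, off-diagonal $q_n$), whose inverse is explicit via $I$ and $J$.
\end{itemize}
Reading off the scalar on each test vector gives $\lambda_1$ and $\lambda_2$. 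Both are nonzero for $n\ge3$: $q_n>1$ because $\frac13\arccos\frac1n<\frac\pi3$, and $n-2\neq0$. Hence $D\M(H_0)$ is invertible.

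I expect the main obstacle to be the $V_{(n,1)}$ eigenvalue. There one has to handle the rank-$n$ constraint carefully: the perturbation must be projected correctly onto the tangent space, and on the target side the image must be re-normalized to unit diagonal and rank $n$. This is where the factors $n-2$ and $1+nq_n$ should emerge. I would first verify the formula numerically for $n=3,4$ against finite differences before simplifying symbolically, using $q_n^3-3q_n=2/n$ to reduce expressions.
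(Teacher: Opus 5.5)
\textbf{Assessment: your approach is the paper's, but the decisive computation is not done and its one concrete input is wrong.}

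Your structural argument matches the paper's. The tangent space at $H_0$ is $\{Z:\one^TZ\one=0\}$, which is exactly $W_1\oplus W_2$, with $W_1\cong V_{(n,1)}$ (your $e_{ij}=v_i+v_j$) and $W_2\cong V_{(n-1,2)}$ (zero row sums). An invertible derivative together with the inverse function theorem then gives isolation. Using Schur's lemma to know in advance that each summand is an eigenspace is a clean shortcut; the paper simply observes this in the computation.

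The gap is that the nonvanishing of the two scalars, which is all the isolation claim needs, is read off from the target formulas rather than derived. Moreover, the one concrete input you give for deriving them is wrong. The matrix $K$ has unit diagonal: $q_{ii}=1$ because $\ell_i(M_i)=t_i$ in \eqref{sx:distances}. Only then does $H=\ph[K]$ hold on the diagonal, since $\ph(1)=1$ but $\ph(2)=-1$. So $K_0=(1-q_n)I+q_nJ$, with eigenvalue $1+nq_n$ on $\one$ and $1-q_n$ on $\one^\perp$; it is not $q_nJ+(2-q_n)I$. With your $K_0$ the same differentiation gives different numbers. On $W_2$, for instance, you would get $1+\frac{2(q_n-1)(q_n+2)}{3(q_n+1)(2-q_n)}>1$ instead of $\lambda_2$. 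Likewise, the factor $1+nq_n$ you expect in $\lambda_1$ is exactly the $\one$-eigenvalue of the correct $K_0$, which for your matrix would be $2+nq_n$.

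You also place the difficulty for $\lambda_1$ in the wrong spot. No tangent correction is needed: $e_{ij}=v_i+v_j$ with $\sum_iv_i=0$ already has $\one^TE\one=2n\sum_iv_i=0$. What makes $W_1$ harder is that $K_0^{-1}=\frac{P}{1-q_n}+\frac{\Pi}{1+nq_n}$, where $\Pi=\frac1N\one\one^T$ and $P=I-\Pi$, does not act on such $Z$ by a scalar. Hence
\[
\delta B=K_0^{-1}ZK_0^{-1}-\eta\bigl(K_0^{-1}ZB_0+B_0ZK_0^{-1}\bigr),\quad \eta=\tfrac{2}{3(1-q_n^2)},\quad B_0=\tfrac{N}{n(1-q_n)^2}P,
\]
picks up a component along $v\one^T+\one v^T$ and a nonzero diagonal. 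The derivative of the normalization, $\delta\M_{ij}=(1-q_n)^2\bigl(\delta B_{ij}+\frac{\delta B_{ii}+\delta B_{jj}}{2n}\bigr)$ for $i\ne j$, therefore contributes. The factor $n-2$ then comes from $1+nq_n-(n-1)(2q_n+1)=(2-n)(1+q_n)$. The simplifications are easiest with $\ph(q_n)=-1/n$ in the factored form $(q_n-1)^2(q_n+2)=2N/n$.

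Until this computation is carried out with the correct $K_0$, your argument shows only that there are at most two eigenvalues. It does not give their values, and so it does not give the invertibility on which the theorem rests.
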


For $n=2$ the first eigenvalue is zero, as it must be by Morley's theorem, and for $n=3$ the two values are the ones found in \cite{TranTetra}.

The second part states the conjectures. In dimensions four and five, the solutions found in our numerical searches had two reflections interchanging disjoint pairs of vertices. This suggests $(G_4)$ and $(G_5)$ (Conjecture~\ref{ld:two-reflections}). The examples in dimensions six and seven show that the same conclusion cannot hold in general. We conjecture that every dimension $n\ge6$ admits a simplex with regular Morley simplex and no hyperplane reflection symmetry (Conjecture~\ref{hd:all-dimensions}). The third part gives the constructions and their proofs.

\begin{theoremB}[Theorems~\ref{hd:counterexample}, \ref{hd:seven-counterexample}, \ref{dh:main} and~\ref{jn:existence}]
Suppose that $6\le n\le200$, or that $n=\binom k2-1$ for some $k\ge9$. Then there is an $n$-simplex whose Morley simplex is regular and which has no hyperplane reflection symmetry. In particular $(G_n)$ is false for these $n$.
\end{theoremB}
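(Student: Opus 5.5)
The plan is to reduce existence to a finite-dimensional nonlinear system inside a symmetry-reduced family, solve it approximately, and certify a nearby exact solution by interval arithmetic. By Theorem~\ref{sx:regularity}, an $n$-simplex, given up to similarity by the Gram matrix $H$ of its unit facet normals, has regular Morley simplex if and only if $H$ and $K=(2\cos(\theta_{ij}/3))$ satisfy \eqref{sx:regularityeq}, where $H_{ij}=-\cos\theta_{ij}$. I will look for $H$ invariant under a group $\Gamma$ acting on the facets. Then \eqref{sx:regularityeq} is $\Gamma$-equivariant, and the unknowns and equations both reduce to one per $\Gamma$-orbit on unordered pairs of facets.

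\emph{Infinite family.} For $n=\binom k2-1$, index the $n+1$ facets by the $2$-subsets of $\{1,\dots,k\}$ and let $\Gamma=S_k$. The pairs of $2$-subsets fall into two orbits, meeting in one point or disjoint, so $H$ lies in the Bose--Mesner algebra of the Johnson scheme $J(k,2)$. A single common angle per orbit would give an algebra-invariant matrix that is forced to be too symmetric and would contain reflections. So I will break symmetry: take $\Gamma$ to be a subgroup such as $A_k$, or the stabiliser of a suitable structure, with a few more orbitals. Equivalently, weight the intersecting and disjoint classes by parameters $(a,b)$ and perturb them by a non-reflection-invariant association class. All such matrices are polynomials in the scheme's adjacency matrices, so their eigenvalues are explicit in $k$. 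The regularity condition then becomes a small system in a fixed number of unknowns with coefficients rational in $k$. I will then do the following, in order:
\begin{enumerate}[label=(\roman*)]
\item solve the $k\to\infty$ limit of this system;
\item verify, with explicit error bounds that are uniform in $k\ge K_0$, that the Jacobian is invertible there, so that the implicit function theorem or a Newton--Kantorovich argument gives a solution for all large $k$;
\item certify the finitely many values $9\le k<K_0$ by interval Krawczyk tests.
\end{enumerate}
Finally I must check that $H$ is positive semidefinite of rank $n$ with kernel spanned by a positive vector, so that it is the Gram matrix of an actual simplex, and that all $\theta_{ij}\in(0,\pi)$.

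\emph{Finite range.} For $8\le n\le200$ I will take $\Gamma=\Z_{n+1}$ acting cyclically, or the dihedral group, so that $H$ is circulant (Theorem~\ref{dh:main}). Circulants are diagonalised by characters, so the rank and positivity conditions become conditions on the eigenvalues $\sum_j c_j\cos(2\pi jm/(n+1))$, and \eqref{sx:regularityeq} reduces to about $(n+1)/2$ scalar equations. For each $n$ I will find an approximate solution numerically, by continuation in $n$ starting from a seed. I will then certify it with an interval Newton/Krawczyk operator in exact rational arithmetic with outward rounding. The dimensions $n=6,7$ are handled separately by explicit certified examples with smaller symmetry groups (Theorems~\ref{hd:counterexample}, \ref{hd:seven-counterexample}).

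\emph{Absence of reflections.} I will use the criterion of Section~\ref{sx:reflections}. A hyperplane reflection acts on the facets as a permutation that is a product of disjoint transpositions, and it must preserve $H$. Once the certified intervals show that the entries of $H$ in distinct $\Gamma$-orbitals are distinct, every symmetry of $H$ lies in the normaliser-type group $\Gamma'$ preserving the orbital structure. It then suffices to check that no involution of $\Gamma'$ with the required cycle type preserves the chosen orbital colouring. This is where the dihedral colouring has to be chosen carefully: a reflection of the $(n+1)$-gon induces a facet permutation that looks like a hyperplane reflection, so the circulant must fail to be symmetric under $j\mapsto -j$, or the induced map must fail the criterion.

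\emph{Main obstacle.} I expect the hardest step to be the uniform-in-$k$ analytic certification in the Johnson family: controlling the Jacobian and the remainder for all $k\ge K_0$ at once rather than by finite computation. For the $200$ dihedral cases, the difficulty is instead finding good starting points and keeping the interval widths small enough for the Krawczyk test to succeed.
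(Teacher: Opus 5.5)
\textbf{Gap: the reflection criterion.} You describe a hyperplane reflection as acting on the facets by a product of disjoint transpositions. By Proposition~\ref{sx:reflectiontest} it acts by \emph{exactly one} transposition and fixes every other vertex, because the $(-1)$-eigenspace of its linear part is a line while $A_i-A_j$ and $A_k-A_l$ are independent for disjoint pairs. With the correct criterion both of your worries disappear, and both of your proposed remedies go wrong.

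For the dihedral family, a rotation of $\Z/N$ fixes no vertex. A reflection $i\mapsto s-i$ fixes at most two vertices, so it swaps at least two pairs when $N\ge5$. Hence no element of $D_N$ acts as a transposition, and distinct $c_m$ pin the isometry group down to $D_N$ (Lemma~\ref{dh:lemma}). Your alternative requirement, that the circulant not be symmetric under $j\mapsto-j$, can never hold: $K$ and $H$ are symmetric, and a symmetric circulant always has $r_k=r_{-k}$. Apart from this, your treatment of $6\le n\le200$ is the paper's.

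\textbf{Gap: the infinite family.} You discard the $S_k$-invariant pattern ($p$ on intersecting labels, $r$ on disjoint ones) on the grounds that it contains reflections. For $k\ge5$ and $p\ne r$ it has none: for distinct labels $X,Y$ some label $Z$ meets exactly one of them, so no transposition preserves $K$. Your replacement is not available:
\begin{enumerate}[label=(\alph*)]
\item for $k\ge5$ every orbital stabiliser of $S_k$ on $2$-subsets contains an odd permutation, so $A_k$ has the same orbitals as $S_k$;
\item $J(k,2)$ has no association class besides ``intersecting'' and ``disjoint'' to perturb by;
\item any finer coherent configuration takes you outside the Bose--Mesner algebra, and you lose the explicit eigenvalues your reduction relies on.
\end{enumerate}
So for $n=\binom k2-1$ you never write down a system, and steps (i)--(iii) have nothing to act on.

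The paper keeps the two-parameter family. Equation \eqref{jn:h0} determines $r$ from $p$, and regularity becomes the single scalar equation $h_1\kappa_2^2=h_2\kappa_1^2$. Since $h_1$ and $\kappa_1$ grow like $k$ while $h_2$ decays like $1/k$, the unscaled equation has no finite limit as $k\to\infty$. The key step is the rescaling $\epsilon=1/k$, $p=p_*+s\epsilon$, with $A=\epsilon h_1$, $B=h_2/\epsilon$, $C_1=\epsilon\kappa_1$ and $C_2=\kappa_2$. Then $\widetilde G=AC_2^2-BC_1^2$ extends continuously to $\epsilon=0$. Interval arithmetic on the compact rectangle $[7/10,23/20]\times[0,1/16]$ certifies the side conditions and a sign change of $\widetilde G$ in $s$. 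The intermediate value theorem then covers all $k\ge16$ at once, with no uniform Jacobian or Newton--Kantorovich estimate. The cases $9\le k\le15$ are checked individually.
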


In dimensions six and seven the examples have an isometry group of order two. For $8\le n\le200$ they belong to a family of simplices with dihedral symmetry of order $2(n+1)$ (Section~\ref{dh:section}); this family appears to exist for every $n\ge8$ (Conjecture~\ref{dh:conj}), which together with the examples in dimensions six and seven would prove Conjecture~\ref{hd:all-dimensions}. The dimensions $n=\binom k2-1$ come from a two parameter family based on the Johnson scheme (Section~\ref{jn:section}).

In dimension four we know three classes of simplices with regular Morley simplex: the regular simplex and two classes $U_1,U_2$. A cyclic five vertex family contains only the regular simplex (Theorem~\ref{ld:d5}).

\begin{theoremC}[Theorems~\ref{ld:U2}, \ref{ld:U1} and~\ref{ld:galois}]
The classes $U_1$ and $U_2$ are given exactly by a root of an irreducible polynomial of degree $18$, respectively $8$, together with square roots. The Galois groups of the two polynomials are $S_{18}$ and $S_8$, and neither simplex can be expressed by radicals.
\end{theoremC}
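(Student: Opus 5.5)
Theorem C splits into three parts: an exact algebraic description of $U_2$, the same for $U_1$, and a Galois computation. I would handle them in that order.

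\textbf{Step 1: reduce to polynomial systems.} For $U_2$ and $U_1$ I would use the symmetry observed numerically: two reflections interchanging disjoint pairs of vertices. This symmetry cuts the Gram matrix $H$ of the unit normals down to a few independent entries $h_{ij}=-\cos\theta_{ij}$. I introduce $c_{ij}=\cos(\theta_{ij}/3)$ as the unknowns, so that $h_{ij}=-(4c_{ij}^3-3c_{ij})$ and $K=(2c_{ij})$ is polynomial in them. The regularity equation \eqref{sx:regularityeq} of Theorem~\ref{sx:regularity} then becomes a polynomial system in a small number of variables. The symmetry makes many of its equations coincide.

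\textbf{Step 2: eliminate and certify.} I would eliminate variables with resultants or a lex Gr\"obner basis, in exact rational arithmetic. The target is a univariate polynomial $P$ in one chosen $c_{ij}$, after discarding factors that belong to the regular simplex or to degenerate or spurious branches. The remaining coordinates should then be rational in the chosen root together with square roots; these square roots come from the normalisation of the normals and from the lower-degree equations left after elimination. I would isolate the relevant real root with intervals using outward rounding. Checking the inequalities (positive definiteness of the Gram data, angles in range, non-regularity) on that interval then shows that the root gives an actual simplex in $U_2$, respectively $U_1$. I expect $\deg P=8$ for $U_2$ and $18$ for $U_1$.

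\textbf{Step 3: irreducibility and Galois groups.} Irreducibility over $\mathbb Q$ I would certify by reduction modulo a prime at which $P$ is irreducible, or by exact factorisation. For the Galois group of a degree-$d$ polynomial ($d=18,8$) I would use Dedekind's criterion. Factoring $P$ modulo several good primes gives cycle types in $G\le S_d$:
\begin{itemize}
\item a $d$-cycle, from irreducibility mod $p$, which makes $G$ transitive;
\item a factorisation pattern $(2,1,\dots,1)$ or $(2,\text{odd},\dots)$, so that some power of the Frobenius element is a transposition;
\item a pattern containing a single cycle of prime length $>d/2$, which gives primitivity via Jordan's theorem.
\end{itemize}
A primitive group containing a transposition is $S_d$. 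Since $S_d$ is not solvable for $d\ge5$, the root is not expressible by radicals. The simplex coordinates generate a field containing this root, and square roots do not change solvability. Hence the simplices themselves are not expressible by radicals.

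\textbf{Main obstacle.} The hardest part is Step 2 for $U_1$. Elimination in the regularity system produces large intermediate resultants. To keep the computation feasible I would choose the variable ordering well, exploit the symmetry to minimise the number of unknowns, and factor at each elimination stage. A further difficulty is making sure the extracted degree-18 factor is exactly the one vanishing at the numerically found root; I would check this with an interval evaluation.
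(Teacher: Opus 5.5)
Your Step~2 differs from the paper's route. The paper never eliminates. The polynomials $P_8$, $P_{18}$ and the expressions $q(t),W(t),E(t),\rho(t)$, resp.\ $A(t),C(t)$, were found by integer relation searches, and the proof only \emph{verifies} them. After the symmetry reduction, $F_{11}-F_{12}$ vanishes by Corollary~\ref{sx:reflectiondistance} ($8\delta^2=p+2$), and $F_{11}-F_{13}$ vanishes by the quadratic satisfied by $q$. The remaining equations are checked to vanish modulo $P_8$, resp.\ $P_{18}$; for $U_1$ this comes after solving the one equation that is linear in $m=cd_Td_P$. Sturm sequences then locate the root.

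Guess-and-verify avoids exactly the blow-up you name as your main obstacle. A complete elimination would give more, namely all solutions with the imposed symmetry. But if you use resultants rather than a lex Gr\"obner basis, you still need an exact back-substitution. Resultants can acquire spurious roots, and an interval evaluation cannot certify an equality.

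Your Step~3 criterion is valid: transitivity, a prime cycle of length $>d/2$ and a transposition force $S_d$. Primitivity there comes from an elementary block argument, not from Jordan's theorem. For $P_8$ your criterion uses the paper's data: the pattern $(7,1)$ modulo $41$, and the cube of the $(3,3,2)$ element modulo $7$. For $P_{18}$ the paper uses no transposition. The pattern $(9,5,3,1)$ modulo $7$ yields a $5$-cycle, Jordan's theorem ($5\le 18-3$) gives $A_{18}\le G$, and the odd $18$-cycle gives $S_{18}$. Your version needs an extra prime with pattern $(2,\text{odd},\dots)$. Chebotarev guarantees that one exists, but you have to find it.

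The real gap is in the radicals step. You assert that ``the simplex coordinates generate a field containing this root''. This is unproved, and it is false for a general simplex. From the coordinates you obtain $\cos\theta_{ij}$ only after the square roots that normalise the normals. The number $c_{ij}=\cos(\theta_{ij}/3)$ is then a root of $4x^3-3x-\cos\theta_{ij}$ over that field and usually does not lie in it.

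The paper supplies the missing link. If some similar copy had coordinates in radicals, so would every $\cos\theta_{ij}$. By Cardano's formula, so would every $q_{ij}=2c_{ij}$, contradicting $G=S_d$.

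For these particular simplices your claim does hold, but the argument has to be given. The number $\cos^2\theta_{ij}$ is rational in the coordinates. Since $G$ is primitive, $\mathbb Q(c_{ij})$ has no intermediate fields. Also $\cos^2\theta_{ij}\notin\mathbb Q$, because otherwise $\deg c_{ij}\le 6<d$. Hence $\mathbb Q(\cos^2\theta_{ij})=\mathbb Q(c_{ij})$.

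Separately, your list of inequalities omits the one that existence rests on. You need $K$ invertible, all $d_i>0$, and the kernel vector $S\propto K^{-1}(1/d_i)_i$ positive. Once $K$ is invertible and $d>0$, \eqref{sx:regularityeq} makes $H$ positive semidefinite of rank $n$ automatically. So it is $S>0$ that must be checked: without it the unit normals do not bound a simplex, and Theorem~\ref{sx:regularity} does not apply.
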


Table~\ref{tab:summary} summarizes what is known in each dimension.

\begin{table}[ht]
\centering\small
\begin{tabular}{lp{.72\textwidth}}\toprule
Dimension&Known examples and results\\\midrule
$n=2$&every triangle (Morley's theorem)\\
$n=3$&regular, $T_1$, $T_2$; $(G_3)$ conjectured, proved for tetrahedra with a nontrivial symmetry \cite{TranTetra}\\
$n=4$&regular, $U_1$, $U_2$; $(G_4)$ conjectured (Section~\ref{ld:experiments})\\
$n=5$&numerical evidence for $(G_5)$; the conjecture remains open\\
$n=6,7$&examples without reflection symmetry (Theorems~\ref{hd:counterexample} and~\ref{hd:seven-counterexample})\\
$8\le n\le200$&dihedral examples without reflection symmetry (Theorem~\ref{dh:main})\\
$n=\binom k2-1$, $k\ge9$&Johnson examples without reflection symmetry (Theorem~\ref{jn:existence})\\
other $n\ge201$&open; the smallest open dimension is $201$\\
all $n\ge3$&the regular simplex is isolated (Theorem~\ref{lin:main})\\\bottomrule
\end{tabular}
\caption{Known examples and results. The rows for dimensions three and four do not assert a complete classification.}\label{tab:summary}
\end{table}

\subsection*{On the computations}

Numerical searches suggested the examples and the conjectures. The qualitative observations reported below do not enter any proof or establish a classification. Each computer assisted result is instead reduced to exact algebraic identities and rational interval bounds. Floating point arithmetic supplies starting points and approximate inverses; the checking programs then treat these as rational data and verify the required inequalities exactly. The programs use only the Python standard library and are supplied as ancillary files, with instructions in Appendix~\ref{sec:reproducibility}.

\part{The Morley simplex}
\label{part:framework}

\section{Morley simplices in arbitrary dimension}\label{sx:framework}

Let $n\ge2$, put $N=n+1$, and let $\Delta=A_1\cdots A_N$ be a nondegenerate $n$-simplex in $\R^n$. Let $F_i$ be the facet opposite $A_i$, $S_i$ its $(n-1)$-dimensional volume, $n_i$ its inward unit normal, and $\ell_i$ the signed distance from the hyperplane of $F_i$, positive inside $\Delta$. Let $V$ be the volume of $\Delta$ and $\theta_{ij}$ the interior dihedral angle along the ridge $F_i\cap F_j$. Put $\ph(x)=(3x-x^3)/2$ and
\[
 q_{ii}=1,\; q_{ij}=2\cos(\theta_{ij}/3)\ (i\ne j),\;
 K=(q_{ij}),\; H=(n_i\cdot n_j).
\]
Since $\ph(2\cos x)=-\cos3x$ and $n_i\cdot n_j=-\cos\theta_{ij}$, we have $H_{ij}=\ph(q_{ij})$ for all $i,j$; we write $H=\ph[K]$, with $\ph$ applied entrywise. Note that $1<q_{ij}<2$ for $i\ne j$. The relation $\sum_iS_in_i=0$ gives
\begin{equation}\label{sx:kernel}
 H\succeq0,\; \rank H=n,\; HS=0,\; S=(S_i)>0.
\end{equation}

Fix $i$. On the side of $F_i$, each dihedral angle $\theta_{ij}$, $j\ne i$, is cut by the trisecting hyperplane through $F_i\cap F_j$ that is closer to $F_i$. A point $X$ lies on that hyperplane exactly when $\ell_i(X)\sin(2\theta_{ij}/3)=\ell_j(X)\sin(\theta_{ij}/3)$, that is, when $\ell_j(X)=q_{ij}\ell_i(X)$. So the $n$ hyperplanes meet in the points with $\ell_j(X)=t\,q_{ij}$ for all $j$, where $t=\ell_i(X)$. The identity $\sum_jS_j\ell_j(X)=nV$ fixes $t$, and there is exactly one common point $M_i$, with
\begin{equation}\label{sx:distances}
 \ell_j(M_i)=t_iq_{ij}\;(1\le j\le N),\; t_i=\frac{nV}{(KS)_i}>0.
\end{equation}
Its barycentric coordinates $S_jt_iq_{ij}/(nV)$ are positive, so $M_i$ lies inside $\Delta$. We call $\M(\Delta)=M_1\cdots M_N$ the \emph{Morley simplex} of $\Delta$. For $n=2$ it is Morley's triangle, and for $n=3$ the Morley tetrahedron of \cite{TranActa,TranTetra}.

\begin{lemma}\label{sx:inertia}
The matrix $K$ is negative definite on $\one^\perp$, its inertia is $(1,n)$, and $\operatorname{sign}\det K=(-1)^n$. In particular $K$ is invertible and $\M(\Delta)$ is nondegenerate.
\end{lemma}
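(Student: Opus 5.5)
The plan is to express $K$ as an entrywise power series in $H$ and then use the Schur product theorem. Define $g$ on $[-1,1]$ by $g(t)=$ the root of $\ph(q)=t$ in $[1,2]$, i.e.\ $g(t)=2\cos\bigl(\tfrac13\arccos(-t)\bigr)$. Since $n_i\cdot n_j=-\cos\theta_{ij}$, we have $q_{ij}=g(H_{ij})$ for $i\ne j$. On the diagonal, $g(1)=2\cos(\pi/3)=1=q_{ii}$. Hence $K=g[H]$ entrywise, with no special treatment of the diagonal. The key analytic claim is the following.

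\emph{Claim.} Expand $g(t)=\sum_{k\ge0}a_kt^k$ around $t=0$. Then $a_0=\sqrt3$, $a_k\le0$ for all $k\ge1$, and the series converges absolutely on $[-1,1]$.

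The radius of convergence is $1$, since the only branch points of the cubic $q^3-3q+2t=0$ are at $t=\pm1$, where its discriminant vanishes. Convergence at the endpoints should follow from Abel/Pringsheim once the coefficients have a fixed sign. For the signs, I would compute $a_k$ by Lagrange inversion applied to $q=\sqrt3+u$, which gives an explicit hypergeometric-type formula, and then read off the sign. A cleaner route is to write $g(t)=2\cos\bigl(\tfrac{\pi}{6}+\tfrac13\arcsin t\bigr)$ and split it into even and odd parts. These are $\sqrt3\cos(\tfrac13\arcsin t)$ and $-\sin(\tfrac13\arcsin t)$. Both have classical expansions whose coefficients have constant sign: $\cos(\tfrac13\arcsin t)$ is decreasing in $t^2$ with nonpositive coefficients beyond the constant, and $\sin(\tfrac13\arcsin t)$ has nonnegative coefficients. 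This sign verification is the main obstacle; I expect it to reduce to the known product formulas for the coefficients of $\sin(a\arcsin t)$ and $\cos(a\arcsin t)$ with $a=1/3$.

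Granting the claim, $K=a_0J+\sum_{k\ge1}a_kH^{\circ k}$, where $H^{\circ k}$ is the $k$-th Hadamard power and $J$ is the all-ones matrix. By \eqref{sx:kernel} and the Schur product theorem, every $H^{\circ k}$ is positive semidefinite. For $x\in\one^\perp$ we have $x^TJx=0$, and so
\[
x^TKx=-\sum_{k\ge1}|a_k|\,x^TH^{\circ k}x\le -|a_1|\,x^THx .
\]
We have $a_1=g'(0)=-1/3\ne0$. If $x^THx=0$, then $Hx=0$, so $x$ is a multiple of $S$ by \eqref{sx:kernel}. But $\one\cdot S>0$, so a multiple of $S$ lies in $\one^\perp$ only if it is zero; hence $x=0$. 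Therefore $K$ is negative definite on the $n$-dimensional subspace $\one^\perp$.

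Since every entry of $K$ is positive, $\one^TK\one>0$. A symmetric matrix that is negative definite on a hyperplane and positive somewhere has exactly one positive eigenvalue, so the inertia of $K$ is $(1,n)$ and $\operatorname{sign}\det K=(-1)^n$. In particular $K$ is invertible.

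Finally, by \eqref{sx:distances} the barycentric coordinate matrix of $M_1,\dots,M_N$ is $\frac1{nV}\diag(t_i)\,K\,\diag(S_j)$. This is a product of invertible matrices, so the points $M_i$ are affinely independent and $\M(\Delta)$ is nondegenerate.
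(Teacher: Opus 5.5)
Your proposal is correct and follows the paper's proof essentially step by step. It uses the same entrywise expansion $K=\sqrt3\,\one\one^T+\sum_{k\ge1}a_kH^{\circ k}$ with $a_k<0$ and $a_1=-\frac13$, positive semidefiniteness of the Hadamard powers, the bound $x^TKx\le-\frac13x^THx$ on $\one^\perp$ via $\ker H=\R S$ with $S>0$, and the invertible barycentric matrix $\diag(t_i)K\diag(S_j)/(nV)$.

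The only real difference is how you get the coefficient signs: you use $g(t)=\sqrt3\cos(\frac13\arcsin t)-\sin(\frac13\arcsin t)$, where the paper uses Lagrange inversion. Your route works. The function $g$ satisfies $(1-t^2)g''-tg'+\frac19g=0$, so $a_{k+2}=\frac{k^2-1/9}{(k+1)(k+2)}a_k$. Hence every $a_k$ with $k\ge1$ has the sign of $a_1=-\frac13$ or $a_2=-\frac{\sqrt3}{18}$.

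For convergence at the endpoints, the right justification is monotone convergence as $t\uparrow1$ (as in the paper), not Pringsheim. This gives $\sum_{k\ge1}|a_k|=\sqrt3-1$, and then Abel's theorem gives the value $g(1)=1$ needed on the diagonal.
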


\begin{proof}
For $-1\le h\le1$ put $f(h)=2\cos(\frac13\arccos(-h))$. Then $f(1)=1$ and $q_{ij}=f(H_{ij})$ for all $i,j$. We claim that
\[
f(h)=\sqrt3-\sum_{m\ge1}c_mh^m,\; c_m>0,\; c_1=\frac13,
\]
with absolute convergence on $[-1,1]$. Put $y=\sqrt3-f(h)$. The equation $\ph(f(h))=h$ becomes
\[
h=3y\Bigl(1-\frac{y}{\sqrt3}\Bigr)\Bigl(1-\frac{y}{2\sqrt3}\Bigr),
\]
and Lagrange inversion gives
\[
c_m=\frac{1}{m3^m}[u^{m-1}]
\Bigl(1-\frac{u}{\sqrt3}\Bigr)^{-m}
\Bigl(1-\frac{u}{2\sqrt3}\Bigr)^{-m}>0.
\]
The branch is analytic for $|h|<1$, because the only finite branch values of the cubic are $h=\pm1$. Letting $h\uparrow1$ gives $\sum_mc_m=\sqrt3-1$, so the series converges absolutely on $[-1,1]$.

Hence $K=\sqrt3\one\one^T-\sum_{m\ge1}c_mH^{\circ m}$, where $H^{\circ m}$ is the $m$-th Hadamard power. It is the Gram matrix of the tensors $n_i^{\otimes m}$, hence positive semidefinite. For $0\ne z\perp\one$,
\[
z^TKz=-\sum_{m\ge1}c_mz^TH^{\circ m}z\le-\frac13z^THz<0,
\]
because $\ker H=\R S$ with $S>0$, and $z$ is not a multiple of $S$. Since $\one^TK\one>0$, the inertia is $(1,n)$. The matrix of barycentric coordinates of $M_1,\dots,M_N$ is $\diag(t_i)K\diag(S_j)/(nV)$, which is invertible.
\end{proof}

The next proposition expresses the normals of $\M(\Delta)$ through $H$ and $K$ alone.

\begin{proposition}\label{sx:gram}
Let $\mu_1,\dots,\mu_N$ be the barycentric coordinates with respect to $\M(\Delta)$, let $T=\diag(t_i)$ and $B=K^{-1}HK^{-1}$. Then the Gram matrix of the gradients $\nabla\mu_i$ is $T^{-1}BT^{-1}$. Consequently, the Gram matrix of the inward unit normals of $\M(\Delta)$ is
\begin{equation}\label{sx:morleymap}
H_\M=\Bigl(\frac{B_{ij}}{\sqrt{B_{ii}B_{jj}}}\Bigr)_{i,j}.
\end{equation}
\end{proposition}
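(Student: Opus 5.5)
We will express the barycentric coordinates with respect to $\M(\Delta)$ through the signed distances $\ell_j$. Everything then reduces to a linear change of variables. The inputs are the identity $\ell_j(X)=\ell_j$ evaluated at a barycentric combination, the formula \eqref{sx:distances}, and Lemma~\ref{sx:inertia}, which gives the invertibility of $K$.

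\textbf{Step 1: express $\ell$ through $\mu$.} Every $\ell_j$ is affine. Write $X=\sum_i\mu_i(X)M_i$ with $\sum_i\mu_i=1$. Then
\[
\ell_j(X)=\sum_i\mu_i(X)\,\ell_j(M_i)=\sum_i\mu_i(X)\,t_iq_{ij},
\]
and in vector form $\ell(X)=K\,T\mu(X)$, using the symmetry of $K$. By Lemma~\ref{sx:inertia}, $K$ is invertible, so $\mu(X)=T^{-1}K^{-1}\ell(X)$. The map $\ell$ is affine and the relation is linear, so it passes to gradients:
\[
\nabla\mu_i=\sum_j (T^{-1}K^{-1})_{ij}\nabla\ell_j .
\]

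\textbf{Step 2: compute the Gram matrix.} Since $\ell_j$ is the signed distance to the hyperplane of $F_j$, positive inside, we have $\nabla\ell_j=n_j$. The Gram matrix of the $\nabla\ell_j$ is therefore $H$. The Gram matrix of the $\nabla\mu_i$ is then
\[
(T^{-1}K^{-1})\,H\,(T^{-1}K^{-1})^T=T^{-1}K^{-1}HK^{-1}T^{-1}=T^{-1}BT^{-1},
\]
where we again use that $K$ and $T$ are symmetric. This proves the first assertion.

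\textbf{Step 3: pass to unit normals.} The barycentric coordinate $\mu_i$ vanishes on the facet of $\M(\Delta)$ opposite $M_i$ and is positive inside. Hence $\nabla\mu_i$ is a positive multiple of the inward unit normal of that facet, and this normal equals $\nabla\mu_i/|\nabla\mu_i|$. We have $|\nabla\mu_i|^2=B_{ii}/t_i^2$. Since $\M(\Delta)$ is nondegenerate, $\nabla\mu_i\neq0$, and so $B_{ii}>0$. Normalizing gives
\[
\frac{B_{ij}/(t_it_j)}{\sqrt{B_{ii}B_{jj}}/(t_it_j)}=\frac{B_{ij}}{\sqrt{B_{ii}B_{jj}}},
\]
where the positive factors $t_i$ cancel. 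This is \eqref{sx:morleymap}.

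\textbf{Main obstacle.} The computation itself is mechanical. The point that needs care is the identification $\nabla\ell_j=n_j$ together with the sign conventions: we need that $\ell_j$ is positive inside, that $n_j$ is the inward normal, and that $t_i>0$. These ensure that the normalized gradients are the inward normals and not their negatives. A secondary point is justifying $B_{ii}>0$. This follows from the nondegeneracy of $\M(\Delta)$ already established in Lemma~\ref{sx:inertia}, since $T^{-1}BT^{-1}$ is a Gram matrix of $n+1$ vectors spanning $\R^n$, none of which is zero.
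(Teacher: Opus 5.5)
Your proof is correct and is essentially the paper's argument. Both proofs write $\ell_j=\sum_i t_iq_{ij}\mu_i$, pass to gradients, and take Gram matrices using the invertibility of $K$ from Lemma~\ref{sx:inertia}; you invert $KT$ before forming the Gram matrix, while the paper writes $H=KT\,G\,TK$ and solves for $G$ afterwards, which is the same computation.
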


\begin{proof}
Every affine function $g$ satisfies $g=\sum_ig(M_i)\mu_i$. Applied to $\ell_j$ and combined with \eqref{sx:distances}, this gives $\ell_j=\sum_it_iq_{ij}\mu_i$, and taking gradients, $n_j=\sum_it_iq_{ij}\nabla\mu_i$. Thus the matrix with rows $n_j$ equals $KT$ times the matrix with rows $\nabla\mu_i$. Taking Gram matrices, $H=KT\,G\,TK$, where $G$ is the Gram matrix of the $\nabla\mu_i$, and $G=T^{-1}BT^{-1}$ follows from Lemma~\ref{sx:inertia}. The inward unit normal of the facet of $\M(\Delta)$ opposite $M_i$ is $\nabla\mu_i/|\nabla\mu_i|$; since $T$ is diagonal with positive entries, normalizing $G$ gives \eqref{sx:morleymap}.
\end{proof}

A simplex is determined up to similarity by the Gram matrix of its unit normals, and it is regular exactly when all off-diagonal entries of that matrix equal $-1/n$. This gives the regularity criterion.

\begin{theorem}[Regularity criterion]\label{sx:regularity}
Let $\Delta$ be an $n$-simplex and $B=K^{-1}HK^{-1}$. The following are equivalent:
\begin{enumerate}[label=\textup{(\alph*)}]
\item $\M(\Delta)$ is regular;
\item $B_{ij}=-\frac1n\sqrt{B_{ii}B_{jj}}$ for all $i\ne j$;
\item there is a vector $d=(d_i)$ with positive entries such that, with $D=\diag(d_i)$,
\begin{equation}\label{sx:regularityeq}
 H=4K\Bigl(D^2-\frac1Ndd^T\Bigr)K.
\end{equation}
\end{enumerate}
If $\M(\Delta)$ is regular with edge length $L$, then \eqref{sx:regularityeq} holds with $d_i=t_i/(\sqrt2L)$. Conversely, let $K$ be a real symmetric $N\times N$ matrix with unit diagonal and $1<q_{ij}<2$ for $i\ne j$, such that $H=\ph[K]$ satisfies \eqref{sx:kernel} for some $S$. Then $H$ is the normal Gram matrix of an $n$-simplex $\Delta$, unique up to similarity, whose dihedral angles satisfy $q_{ij}=2\cos(\theta_{ij}/3)$. If moreover \eqref{sx:regularityeq} holds for some $d$ with positive entries, then $\M(\Delta)$ is regular.
\end{theorem}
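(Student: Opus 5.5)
(a)$\Leftrightarrow$(b) comes straight from Proposition~\ref{sx:gram}. A simplex is regular exactly when every off-diagonal entry of its normal Gram matrix is $-1/n$. By \eqref{sx:morleymap} this says $B_{ij}/\sqrt{B_{ii}B_{jj}}=-1/n$ for $i\ne j$. Here $B_{ii}>0$, since $T^{-1}BT^{-1}$ is the Gram matrix of the nonzero gradients $\nabla\mu_i$.

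For (a)$\Rightarrow$(c) I would compute the gradient Gram matrix of a regular simplex with edge $L$ directly. The barycentric gradients of a regular simplex satisfy $|\nabla\mu_i|=1/h$, where $h=L\sqrt{N/(2n)}$ is the altitude. Combining this with the normal Gram entries $-1/n$ gives
\[
G=\frac{1}{h^2}\Bigl(\frac{N}{n}I-\frac1n\one\one^T\Bigr)=\frac{2}{L^2}\Bigl(I-\frac1N\one\one^T\Bigr).
\]
Proposition~\ref{sx:gram} gives $H=KTGTK$. Substituting $G$ yields $H=\frac{2}{L^2}K(T^2-\frac1N tt^T)K$, which is \eqref{sx:regularityeq} with $d=t/(\sqrt2L)$. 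The positivity $t_i>0$ is \eqref{sx:distances}.

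For (c)$\Rightarrow$(b) I would invert the relation: \eqref{sx:regularityeq} gives $B=4(D^2-\frac1Ndd^T)$. Then $B_{ii}=4d_i^2(1-1/N)=4d_i^2n/N$ and $B_{ij}=-4d_id_j/N$ for $i\ne j$, and so $B_{ij}/\sqrt{B_{ii}B_{jj}}=-1/n$, using $d_i>0$. I think this is the step that needs the positivity of $d$, because it fixes the sign of the square root.

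For the converse, I would first realise $H$ as a normal Gram matrix. Since $H\succeq0$ has rank $n$, it factors as the Gram matrix of vectors $n_1,\dots,n_N\in\R^n$. These are unit vectors because $\ph(1)=1$. The relation $HS=0$ gives $\sum S_in_i=0$ with $S>0$, since $|\sum S_in_i|^2=S^THS=0$. Any $n$ of the vectors are independent: a dependence among $n$ of them, combined with the unique relation spanning $\ker H$, would produce a second independent kernel vector. I would then take the halfspaces $\{x: n_i\cdot x+1\ge0\}$. By a standard Minkowski-type argument, the positive relation makes their intersection a bounded simplex with inward normals $n_i$. The normals determine the simplex up to translation and the choice of facet offsets; since the facet offsets of a simplex with prescribed normals can be any positive multiples, all choices are similar? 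Not quite. Uniqueness up to similarity is the claim that the normal directions determine the simplex up to homothety and translation. I would show this by noting that the facet areas are proportional to the kernel vector, hence unique up to scale.

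Next comes the angle identification, which I expect to be the main obstacle. From $n_i\cdot n_j=-\cos\theta_{ij}$ we get $\ph(q_{ij})=-\cos\theta_{ij}$. Since $\ph(2\cos x)=-\cos3x$ and $x\mapsto2\cos x$ maps $(0,\pi/3)$ bijectively onto $(1,2)$, there is a unique $\alpha_{ij}\in(0,\pi/3)$ with $q_{ij}=2\cos\alpha_{ij}$ and $\cos3\alpha_{ij}=\cos\theta_{ij}$. Both $3\alpha_{ij}$ and $\theta_{ij}$ lie in $(0,\pi)$, where cosine is injective, so $\theta_{ij}=3\alpha_{ij}$ as required. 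With the angles identified, the forward direction (c)$\Rightarrow$(a) applies to $\Delta$, so $\M(\Delta)$ is regular.
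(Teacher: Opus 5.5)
Your proposal is correct and follows the paper's proof closely. The one organizational difference is that you obtain (c) directly from (a), by computing the gradient Gram matrix $\frac2{L^2}(I-\frac1N\one\one^T)$ of the regular Morley simplex via its altitude. This proves the explicit formula $d_i=t_i/(\sqrt2L)$ at the same time, and together with (c)$\Rightarrow$(b)$\Leftrightarrow$(a) it closes the cycle. The paper instead derives (c) from (b) abstractly, with $d=\frac12\sqrt{N/n}\,(\sqrt{B_{ii}})_i$. It then does the regular-simplex computation separately, using $\mu_i(X)=\frac1N+\frac2{L^2}m_i\cdot X$. Both routes are fine.

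The step to repair is uniqueness up to similarity. The instinct you retracted with ``Not quite'' was right, and it is the paper's argument. For a simplex $\{x:n_i\cdot x\ge-a_i\}$, a translation by $v$ changes the support vector $a$ by $(n_i\cdot v)_i$. These vectors form a subspace of $S^\perp$, since $\sum_iS_in_i=0$. That subspace has dimension $n$, because the $n_i$ span $\R^n$, so it is all of $S^\perp$. Modulo translations, only the number $S^Ta>0$ remains, and changing it is a dilation. Your replacement argument does not finish as written. Knowing that the facet areas of any realization are proportional to $S$ gives uniqueness of the simplex only after one more step. You could invoke Minkowski's uniqueness theorem. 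Alternatively, for simplices, you could reconstruct the edge vectors from the barycentric gradients $S_in_i/(nV)$, which are then known up to a common positive factor. Either supplement works, but one of them has to be stated.
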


\begin{proof}
By Proposition~\ref{sx:gram}, (a) and (b) are equivalent. Put $P=I-\frac1N\one\one^T$. If (b) holds, let $e_i=\sqrt{B_{ii}}$ and $E=\diag(e_i)$; then $B=\frac Nn EPE$, and \eqref{sx:regularityeq} follows with $d=\frac12\sqrt{N/n}\,e$, because $DPD=D^2-dd^T/N$. If (c) holds, then $B=4DPD$, so $B_{ii}=4nd_i^2/N$ and $B_{ij}=-4d_id_j/N$, which is (b).

Let $\M(\Delta)$ be regular with edge $L$. The barycentric coordinates of a regular simplex with edge $L$ and centroid $0$ are $\mu_i(X)=\frac1N+\frac2{L^2}m_i\cdot X$, where $m_i$ are the vertices; indeed $m_i\cdot m_j=\frac{L^2}2(\delta_{ij}-\frac1N)$. So the Gram matrix of the gradients is $\frac2{L^2}P$, and Proposition~\ref{sx:gram} gives $B=\frac2{L^2}TPT=4DPD$ with $d_i=t_i/(\sqrt2L)$.

For the converse, let $n_1,\dots,n_N$ be unit vectors in $\R^n$ with Gram matrix $H$. They span $\R^n$ and satisfy $\sum_iS_in_i=0$ with $S>0$, so the set $\{X:n_i\cdot X\ge-1\text{ for all }i\}$ is a nondegenerate simplex with inward unit normals $n_i$. Its shape is unique, since translations and a common dilation account for all choices of the $N$ support numbers. Its dihedral angles satisfy $\cos\theta_{ij}=-H_{ij}=-\ph(q_{ij})=\cos\bigl(3\arccos(q_{ij}/2)\bigr)$, and $3\arccos(q_{ij}/2)\in(0,\pi)$ because $1<q_{ij}<2$. Hence $q_{ij}=2\cos(\theta_{ij}/3)$, and $K$ is the matrix of $\Delta$. By (c)$\Rightarrow$(a), $\M(\Delta)$ is regular.
\end{proof}

In the converse part, the vector $S$ is forced: \eqref{sx:regularityeq} shows that $H$ annihilates $K^{-1}D^{-1}\one$, so $S$ is a positive multiple of $K^{-1}(1/d_i)_i$. In the existence proofs below we either check the positivity of this vector directly or obtain the kernel of $H$ from a symmetry.

\section{Reflections and a symmetry theorem}\label{sx:reflections}

\begin{proposition}\label{sx:reflectiontest}
A hyperplane reflection that maps $\Delta$ to itself interchanges exactly one pair of vertices and fixes the others. The transposition of $A_i$ and $A_j$ is induced by a reflection of $\Delta$ if and only if $|A_iA_k|=|A_jA_k|$ for every $k\notin\{i,j\}$, and this happens if and only if $K$ is invariant under the transposition. More generally, a permutation of the vertices is induced by an isometry of $\Delta$ if and only if it preserves $K$.
\end{proposition}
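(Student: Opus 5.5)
I will prove the last assertion first and deduce the others from it. Any isometry $g$ of $\R^n$ mapping $\Delta$ to itself permutes the vertices, say $gA_k=A_{\sigma(k)}$. It then maps the facet $F_k$ to $F_{\sigma(k)}$ and preserves dihedral angles, so $\theta_{\sigma(i)\sigma(j)}=\theta_{ij}$ and $K$ is invariant under $\sigma$.

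For the converse, suppose $q_{\sigma(i)\sigma(j)}=q_{ij}$. Then $H=\ph[K]$ is also invariant, since $\ph$ acts entrywise. The normals $n_{\sigma(1)},\dots,n_{\sigma(N)}$ therefore have the same Gram matrix as $n_1,\dots,n_N$. Because these vectors span $\R^n$, there is an orthogonal map $O$ with $On_k=n_{\sigma(k)}$. The kernel of $H$ is one dimensional by \eqref{sx:kernel}, and $\sigma$ preserves $H$, so the permuted vector $(S_{\sigma(k)})$ is also in $\ker H$. It is positive and has the same sum as $S$, hence equals $S$. Thus the facet areas are permuted as well.

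Now describe $\Delta$ as $\{X:n_k\cdot X\ge -r\}$, translated so that the incenter is at $0$. The support numbers are then all equal to the inradius $r$, and $O$ maps this set onto itself, sending facet $k$ to facet $\sigma(k)$. It follows that $O$ sends $A_k$, the vertex opposite $F_k$, to $A_{\sigma(k)}$. The key point is that equal support numbers make the fixed-point choice automatic.

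Next, edge lengths. A permutation is induced by an isometry if and only if it preserves all edge lengths $|A_iA_k|$, since an isometry is determined on the affine span and an $n$-simplex is determined up to congruence by its edge lengths. For a transposition $\tau=(i\,j)$, preserving all edge lengths means exactly $|A_iA_k|=|A_jA_k|$ for $k\ne i,j$; the pairs $\{i,k\},\{j,k\}$ are swapped and $|A_iA_j|$ is fixed automatically. Combined with the previous paragraph, this is equivalent to $K$ being invariant under $\tau$.

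Finally, reflections. A reflection $\rho$ in a hyperplane $\Pi$ that preserves $\Delta$ permutes the vertices by a permutation $\sigma$ with $\sigma^2=1$. Every vertex not on $\Pi$ is swapped with its mirror image. $\Pi$ must pass through the centroid and contains an $(n-1)$-dimensional set of directions fixed by $\rho$. The fixed points of $\rho$ on $\Delta$ form the face spanned by the fixed vertices together with the midpoints of the swapped pairs. The affine span of the fixed vertices and of the swapped-pair midpoints has dimension $f-1+p$, where $f+2p=N$, and it must equal $\Pi$, of dimension $n-1=N-2$. This gives $f+p-1=N-2=f+2p-2$, hence $p=1$.

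To justify the dimension count, pass to barycentric coordinates: the points $A_k$ for fixed $k$ and $(A_a+A_b)/2$ for swapped pairs are affinely independent. The fixed set of the affine involution $\rho$ restricted to the span of $\Delta$ is exactly the affine span of these points, since the involution acts on barycentric coordinates by $\sigma$. Its dimension is $f+p-1$, and for a reflection it must be $n-1$. Conversely, if $\tau=(i\,j)$ is induced by an isometry, that isometry is an involution on $\R^n$ whose fixed space has dimension $n-1$ by the same count. It is therefore a hyperplane reflection.

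The step needing the most care is the converse direction from $K$ to an isometry. The argument above hinges on the positivity and uniqueness of $S$, which is what allows the support numbers to be matched.
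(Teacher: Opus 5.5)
Your proof is correct, but it takes a different route from the paper at each of the three steps.

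\textbf{From $K$ to an isometry.} The paper uses the uniqueness up to similarity in Theorem~\ref{sx:regularity} to obtain a similarity of $\Delta$ onto itself. It then upgrades this to an isometry, since such a similarity fixes the centroid of the vertices and preserves the sum of squared distances to it. You instead build the orthogonal map $O$ with $On_k=n_{\sigma(k)}$ directly from the equal Gram matrices. Placing the incenter at the origin makes every support number equal to $r$, so $O\Delta=\{Y:n_{\sigma(k)}\cdot Y\ge-r\}=\Delta$. This is self-contained and gives an isometry at once, with no similarity ratio to remove. With this choice, your step showing $S_{\sigma(k)}=S_k$ is never used. Your closing remark that the argument hinges on $S$ is therefore inaccurate: it hinges on the incenter. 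The invariance of $S$ would be needed if you had put the centroid at the origin, where the support numbers are proportional to $1/S_k$.

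\textbf{Reflections.} The paper observes that two disjoint swapped pairs would put two independent vectors $A_i-A_j$ and $A_k-A_l$ into the one-dimensional $(-1)$-eigenspace of the linear part. Your count of the fixed set, $f+p-1=n-1$ with $f+2p=N$, is the complementary computation on the $(+1)$ side and gives $p=1$ directly.

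\textbf{From edge lengths to a reflection.} The paper simply reflects in the perpendicular bisector of $A_iA_j$, which contains the other vertices. Your chain (edge lengths preserved, hence an isometry, hence an involution with an $(n-1)$-dimensional fixed set, hence a reflection) is valid but longer. It also needs the one-line remark that $g^2$ fixes $N$ affinely independent points and is therefore the identity.

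A small wording point: the fixed points of $\rho$ in $\Delta$ form the convex hull of the fixed vertices and the midpoints, which is generally not a face of $\Delta$. Only its affine span matters, and you justify that part correctly via barycentric coordinates.
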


\begin{proof}
A reflection $\rho$ of $\Delta$ permutes the vertices and is not the identity, so it moves some vertex. If it interchanged two disjoint pairs $A_i,A_j$ and $A_k,A_l$, the vectors $A_i-A_j$ and $A_k-A_l$ would both lie in the $(-1)$-eigenspace of the linear part of $\rho$, which has dimension one; they are linearly independent because the vertices are affinely independent. Hence $\rho$ induces a transposition.

The perpendicular bisector of $A_iA_j$ contains every other vertex exactly under the stated condition on edge lengths, and then the reflection in it induces the transposition. A permutation induced by an isometry preserves dihedral angles and hence $K$. Conversely, a permutation preserving $K$ preserves $H=\ph[K]$, so it is induced by a similarity of $\Delta$ onto itself (Theorem~\ref{sx:regularity}); a similarity that permutes a finite set spanning $\R^n$ affinely fixes its centroid and preserves the sum of the squared distances to it, so it is an isometry.
\end{proof}

\begin{proposition}[Reflection defects]\label{sx:defects}
If $\M(\Delta)$ is regular with edge $L$, then for $i\ne j$
\begin{equation}\label{sx:defecteq}
 \varepsilon_{ij}=q_{ij}+2-\frac{(t_i+t_j)^2}{L^2}\ge0,
\end{equation}
and equality holds exactly when $\Delta$ has the reflection that interchanges $A_i$ and $A_j$. Moreover
\[
 \frac{(t_i+t_j)^2}{L^2(q_{ij}+2)}+\frac{(t_i-t_j)^2}{L^2(2-q_{ij})}\le1.
\]
\end{proposition}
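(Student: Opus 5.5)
The plan is to work in the gradient frame of the Morley simplex. Put $g_k=\nabla\mu_k$. By Proposition~\ref{sx:gram} and the proof of Theorem~\ref{sx:regularity}, regularity of $\M(\Delta)$ with edge $L$ means that the Gram matrix of the $g_k$ is $\frac2{L^2}P$, where $P=I-\frac1N\one\one^T$. Equivalently, $g_k=\frac2{L^2}m_k$ with $m_k$ the vertices of a centred regular simplex. From the proof of Proposition~\ref{sx:gram} we also have $n_j=\sum_k t_kq_{kj}g_k$.

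\emph{Step 1: decomposition.} For $i\ne j$ write
\[
a=n_i-n_j=\sum_kc_kg_k,\qquad b=n_i+n_j=\sum_kd_kg_k,
\]
with $c_k=t_k(q_{ki}-q_{kj})$ and $d_k=t_k(q_{ki}+q_{kj})$. In particular
\[
c_i-c_j=(1-q_{ij})(t_i+t_j),\qquad d_i-d_j=(1+q_{ij})(t_i-t_j).
\]
Since $n_i,n_j$ are unit vectors, $a\perp b$. Using $\ph(q)=H_{ij}$,
\[
|a|^2=2-2\ph(q_{ij})=(q_{ij}-1)^2(q_{ij}+2),\qquad |b|^2=2+2\ph(q_{ij})=(q_{ij}+1)^2(2-q_{ij}).
\]

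\emph{Step 2: projection.} Let $h$ be the unit vector in the direction of $g_i-g_j$. Then $|g_i-g_j|^2=\frac2{L^2}(e_i-e_j)^TP(e_i-e_j)=\frac4{L^2}$, so $h=\frac L2(g_i-g_j)$. For any $c$,
\[
a\cdot h=\tfrac L2\cdot\tfrac2{L^2}\,c^TP(e_i-e_j)=\tfrac1L(c_i-c_j),
\]
and similarly for $b$. Because $a$ and $b$ are orthogonal, Bessel's inequality gives
\[
\frac{(a\cdot h)^2}{|a|^2}+\frac{(b\cdot h)^2}{|b|^2}\le|h|^2=1.
\]
Substituting the formulas from Step 1, the factors $(q_{ij}-1)^2$ and $(q_{ij}+1)^2$ cancel. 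This leaves exactly the stated ellipse inequality. Dropping its second term gives $\varepsilon_{ij}\ge0$, since $q_{ij}\neq1$.

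\emph{Step 3: equality.} Suppose first that $\Delta$ has the reflection exchanging $A_i$ and $A_j$. By Proposition~\ref{sx:reflectiontest}, $K$ is invariant under the transposition, so $(KS)_i=(KS)_j$ and hence $t_i=t_j$. For $k\notin\{i,j\}$ we get $q_{ki}=q_{kj}$, so $c_k=0$. Also $c_j=-c_i$, so $c$ is a multiple of $e_i-e_j$. Thus $a$ is parallel to $h$, and equality holds in $\varepsilon_{ij}\ge0$.

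Conversely, if $\varepsilon_{ij}=0$, then $a$ is parallel to $h$. Hence $c\in\operatorname{span}(e_i-e_j,\one)$, that is, $c_k=\gamma$ for all $k\ne i,j$ and $c_i+c_j=2\gamma$. Let $R$ be the reflection of $\R^n$ in $h^\perp$. It swaps $g_i,g_j$, fixes the other $g_k$, and swaps $n_i,n_j$, because these are unit vectors whose difference is parallel to $h$.

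The goal is then to show that $R$ preserves $\{n_k\}$. This amounts to $t_iq_{ik}=t_jq_{jk}$ for all $k$, and in particular to $\gamma=0$. I expect this to be the main obstacle. First I will try to use $\sum_kS_kn_k=0$ together with $t_k=nV/(KS)_k$ to force $\gamma=0$. Concretely, I would pair $a$ against $\sum_kS_kn_k$, or against the $n_k$ with $k\ne i,j$, and use $(a\cdot g_k)=0$ for those $k$. If that succeeds, $R$ maps the normals to themselves with the transposition. Then $H$, and hence $K$, is transposition invariant, and Proposition~\ref{sx:reflectiontest} gives the reflection of $\Delta$.
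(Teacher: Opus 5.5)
Your Steps 1 and 2 are correct and amount to the paper's argument written in the gradient frame. Since $g_k=\frac2{L^2}m_k$, your unit vector $h=\frac L2(g_i-g_j)$ is $-(M_j-M_i)/L$. The paper projects the edge vector $v=M_j-M_i$ onto the orthogonal directions $n_i\mp n_j$ and reads off $(n_i-n_j)\cdot v=(q_{ij}-1)(t_i+t_j)$ and $(n_i+n_j)\cdot v=(q_{ij}+1)(t_j-t_i)$ directly from \eqref{sx:distances}. Your forward implication in Step~3 is also fine, with one small addition: $(KS)_i=(KS)_j$ needs $S_i=S_j$ as well as the invariance of $K$. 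This holds because the reflection exchanges the facets $F_i$ and $F_j$.

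The gap is the converse, which you leave open, and the tactic you propose for it cannot work. In your frame $\sum_kS_kn_k=\sum_lt_l(KS)_lg_l=nV\sum_lg_l=0$ holds identically, so pairing against it gives no information. Moreover, because $\sum_kg_k=0$, once $a\parallel h$ you have $a=\frac{c_i-c_j}2(g_i-g_j)$ for every value of $\gamma$, so $\gamma$ is invisible in $a$.

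The information you need sits in $b$, and you already have it. If $\varepsilon_{ij}=0$, the first term of your ellipse inequality equals $1$, so the second term vanishes. That is, $b\cdot h=0$ and $t_i=t_j$. Then $2\gamma=c_i+c_j=(q_{ij}-1)(t_j-t_i)=0$, hence $c_k=t_k(q_{ki}-q_{kj})=0$ for all $k\notin\{i,j\}$. Since $t_k>0$, this gives $q_{ik}=q_{jk}$. So $K$ is invariant under the transposition, and Proposition~\ref{sx:reflectiontest} gives the reflection; the auxiliary reflection $R$ is unnecessary.

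This is exactly the paper's route. It first extracts $t_i=t_j$, then notes that the perpendicular bisector of $M_iM_j$ is the hyperplane $\ell_i=\ell_j$. That hyperplane contains every other $M_k$, so $t_kq_{ki}=t_kq_{kj}$.
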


\begin{proof}
Let $v=M_j-M_i$ and $q=q_{ij}$. By \eqref{sx:distances},
\[
 (n_i-n_j)\cdot v=(q-1)(t_i+t_j),\; (n_i+n_j)\cdot v=(q+1)(t_j-t_i).
\]
The vectors $n_i-n_j$ and $n_i+n_j$ are orthogonal, with squared lengths $2-2\ph(q)=(q-1)^2(q+2)$ and $2+2\ph(q)=(2-q)(q+1)^2$. The sum of the squares of the projections of $v$ on these two directions is at most $|v|^2=L^2$, which is the second inequality; the first follows from it. If $\varepsilon_{ij}=0$, then $t_i=t_j$ and $v$ is parallel to $n_i-n_j$. Every other vertex $M_k$ of the regular simplex $\M(\Delta)$ lies on the perpendicular bisector of $M_iM_j$, which is now the hyperplane $\ell_i=\ell_j$. By \eqref{sx:distances}, $t_kq_{ki}=t_kq_{kj}$, so $q_{ik}=q_{jk}$ for all $k\notin\{i,j\}$, and Proposition~\ref{sx:reflectiontest} applies. Conversely, if the reflection exists, it fixes $\M(\Delta)$ and interchanges $M_i$ and $M_j$; then $t_i=t_j$, $v$ is orthogonal to the mirror and so parallel to $n_i-n_j$, and equality holds.
\end{proof}

\begin{corollary}\label{sx:reflectiondistance}
Suppose that $\M(\Delta)$ is regular with edge length $L$. If $\Delta$ has the reflection that interchanges $A_i$ and $A_j$, then
\[
t_i=t_j=L\cos\frac{\theta_{ij}}6,\;\text{equivalently}\; d_i^2=d_j^2=\frac{q_{ij}+2}8.
\]
\end{corollary}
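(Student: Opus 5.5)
The corollary should follow from the equality case of Proposition~\ref{sx:defects}, combined with the identification $d_i=t_i/(\sqrt2L)$ from Theorem~\ref{sx:regularity}.

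Assume $\M(\Delta)$ is regular with edge $L$ and that $\Delta$ has the reflection interchanging $A_i$ and $A_j$. Proposition~\ref{sx:defects} then gives $\varepsilon_{ij}=0$. Its proof also shows $t_i=t_j$ directly: the reflection fixes $\M(\Delta)$ and swaps $M_i,M_j$. Substituting $t_i=t_j$ into $\varepsilon_{ij}=0$ gives
\[
\frac{4t_i^2}{L^2}=q_{ij}+2,
\qquad\text{so}\qquad
t_i^2=\frac{L^2(q_{ij}+2)}4 .
\]

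Next, write $q_{ij}=2\cos(\theta_{ij}/3)$. The half-angle identity $2+2\cos x=4\cos^2(x/2)$ with $x=\theta_{ij}/3$ gives $q_{ij}+2=4\cos^2(\theta_{ij}/6)$. Hence $t_i^2=L^2\cos^2(\theta_{ij}/6)$. Since $t_i>0$ by \eqref{sx:distances}, and $\cos(\theta_{ij}/6)>0$ because $0<\theta_{ij}<\pi$, we may take positive square roots and obtain $t_i=t_j=L\cos(\theta_{ij}/6)$.

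For the equivalent form, Theorem~\ref{sx:regularity} says \eqref{sx:regularityeq} holds with $d_i=t_i/(\sqrt2L)$. Then
\[
d_i^2=\frac{t_i^2}{2L^2}=\frac{q_{ij}+2}{8},
\]
and the same value holds for $d_j$.

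The only point needing care is the choice of $d$. The vector $d$ in \eqref{sx:regularityeq} is only asserted to exist, so the identity should be read with that specific choice. It is in fact forced: the proof of (c)$\Rightarrow$(b) gives $B_{ii}=4nd_i^2/N$, so $d$ is determined by $B$. The computation itself is routine.
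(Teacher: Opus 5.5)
Your proposal is correct and is essentially the paper's proof: from Proposition~\ref{sx:defects} you get $\varepsilon_{ij}=0$ and $t_i=t_j$, hence $4t_i^2=L^2(q_{ij}+2)=4L^2\cos^2(\theta_{ij}/6)$, and $d_i=t_i/(\sqrt2L)$ gives the second form. Your extra remarks are also correct and fill in details the paper leaves implicit: taking positive square roots, and noting that $d$ is forced by $B_{ii}=4nd_i^2/N$ with $d_i>0$.
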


\begin{proof}
Now $\varepsilon_{ij}=0$ and $t_i=t_j$, so $4t_i^2=L^2(q_{ij}+2)=4L^2\cos^2(\theta_{ij}/6)$.
\end{proof}

In the regularity equations this is a linear relation between $d_i^2$ and $q_{ij}$. It explains the identities $8d_i^2=q_{ij}+2$ that appear in \cite{TranTetra} and in Section~\ref{ld:exact}.

In terms of the defects, $(G_n)$ says that the vertices can be labelled so that $\varepsilon_{12}=\varepsilon_{34}=0$; then the perpendicular bisectors of $A_1A_2$ and $A_3A_4$ contain all the other vertices, and they are orthogonal to each other because $A_1-A_2$ and $A_3-A_4$ are. The following theorem is a case in which $(G_n)$ can be proved.

\begin{theorem}[Klein four symmetry on four vertices]\label{sx:v4}
Let $n\ge3$. Suppose that $A_1A_2A_3A_4$ is an isosceles tetrahedron and that every other vertex of $\Delta$ is equidistant from $A_1,\dots,A_4$. If $\M(\Delta)$ is regular, then $\Delta$ has two reflections that interchange disjoint pairs of vertices among $A_1,\dots,A_4$.
\end{theorem}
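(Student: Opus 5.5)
The plan is to use the symmetry hypothesis to reduce everything to a small system of equations in the three eigen-directions of a Klein four-group, and then to show that one of the three reflection defects $\varepsilon_{ij}$ vanishes.

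\textbf{Step 1 (Klein four-group).} Write $|A_1A_2|=|A_3A_4|=a$, $|A_1A_3|=|A_2A_4|=b$, $|A_1A_4|=|A_2A_3|=c$. Each $A_k$ with $k\ge5$ is equidistant from $A_1,\dots,A_4$. Hence each of the three double transpositions $(12)(34)$, $(13)(24)$, $(14)(23)$, fixing $A_5,\dots,A_N$, preserves all edge lengths. By Proposition~\ref{sx:reflectiontest} each is induced by an isometry, so it preserves $K$, $H$, $S$ and therefore the numbers $t_i=nV/(KS)_i$. Thus $t_1=t_2=t_3=t_4=:t$, and $\M(\Delta)$ is mapped to itself with $M_i$ permuted in the same way. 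The linear parts of these isometries commute. On the orthogonal complement of their common fixed space $W$ they act on a $3$-dimensional space $U$, spanned by the $A_i-A_j$ with $i,j\le4$. Since the three nontrivial characters of $\Z_2^2$ are distinct, $U$ splits orthogonally into lines $\R e_1,\R e_2,\R e_3$. Here $e_1,e_2,e_3$ are the unit vectors in the directions of $A_1+A_2-A_3-A_4$, $A_1-A_2+A_3-A_4$ and $A_1-A_2-A_3+A_4$.

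\textbf{Step 2 (coordinates).} In the frame of Step~1, equivariance forces
\[
M_i=m+\alpha\sigma_{i1}e_1+\beta\sigma_{i2}e_2+\gamma\sigma_{i3}e_3,\qquad n_i=p+a'\sigma_{i1}e_1+b'\sigma_{i2}e_2+c'\sigma_{i3}e_3\quad(i\le4),
\]
with $m,p\in W$ and sign vectors $\sigma_{\cdot1}=(+,+,-,-)$, $\sigma_{\cdot2}=(+,-,+,-)$, $\sigma_{\cdot3}=(+,-,-,+)$. Regularity gives $|M_1M_2|^2=4(\beta^2+\gamma^2)$ and the analogous expressions for the other pairs, all equal to $L^2$. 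Hence $\alpha^2=\beta^2=\gamma^2=L^2/8$.

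The equations $\ell_j(M_i)=tq_{ij}$ for $i,j\le4$ give the following, with $X=a'\alpha$, $Y=b'\beta$, $Z=c'\gamma$:
\[
2(Y+Z)=t(1-q_{12}),\quad 2(X+Z)=t(1-q_{13}),\quad 2(X+Y)=t(1-q_{14}).
\]
The identity $|n_1-n_2|^2=2-2\ph(q_{12})$ gives
\[
Y^2+Z^2=\tfrac{L^2}{32}(q_{12}-1)^2(q_{12}+2),
\]
and similarly for the other two pairs. The proof of Proposition~\ref{sx:defects} now reads: $\varepsilon_{12}\ge0$ is Cauchy--Schwarz for $(Y,Z)$, with equality iff $Y=Z$. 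Symmetrically, $\varepsilon_{13}=0$ iff $X=Z$, and $\varepsilon_{14}=0$ iff $X=Y$. Also $\varepsilon_{12}=\varepsilon_{34}$ by the symmetry. So it suffices to prove that two of $X,Y,Z$ coincide; the reflections through $(12)$ and $(34)$, or the analogous pair, then exist by Proposition~\ref{sx:defects}.

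\textbf{Step 3 (main obstacle).} The difficulty is that the six relations above are not obviously enough. Eliminating $q_{ij}$ leaves three equations of the form
\[
y^2+z^2=\kappa\,(y+z)^2\bigl(3-2(y+z)\bigr),\qquad y=Y/t,\ z=Z/t,\ \kappa=L^2/(8t^2),
\]
in the four unknowns $x,y,z,\kappa$. I would therefore bring in the remaining information. This consists of the normalization $|p|^2+a'^2+b'^2+c'^2=1$, the kernel relation $HS=0$ (equivalently the equation $H=4K(D^2-\frac1Ndd^T)K$ of Theorem~\ref{sx:regularity}, restricted to the $U$-block, where $D$ is scalar on indices $1,\dots,4$), and the fact that $n_k\in W$ for $k\ge5$. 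On the $U$-block, $H$ and $K$ are diagonalized by $e_1,e_2,e_3$. Equation \eqref{sx:regularityeq} then becomes three scalar equations relating the eigenvalues $1-q_{12}-q_{13}+q_{14}$ (and its analogues) of $K$ to those of $H=\ph[K]$, with a common factor $4d^2$.

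Subtracting these pairwise, I expect each difference to factor as (one of $q_{12}-q_{13}$, $q_{12}-q_{14}$, $q_{13}-q_{14}$) times a factor that stays positive because $1<q_{ij}<2$. This would force two of the $q$'s, hence two of $X,Y,Z$, to coincide. If that factorization fails, my fallback is to combine the three Cauchy--Schwarz identities with $\alpha^2=\beta^2=\gamma^2$ and a sign analysis of $X,Y,Z$, aiming to exclude the case $X,Y,Z$ pairwise distinct. This is the step I expect to need the most care.
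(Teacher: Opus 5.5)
Steps 1 and 2 are correct. They are also more than you need: once two of $q_{12},q_{13},q_{14}$ coincide, Proposition~\ref{sx:reflectiontest} gives the reflections directly. For example, $q_{12}=q_{13}$ makes $K$ invariant under $(1\,4)$ and $(2\,3)$.

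The genuine gap is in Step~3, because the factorization you expect does not exist. Write $x=q_{12}$, $y=q_{13}$, $z=q_{14}$ (the entries, not your normalized quantities), and $\delta=d_1=\dots=d_4$. Put $\Sigma=1+x+y+z$ and $\Lambda=1+\ph(x)+\ph(y)+\ph(z)$. Since $d$ is orthogonal to the three sign vectors, \eqref{sx:regularityeq} on that block says $g(u)=0$ for $u\in\{x,y,z\}$, where
\[
g(u)=u^3-3u-2+\Lambda+4\delta^2(2u+2-\Sigma)^2,
\qquad
g(x)-g(y)=(x-y)\bigl[x^2+xy+y^2-3-16\delta^2(z-1)\bigr].
\]
The bracket has no sign for $1<x,y,z<2$, and it actually vanishes on a solution. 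The class $U_2$ of Theorem~\ref{ld:U2} satisfies the hypotheses, with $x=p$, $y=z=q$, $p\ne q$ and $16\delta^2=2(p+2)$. There the relation $q^2-(p+4)q+(p+1)^2=0$ gives $p^2+pq+q^2-3=2(p+2)(q-1)$ exactly. So no argument can force a prescribed pair of $x,y,z$ to coincide. Your expected conclusion would even give $x=y=z$, which $U_2$ contradicts. Your fallback only recombines the relations of Step~2, which you already found to be underdetermined, so it is not an argument.

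The missing idea is to use the three equations together rather than pairwise. Suppose $x,y,z$ were pairwise distinct. Then the brackets for $(x,y)$ and for $(x,z)$ would both vanish. Their difference is $(y-z)\bigl(x+y+z+16\delta^2\bigr)$, which is nonzero. Equivalently, as the paper puts it, $g$ is a monic cubic with $u^2$-coefficient $16\delta^2$. Three distinct roots $x,y,z$ would then give $x+y+z=-16\delta^2<0$ by Vieta, which is impossible. Hence two of $x,y,z$ coincide, and Proposition~\ref{sx:reflectiontest} finishes the proof. None of the coordinates from Step~2 are needed.
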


\begin{proof}
The three double transpositions of $A_1,\dots,A_4$ preserve all edge lengths and fix the other vertices. Hence
\[
 q_{12}=q_{34}=x,\; q_{13}=q_{24}=y,\; q_{14}=q_{23}=z,\; d_1=d_2=d_3=d_4=\delta>0,
\]
and $q_{1j}=q_{2j}=q_{3j}=q_{4j}$ for $j>4$. The vector $(1,1,-1,-1,0,\dots,0)$ and the two vectors obtained from it by permuting the first four coordinates are common eigenvectors of $K$ and $H$, and they are orthogonal to $d$. Put $\Sigma=1+x+y+z$ and $\Lambda=1+\ph(x)+\ph(y)+\ph(z)$. The eigenvalues of $K$ on these vectors are $2u+2-\Sigma$, and those of $H$ are $2+3u-u^3-\Lambda$, for $u=x,y,z$. Equation \eqref{sx:regularityeq} gives
\[
 u^3-3u-2+\Lambda+4\delta^2(2u+2-\Sigma)^2=0,\; u\in\{x,y,z\}.
\]
If $x,y,z$ were distinct, they would be the three roots of this cubic in $u$, and the coefficient of $u^2$ would give $x+y+z=-16\delta^2<0$, which is impossible. So two of them coincide, say $x=y$. Then $K$ is invariant under the transpositions $(1\,4)$ and $(2\,3)$, and Proposition~\ref{sx:reflectiontest} completes the proof.
\end{proof}

\section{Near the regular simplex}\label{lin:section}

Let $\Sigma_n$ be the set of normal Gram matrices of $n$-simplices, that is, of symmetric $N\times N$ matrices $H$ with unit diagonal that satisfy \eqref{sx:kernel}. By Theorem~\ref{sx:regularity}, $\Sigma_n$ is the space of $n$-simplices up to similarity, and Proposition~\ref{sx:gram} describes the Morley construction as the map
\begin{equation}\label{lin:map}
\M(H)=\Bigl(\frac{B_{ij}}{\sqrt{B_{ii}B_{jj}}}\Bigr),\; B=K^{-1}HK^{-1},\; K=f[H],
\end{equation}
where $f(h)=2\cos(\frac13\arccos(-h))$ as in the proof of Lemma~\ref{sx:inertia}. The regular simplex corresponds to $H_0$ with off-diagonal entries $-1/n$, and $\M(H_0)=H_0$. Let
\[
q_n=f(-1/n)=2\cos\Bigl(\frac13\arccos\frac1n\Bigr),
\]
so that $\sqrt3<q_n<2$ for $n\ge2$. Consider the following subspaces of the space of symmetric $N\times N$ matrices with zero diagonal:
\[
W_1=\bigl\{x\one^T+\one x^T-2\diag(x):\ x\in\R^N,\ \textstyle\sum_ix_i=0\bigr\},\;
W_2=\{Z:\ Z\one=0\}.
\]
They have dimensions $n$ and $N(N-1)/2-N=(n+1)(n-2)/2$.

\begin{theorem}\label{lin:main}
Let $n\ge2$. Near $H_0$, the set $\Sigma_n$ is a smooth hypersurface in the affine space of symmetric matrices with unit diagonal, with tangent space $W_1\oplus W_2$ at $H_0$. The map \eqref{lin:map} is smooth near $H_0$, maps $\Sigma_n$ into itself, and its derivative at $H_0$ acts on $W_1$ and $W_2$ as multiplication by
\begin{equation}\label{lin:eigen}
\lambda_1=-\frac{(n-2)(q_n-1)}{3n(1+nq_n)},\;\lambda_2=\frac{q_n-1}{3(q_n+1)},
\end{equation}
respectively. Consequently, for $n\ge3$ the regular simplex has a neighbourhood in $\Sigma_n$ that contains no other simplex with regular Morley simplex.
\end{theorem}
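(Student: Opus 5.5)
The plan is to linearize everything at $H_0$ and exploit the $S_N$-symmetry. First, the hypersurface claim. Near $H_0$, \eqref{sx:kernel} is equivalent to the single equation $\det H=0$, since the smallest eigenvalue of $H_0$ is simple (it is $0$ on $\one$, and $H_0$ equals $\frac{N}{n}P$). For $H$ near $H_0$ the eigenvalue stays simple, its eigenvector stays close to $\one$ and hence positive, and the other eigenvalues stay positive. The derivative of the smallest eigenvalue in direction $Z$ is $\one^TZ\one/N$. It is nonzero, so by the implicit function theorem $\Sigma_n$ is a smooth hypersurface with tangent space $\{Z:\one^TZ\one=0\}$. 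This space equals $W_1\oplus W_2$: the sum is direct and has dimension $n+(n+1)(n-2)/2=N(N-1)/2-1$, and for $Z\in W_1$ we get $Z\one=(N-2)x$, so $\one^TZ\one=0$. Smoothness of \eqref{lin:map} follows because $f$ is analytic near $-1/n$, $K_0$ is invertible by Lemma~\ref{sx:inertia}, and $B_{ii}>0$. Proposition~\ref{sx:gram} shows that the map sends $\Sigma_n$ into itself.

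Next, the derivative. The map is $S_N$-equivariant, and as $S_N$-modules $W_1$ is the standard representation and $W_2$ is the irreducible of type $(n-1,2)$. These are non-isomorphic irreducibles, so by Schur's lemma the derivative $D\M$ acts on each as a scalar, and it suffices to compute the derivative on one convenient vector in each. Write $K_0=(1-q_n)I+q_nJ$ with $J=\one\one^T$. Then $K_0^{-1}=\frac{1}{1-q_n}\bigl(I-\frac{q_n}{1-q_n+Nq_n}J\bigr)$. The entrywise derivative gives $\dot K=f'(-1/n)\dot H$ on the off-diagonal entries, where $f'(h)=1/\ph'(f(h))=2/(3(1-q^2))$. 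We then have
\[
\dot B=-K_0^{-1}\dot KB_0-B_0\dot KK_0^{-1}+K_0^{-1}\dot HK_0^{-1},
\]
with $B_0$ a combination of $I$ and $J$. Finally, we differentiate the normalization $B_{ij}/\sqrt{B_{ii}B_{jj}}$.

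On $W_2$ we have $Z\one=0$, so $J$ annihilates $Z$. Every term is then a multiple of $Z$, and the diagonal of $\dot B$ is $0$ because $Z$ has zero diagonal, so the normalization only rescales. The result is
\[
\lambda_2=\frac{-2f'(-1/n)\beta+\alpha}{\beta},
\]
with $\alpha,\beta$ read off from $K_0^{-1}$ and $B_0$ restricted to $\one^\perp$, and this should simplify to $(q_n-1)/(3(q_n+1))$ using $\ph(q_n)=1/n$.

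On $W_1$ the terms involving $J$ survive and the diagonal of $\dot B$ is no longer zero, so the normalization correction matters. This is where I expect the main bookkeeping obstacle. To organize it I will take $x=e_1-e_2$ and track the three entry types $(1,2)$, $(1,k)$ and $(k,l)$ with $k,l\ge3$. The factor $n-2$ in $\lambda_1$ should come out as the difference of two such contributions, and the simplification to the closed form \eqref{lin:eigen} again uses the cubic relation $q_n^3-3q_n=-2/n$. As a sanity check, $\lambda_1=0$ at $n=2$, consistent with Morley's theorem, and for $n=3$ the values should match \cite{TranTetra}.

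Finally, isolation. The set of regular-Morley simplices is $\M^{-1}(H_0)\cap\Sigma_n$. For $n\ge3$ we have $q_n>1$ and $n-2>0$, so $\lambda_1\ne0$ and $\lambda_2\ne0$. Hence $D\M$ restricted to the tangent space $W_1\oplus W_2$ is invertible, and by the inverse function theorem $\M$ restricted to $\Sigma_n$ is a local diffeomorphism at $H_0$. Therefore $H_0$ is the only point near $H_0$ that maps to $H_0$.
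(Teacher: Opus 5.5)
Your structural steps are sound. The hypersurface argument via the simple smallest eigenvalue and the closing inverse function theorem step are what the paper does. Your dimension count is an acceptable substitute for the paper's explicit splitting $x=Z\one/(N-2)$. Your appeal to Schur's lemma is a legitimate way to reduce $W_1$ to the single test vector $x=e_1-e_2$: $W_1\cong S^{(N-1,1)}$ and $W_2\cong S^{(N-2,2)}$ are non-isomorphic and absolutely irreducible, and $D\M$ is $S_N$-equivariant on the tangent space. The paper does not use representation theory. It computes $\delta B$ for an arbitrary $x$ using $K_0^{-1}UK_0^{-1}=\frac{\alpha}{\lambda}U$ and $K_0^{-1}D_xK_0^{-1}=\alpha^2D_x+\alpha\beta U$, which gives the eigenspace property and the eigenvalue at once.

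The gap is that the two numbers in \eqref{lin:eigen}, which are the real content of the theorem, are never derived. Moreover, the relation you plan to simplify with has the wrong sign. Since $q_n=f(-1/n)$ and $\ph(f(h))=h$, one has $\ph(q_n)=-1/n$, i.e.\ $q_n^3-3q_n=2/n$, not $\ph(q_n)=1/n$; for $n=3$ this is $3q^3-9q-2=0$. The identity that makes the computation collapse is $1-\ph(q)=\frac12(q-1)^2(q+2)$, which gives $(q-1)^2(q+2)=2N/n$. With your sign it would give $2(n-1)/n$, and $\lambda_2$ would not come out.

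Your provisional formula for $\lambda_2$ also misplaces the normalization. Let $a=1/(1-q)$ be the eigenvalue of $K_0^{-1}$ on $\one^\perp$, write $B_0=cP$ with $c=N/(n(1-q)^2)$, and put $\eta=2/(3(1-q^2))$. Then $\delta B=(a^2-2\eta ac)Z$ on $W_2$, and one divides by $B_{0,ii}=a^2$. So $\lambda_2=1-2(1-q)\eta c=1-\frac{2(q+2)}{3(q+1)}$.

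On $W_1$ you still need the derivative of the normalization, $\delta\M_{ij}=(1-q)^2\bigl(\delta B_{ij}+(\delta B_{ii}+\delta B_{jj})/(2n)\bigr)$, and an actual evaluation, for instance of the $(1,3)$ entry when $x=e_1-e_2$. The factor $n-2$ then appears through $1+nq-(n-1)(2q+1)=-(n-2)(q+1)$. Without these computations, the nonvanishing of $\lambda_1$ and $\lambda_2$ for $n\ge3$, and hence the isolation statement, is not yet established.
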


\begin{proof}
For $H$ near $H_0$, the smallest eigenvalue $\mu(H)$ of $H$ is simple, with eigenvector close to $\one$, and the other eigenvalues are close to $N/n$. So near $H_0$ the set $\Sigma_n$ is the zero set of the smooth function $\mu$, and the kernel vector is automatically positive. The derivative of $\mu$ at $H_0$ is $Z\mapsto\one^TZ\one/N$, which is not zero, so $\Sigma_n$ is a smooth hypersurface with tangent space $\{Z:\one^TZ\one=0\}$. This space is $W_1\oplus W_2$: given $Z$ with zero diagonal and $\one^TZ\one=0$, put $x=Z\one/(N-2)$; then $\sum_ix_i=0$, and $X=x\one^T+\one x^T-2\diag(x)$ satisfies $X\one=(N-2)x=Z\one$, so $Z-X\in W_2$. The sum is direct because $X\one=0$ forces $x=0$. The map \eqref{lin:map} is smooth near $H_0$, since $f$ is smooth on $(-1,1)$, $K_0=f[H_0]$ is invertible and $B_{ii}>0$; it maps $\Sigma_n$ to $\Sigma_n$ because $\M(\Delta)$ is a simplex.

Write $q=q_n$, $P=I-\frac1N\one\one^T$ and $\Pi=\frac1N\one\one^T$. Then $H_0=\frac NnP$ and $K_0=(1-q)P+\lambda\Pi$ with $\lambda=1+nq$, so
\[
K_0^{-1}=\frac{P}{1-q}+\frac{\Pi}{\lambda}=\alpha I+\beta\one\one^T,\;
\alpha=\frac1{1-q},\;\beta=\frac1N\Bigl(\frac1\lambda-\alpha\Bigr),
\]
and $B_0=K_0^{-1}H_0K_0^{-1}=cP$ with $c=N/(n(1-q)^2)$. The diagonal entries of $B_0$ are $1/(1-q)^2$ and the off-diagonal ones are $-1/(n(1-q)^2)$. Let $Z$ be tangent and put $\eta=f'(-1/n)=1/\ph'(q)=2/(3(1-q^2))$. The derivative of $K$ is $\eta Z$, so
\begin{equation}\label{lin:dB}
\delta B=K_0^{-1}ZK_0^{-1}-\eta\bigl(K_0^{-1}ZB_0+B_0ZK_0^{-1}\bigr),
\end{equation}
and differentiating \eqref{lin:map} at $B_0$ gives
\begin{equation}\label{lin:dM}
\delta\M_{ij}=(1-q)^2\Bigl(\delta B_{ij}+\frac{\delta B_{ii}+\delta B_{jj}}{2n}\Bigr)\;(i\ne j).
\end{equation}
Since $\ph(q)=-1/n$ and $1-\ph(q)=\frac12(q-1)^2(q+2)$, we have $(q-1)^2(q+2)=2N/n$, and therefore
\begin{equation}\label{lin:etac}
(1-q)\eta c=\frac{2N}{3n(1+q)(1-q)^2}=\frac{q+2}{3(q+1)}.
\end{equation}

Let $Z\in W_2$. Then $PZ=ZP=Z$ and $\Pi Z=0$, so $K_0^{-1}Z=\alpha Z$ and $B_0Z=cZ$. By \eqref{lin:dB}, $\delta B=(\alpha^2-2\eta c\alpha)Z$, which has zero diagonal, and by \eqref{lin:dM} and \eqref{lin:etac}
\[
\delta\M=\bigl(1-2(1-q)\eta c\bigr)Z=\Bigl(1-\frac{2(q+2)}{3(q+1)}\Bigr)Z=\lambda_2Z.
\]

Let $Z=X=U-2D_x\in W_1$, where $U=x\one^T+\one x^T$, $D_x=\diag(x)$ and $\sum_ix_i=0$. Using $\one^Tx=0$, $X\one=(N-2)x$ and $K_0^{-1}x=\alpha x$, $K_0^{-1}\one=\one/\lambda$, one finds
\[
K_0^{-1}UK_0^{-1}=\frac\alpha\lambda U,\; K_0^{-1}D_xK_0^{-1}=\alpha^2D_x+\alpha\beta U,
\]
\[
K_0^{-1}XB_0+B_0XK_0^{-1}=c\Bigl(2\alpha X+(N-2)\Bigl(\beta-\frac\alpha N\Bigr)U\Bigr).
\]
Substituting in \eqref{lin:dB} gives $\delta B=\gamma U-2\alpha(\alpha-2\eta c)D_x$ with
\[
\gamma=\frac{(N-2)(\alpha-\eta c)}{N\lambda}+\frac{2\alpha(\alpha-2\eta c)}{N}.
\]
Hence $\delta B_{ij}=\gamma(x_i+x_j)$ for $i\ne j$ and $\delta B_{ii}=(2\gamma-2\alpha(\alpha-2\eta c))x_i$, and \eqref{lin:dM} gives $\delta\M=\lambda_1'X$ with
\[
\lambda_1'=\frac{(1-q)^2}{n}\Bigl(\frac{(N-2)(\alpha-\eta c)}{\lambda}+\alpha(\alpha-2\eta c)\Bigr).
\]
By the computation for $W_2$, $(1-q)^2\alpha(\alpha-2\eta c)=\lambda_2$, and by \eqref{lin:etac}, $(1-q)^2(\alpha-\eta c)=(1-q)(2q+1)/(3(q+1))$. Therefore
\[
\lambda_1'=\frac{q-1}{3n(q+1)}\Bigl(1-\frac{(N-2)(2q+1)}{1+(N-1)q}\Bigr)=-\frac{(N-3)(q-1)}{3n(1+nq)}=\lambda_1.
\]

For $n\ge3$ both $\lambda_1$ and $\lambda_2$ are nonzero, so the derivative of $\M$ restricted to $\Sigma_n$ is invertible at $H_0$. By the inverse function theorem, $\M$ is injective on a neighbourhood of $H_0$ in $\Sigma_n$, and $\M(H)=H_0=\M(H_0)$ forces $H=H_0$ there.
\end{proof}

\begin{remark}\label{lin:remarks}
For $n=2$ we have $W_2=0$ and $\lambda_1=0$, so the derivative vanishes, in agreement with Morley's theorem, which says that $\M$ is constant on $\Sigma_2$. For $n=3$, $3q^3-9q-2=0$ and
\[
\lambda_1=-\frac{q-1}{9(1+3q)}\approx-0.014252,\;\lambda_2=\frac{q-1}{3(q+1)}\approx0.098093;
\]
these are the numbers $\kappa_3$ and $\kappa_2$ in the proof of \cite[Theorem 13.1]{TranTetra}, which uses different coordinates on the space of tetrahedra. For the next dimensions,
\[
\begin{array}{crrrrrr}
n&4&5&6&7&8&12\\\hline
\lambda_1&-.016384&-.015942&-.014899&-.013775&-.012720&-.009538\\
\lambda_2&.096089&.094831&.093968&.093339&.092859&.091716
\end{array}
\]
As $n\to\infty$, $q_n\to\sqrt3$, so $\lambda_1\sim-(\sqrt3-1)/(3\sqrt3\,n)$ and $\lambda_2\to(2-\sqrt3)/3\approx0.0893$. The theorem gives a neighbourhood for each $n$ but says nothing about its size.
\end{remark}

\part{Conjectures in higher dimensions}
\label{part:hdconjectures}

\section{The conjectures}\label{hd:conjectures}

In dimension three all known simplices with regular Morley simplex have two reflections on disjoint pairs of vertices, and this is the content of the main conjecture of \cite{TranTetra}. Our experiments, described in Part~\ref{part:hdresults}, suggest that this persists in dimensions four and five and breaks down in dimension six.

\begin{conjecture}[Two reflections in dimensions four and five]
\label{ld:two-reflections}
Let $n\in\{4,5\}$. If the Morley simplex of an $n$-simplex $\Delta$ is regular, then $\Delta$ has two hyperplane reflection symmetries that interchange disjoint pairs of vertices.
\end{conjecture}

By Proposition~\ref{sx:reflectiontest} each of the two reflections fixes all vertices outside its pair, and the two mirrors are perpendicular. The conjecture is the statement $(G_4)$ and $(G_5)$; it does not claim a list of all similarity classes.

\begin{conjecture}[No reflections from dimension six on]
\label{hd:all-dimensions}
For every $n\ge6$ there is an $n$-simplex with no hyperplane reflection symmetry whose Morley simplex is regular.
\end{conjecture}

This is stronger than the failure of $(G_n)$, since it asks for simplices without any reflection. Theorem~B proves it for $6\le n\le200$ and for $n=\binom k2-1$, $k\ge9$. The dihedral family of Section~\ref{dh:section} suggests a way to prove it in general (Conjecture~\ref{dh:conj}).

\part{Results, possible approaches, and open problems}
\label{part:hdresults}

Section~\ref{ld:experiments} deals with dimensions four and five, including the exact form of the two exceptional classes in dimension four. Sections~\ref{hd:six} and~\ref{hd:seven} give the examples in dimensions six and seven, Section~\ref{dh:section} the dihedral family and Section~\ref{jn:section} the Johnson family. Sections~\ref{hd:directions} and~\ref{hd:open} discuss the remaining dimensions and list open problems.

\section{Dimensions four and five}\label{ld:experiments}

In dimension four, Conjecture~\ref{ld:two-reflections} says that the vertices can be labelled so that
\[
\begin{gathered}
|A_1A_3|=|A_1A_4|=|A_2A_3|=|A_2A_4|,\\
|A_1A_5|=|A_2A_5|,\; |A_3A_5|=|A_4A_5|.
\end{gathered}
\]
The fifth vertex then lies on both mirrors.

The searches in dimension four led to the regular simplex and two nonregular examples, denoted by $U_1$ and $U_2$. Their existence is proved in Section~\ref{ld:exact}. In $U_1$, three vertices form an equilateral triangle of side $1$, the other two vertices are at distance $2.65171181$, and the six edges joining the two groups have length $1.7483635$; the full symmetry group is $S_3\times S_2$. In $U_2$, an apex is at distance $1$ from the four vertices of an isosceles tetrahedron, with one pair of opposite edges of length $1.56445438$ and four edges of length $1.31143372$; its full symmetry group has order $8$. The dihedral angles are approximately $67.615^\circ$, $76.911^\circ$, $106.914^\circ$ for $U_1$ and $63.012^\circ$, $76.551^\circ$, $106.793^\circ$ for $U_2$. Both examples have two reflections interchanging disjoint pairs of vertices.

These examples do not settle the classification in dimension four. Two symmetric families can, however, be handled directly: Theorem~\ref{sx:v4} treats the Klein four family, and the following theorem treats a cyclic action on all five vertices.

\begin{theorem}[The cyclic five vertex family]\label{ld:d5}
Let $\Delta$ be a four-simplex with vertices indexed modulo five, such that all edges $A_iA_{i+1}$ have the same length and all edges $A_iA_{i+2}$ have the same length. If $\M(\Delta)$ is regular, then $\Delta$ is regular.
\end{theorem}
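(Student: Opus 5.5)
The plan is to use the dihedral symmetry to diagonalize the regularity equation \eqref{sx:regularityeq} by the discrete Fourier transform, which leaves two scalar equations in two unknowns. Since all edges $A_iA_{i+1}$ have one length and all edges $A_iA_{i+2}$ another, the rotation $i\mapsto i+1$ and the reversal $i\mapsto -i$ preserve all edge lengths. They are therefore induced by isometries of $\Delta$, and by Proposition~\ref{sx:reflectiontest} they preserve $K$. Hence $K$ is a symmetric circulant,
\[
K=I+x(C+C^{-1})+y(C^2+C^{-2}),
\]
where $C$ is the cyclic shift, $1<x,y<2$, and $x$ is the common value of $q_{i,i+1}$ and $y$ that of $q_{i,i+2}$. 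Similarly $H=\ph[K]$ is circulant with off-diagonal entries $a=\ph(x)$ and $b=\ph(y)$.

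First I pin down $S$ and $d$. The kernel of $H$ is one-dimensional, invariant under $C$, and spanned by a positive vector. So $S$ is a multiple of $\one$, which gives the constraint
\[
1+2\ph(x)+2\ph(y)=0 .
\]
The rotation is an isometry of $\Delta$, so it maps $\M(\Delta)$ to itself and permutes the $t_i$ cyclically. Hence all $t_i$ are equal, and by Theorem~\ref{sx:regularity} we may take $d=\delta\one$ with $\delta>0$. Equation \eqref{sx:regularityeq} then becomes $H=4\delta^2KPK$. All three matrices are simultaneously diagonalized by the Fourier basis. Write $c_1=\cos(2\pi/5)$ and $c_2=\cos(4\pi/5)$, and put
\[
\kappa_1=1+2xc_1+2yc_2,\quad \kappa_2=1+2xc_2+2yc_1,
\]
\[
h_1=1+2ac_1+2bc_2,\quad h_2=1+2ac_2+2bc_1 .
\]
The mode $k=0$ is automatic, since both sides vanish there. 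The modes $k=1,4$ and $k=2,3$ give $h_1=4\delta^2\kappa_1^2$ and $h_2=4\delta^2\kappa_2^2$. Eliminating $\delta$ yields
\[
g(x,y):=h_1\kappa_2^2-h_2\kappa_1^2=0 .
\]
This must be solved together with the constraint.

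The key structural observation is that swapping $x$ and $y$ interchanges $(\kappa_1,h_1)$ with $(\kappa_2,h_2)$. Hence $g(y,x)=-g(x,y)$, so $g$ is divisible by $x-y$. Also, $x=y$ together with the constraint is exactly the regular simplex, because then all dihedral angles are equal. It therefore remains to show that $\tilde g=g/(x-y)$ has no zero on the arc
\[
\Gamma=\{(x,y)\in(1,2)^2:1+2\ph(x)+2\ph(y)=0\}.
\]
Since $\ph$ is decreasing on $(1,2)$, this arc is the graph of a decreasing function $y(x)$. By Lemma~\ref{sx:inertia} we have $\kappa_1,\kappa_2<0$ and $h_1,h_2>0$. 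So an equivalent and perhaps cleaner formulation is that the ratio $\sqrt{h_1}/|\kappa_1|$ differs from $\sqrt{h_2}/|\kappa_2|$ off the diagonal. One could try to show that $R=\sqrt{h_1}/|\kappa_1|-\sqrt{h_2}/|\kappa_2|$ is strictly monotone along $\Gamma$, with its only zero at the symmetric point.

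The main obstacle is this last sign verification. I would first try the purely algebraic route. Write $\tilde g$ as a polynomial over $\Z[c_1]$, using $c_1+c_2=-\frac12$ and $c_1c_2=-\frac14$, so that it is symmetric in $x,y$. Then express it in $s=x+y$ and $p=xy$, and do the same for the constraint $1+2\ph(x)+2\ph(y)=0$. Next, take the resultant with respect to $p$ to get a univariate polynomial in $s$. Finally, isolate its real roots by a Sturm sequence and check that none corresponds to a point of $\Gamma$ with $x\ne y$. The range of $s$ on $\Gamma$ is an explicit interval whose endpoints come from $x\to1$ or $y\to1$. If the resultant is unwieldy, I would fall back on parametrizing $\Gamma$ by $x$, bounding $\tilde g$ away from zero by rational interval arithmetic on a subdivision of the $x$-range, and treating a small neighbourhood of the diagonal separately. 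That neighbourhood is handled by the sign of $\tilde g$ at the regular point, which is nonzero because $\lambda_2\neq0$ in Theorem~\ref{lin:main}.
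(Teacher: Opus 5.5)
Your reduction is the paper's. $K$ is circulant with first row $(1,x,y,y,x)$, $S\propto\one$ gives $1+2\ph(x)+2\ph(y)=0$, $d$ is constant, and the two Fourier modes give $h_k=4\delta^2\kappa_k^2$. The function $g=h_1\kappa_2^2-h_2\kappa_1^2$ is antisymmetric under $x\leftrightarrow y$, which is the paper's $g(-\frac12-a)=-g(a)$ with $a=\ph(p)$. Only the final certification differs.

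The paper keeps $g$ as a function of $a$ on the half $a\le-\frac14$ of the range $h_1,h_2>0$. It proves by rational interval arithmetic that $g<0$ on $[-0.809017,-0.255]$ ($61$ subintervals) and $g'>0$ on $[-0.255,-0.25]$, so the zero at the regular point is the only one. Dividing out $x-y$, as you do, turns this into a pure nonvanishing statement. Your algebraic variant then closes the proof by hand, and I checked that it works. With $s=x+y$ the constraint reads $3s\,xy=s^3-3s-1$, which is linear in $xy$, so no resultant is needed. Substituting gives, on $\Gamma$,
\[
\tilde g=\frac{\sqrt5\,Q(s)}{18s^2},\qquad Q(s)=s^6+6s^5-39s^4-5s^3+66s^2+12s-5.
\]
Along $\Gamma$, $s$ increases toward the diagonal, so $2+2\cos40^\circ<s\le2q_4\approx3.6201$. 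On $[7/2,\,3.621]$ one has $Q'(s)=s^3(6s^2+30s-156)-15s^2+132s+12>0$ and $Q(3.621)<0$. Hence $\tilde g<0$ on all of $\Gamma$, including its nonsimplex part, and no computer is needed.

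Two small corrections. First, the ends of $\Gamma$ are where one coordinate tends to $2$ and the other to $2\cos40^\circ\approx1.532$. Neither coordinate approaches $1$, and the maximum of $s$ is attained at the regular point, not at an end. Second, your appeal to $\lambda_2\neq0$ is valid, because the cyclic tangent direction $Z$ satisfies $Z\one=0$ and so lies in $W_2$. However, it only yields an unspecified neighbourhood of the diagonal, which cannot be glued to a finite interval computation. It is also unnecessary, since $\tilde g$ is a polynomial that does not vanish at the regular point.
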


\begin{proof}
The rotation $i\mapsto i+1$ is a symmetry of $\Delta$, so $K$ is circulant with first row $(1,p,r,r,p)$ and the $d_i$ are equal. Put $a=\ph(p)$. Since $H$ annihilates the vector $\one$ of facet volumes, $\ph(r)=-\frac12-a$. The nonconstant real Fourier subspaces have dimension two, and on them $H$ has the eigenvalues
\[
 h_1=\frac{5+\sqrt5}{4}+\sqrt5\,a,\; h_2=\frac{5-\sqrt5}{4}-\sqrt5\,a,
\]
and $K$ has the eigenvalues
\[
 \kappa_1=1+2p\cos72^\circ+2r\cos144^\circ,\; \kappa_2=1+2p\cos144^\circ+2r\cos72^\circ.
\]
Positivity of $h_1,h_2$ gives $-(1+\sqrt5)/4<a<(\sqrt5-1)/4$. By \eqref{sx:regularityeq} with $d$ constant, $h_k=4d_1^2\kappa_k^2$ for $k=1,2$, so regularity is equivalent to
\begin{equation}\label{ld:g}
 g(a)=h_1\kappa_2^2-h_2\kappa_1^2=0.
\end{equation}
Interchanging $p$ and $r$ shows $g(-\frac12-a)=-g(a)$, so it suffices to consider $a\le-\frac14$. Rational interval arithmetic, with adaptive bisection, gives
\[
 g<0\ \text{on}\ [-809017/10^6,\,-51/200],\; g'>0\ \text{on}\ [-51/200,\,-1/4].
\]
The first left endpoint is smaller than $-(1+\sqrt5)/4$. Here $\sqrt5$ and the inverse of $\ph$ are enclosed by rational bisection, and the derivative uses $dp/da=1/\ph'(p)$ and $dr/da=-1/\ph'(r)$. The first inequality needs $61$ subintervals and the second one interval. Since $g(-1/4)=0$, this is the only zero of $g$ with $a\le-1/4$, and by the symmetry the only zero overall. Hence $p=r$, all edges are equal, and $\Delta$ is regular.
\end{proof}

The numerical searches in dimension five also produced solutions with two reflections on disjoint pairs of vertices. Some approximations appeared to have no third disjoint transposition symmetry. We have not certified these observations, and they provide no classification in dimension five. Conjecture~\ref{ld:two-reflections} asks only for two reflections.

\subsection{The exact form of \texorpdfstring{$U_1$ and $U_2$}{U1 and U2}}\label{ld:exact}

Both classes have two reflections on disjoint pairs, and their symmetry reduces the regularity criterion to a few equations, which we can solve exactly. By Proposition~\ref{sx:reflectiontest} the entries $q_{ij}$ are constant on the orbits of the symmetry group on pairs of vertices, and the $d_i$ are constant on its orbits on vertices. For $N=5$ we write \eqref{sx:regularityeq} as
\begin{equation}\label{ld:system}
F_{ij}=4\sum_{k=1}^5d_k^2q_{ik}q_{jk}-\frac45u_iu_j-\ph(q_{ij})=0\;(i\le j),\; u=Kd.
\end{equation}

\begin{theorem}[The class $U_2$]\label{ld:U2}
Let $p\approx1.6262490796$ be the unique positive root of
\[
\begin{split}
P_8(t)={}&436t^8+4404t^7+11348t^6-144840t^5-143874t^4\\
&+911670t^3+843917t^2-1471902t-1469159,
\end{split}
\]
and put
\[
q=\frac{p+4-\sqrt{3(4-p^2)}}{2},\; \delta=\sqrt{\frac{p+2}8}.
\]
Let $W=W(p)$, $E=E(p)$ and $\rho=\rho(p)$, where $W,E,\rho\in\mathbb Q[t]$ are the polynomials of degree seven given in Appendix~\ref{app:U2}. Then $3<W<4$, $E>0$, $\rho>0$ and $W\rho^2=\delta^2E$. Put $w=\sqrt W$ and $\varepsilon=\sqrt E$, let $K$ have the entries
\[
\begin{gathered}
q_{12}=q_{34}=p,\; q_{13}=q_{14}=q_{23}=q_{24}=q,\\
q_{i5}=w\; (1\le i\le4).
\end{gathered}
\]
and let $d=(\delta,\delta,\delta,\delta,\varepsilon)$. Then $K$ and $d$ satisfy \eqref{ld:system} and define a four-simplex with regular Morley simplex. Its vertex $A_5$ is equidistant from $A_1,\dots,A_4$, it has $|A_1A_2|=|A_3A_4|$, the other four edges of $A_1A_2A_3A_4$ are equal, and it has the reflections $(1\,2)$ and $(3\,4)$. Its full isometry group has order $8$. Numerically $q\approx1.8049155097$ and $w\approx1.8671107974$; this is the class $U_2$.
\end{theorem}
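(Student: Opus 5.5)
The plan is to verify the converse part of Theorem~\ref{sx:regularity} directly for the proposed $K$ and $d$. Everything reduces to a finite list of algebraic identities in $\mathbb Q(p,\sqrt{3(4-p^2)},w,\varepsilon)$ and a few sign checks.

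First I reduce the system using the symmetry. The group generated by $(1\,2)$, $(3\,4)$ and $(1\,3)(2\,4)$ has order $8$ and preserves the pattern of $K$. Its orbits on the unordered pairs $i\le j$ are $\{11,\dots,44\}$, $\{55\}$, $\{12,34\}$, $\{13,14,23,24\}$ and $\{i5\}$. Since $d$ is constant on the vertex orbits, $F_{ij}$ is invariant under the group, so \eqref{ld:system} reduces to the five equations $F_{11},F_{55},F_{12},F_{13},F_{15}$. The choice of $q$ is designed to satisfy one of these automatically. The relation $\delta^2=(p+2)/8$ is the one from Corollary~\ref{sx:reflectiondistance} for the reflection $(1\,2)$. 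The formula for $q$ comes from solving a quadratic, namely the equation expressing the vanishing of the defect $\varepsilon_{13}$ rearranged, or equivalently the eigenvalue equation of $K$ and $H$ on the vector $(1,-1,0,0,0)$. So $F_{11}-F_{12}$ should vanish identically once $\delta^2=(p+2)/8$. I will check this by computing in $\mathbb Q(p)[s]$ with $s^2=3(4-p^2)$.

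Next I eliminate $w$ and $\varepsilon$. In the remaining equations, $w$ enters polynomially and $\varepsilon$ enters through $\varepsilon^2=E$ and $\varepsilon w$ (via $u=Kd$). I will square away the odd occurrences, use $w^2=W$ and $\varepsilon^2=E$, and reduce modulo $P_8$, so that each equation becomes an element of $\mathbb Q(p)[s]$ that must be identically zero. The relation $W\rho^2=\delta^2E$ is there to fix the sign and branch: it encodes that a certain linear combination, presumably $\rho w=\delta\varepsilon$, holds with positive signs. Its verification, together with $\rho>0$, lets me undo the squaring without sign ambiguity. Concretely, each identity is checked by polynomial reduction modulo $P_8(t)$ in exact rational arithmetic, after clearing $s$ by splitting into $s$-free and $s$-linear parts.

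Then come the inequalities and the geometric conditions. I will isolate $p$ in a rational interval by sign changes of $P_8$. Positivity of the root is unique by Descartes' rule, since the coefficient signs change once. Rational interval evaluation then gives $3<W<4$, $E>0$ and $\rho>0$, and also $1<p,q,w<2$; note that $W<4$ is exactly $w<2$. By the remark after Theorem~\ref{sx:regularity}, I also need $H=\ph[K]$ to be positive semidefinite of rank $4$ with a positive kernel vector. The kernel vector $S\propto K^{-1}(1/d_i)$ can be computed in closed form from the orbit structure and is of the shape $(\sigma,\sigma,\sigma,\sigma,\tau)$; its positivity is an interval check. The rank condition follows from \eqref{sx:regularityeq} together with Lemma~\ref{sx:inertia}-type inertia, since $D^2-dd^T/N$ is positive semidefinite of rank $4$ when $d>0$ (by Cauchy--Schwarz it annihilates only $D^{-1}\one$ direction-wise) and $K$ is invertible by a determinant sign check. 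The converse of Theorem~\ref{sx:regularity} then yields the simplex with regular Morley simplex.

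The symmetry claims come last. By Proposition~\ref{sx:reflectiontest}, permutations preserving $K$ are exactly the isometries. The values $p,q,w$ are pairwise distinct numerically, so the stabilizer of $K$ is the order-$8$ group above, and it contains the transpositions $(1\,2)$ and $(3\,4)$. Equidistance of $A_5$ and the edge equalities follow from the same invariance. I expect the main obstacle to be the elimination step: correctly tracking the branches when squaring, and keeping the exact reductions with the degree-$7$ data manageable. This is where the auxiliary polynomial $\rho$ and the identity $W\rho^2=\delta^2E$ are essential.
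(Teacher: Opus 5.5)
Your outline follows the paper's route: reduction to the five orbit equations, $8\delta^2=p+2$ from Corollary~\ref{sx:reflectiondistance}, the substitution $\delta\varepsilon=\rho w$, exact reduction modulo $P_8$, interval checks, positivity of $K^{-1}(1/d_i)_i$, and the stabilizer of $K$. However, the exact verification step fails as you describe it.

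You keep $s=\sqrt{3(4-p^2)}$ formal over $\mathbb Q(p)$ and require the $s$-free and $s$-linear parts of each equation to vanish modulo $P_8$. That demands the equations at both roots $q$ and $q'=p+4-q\approx3.82$ of $x^2-(p+4)x+(p+1)^2$, whereas $W,E,\rho$ belong to the smaller root only. Concretely, after $\delta\varepsilon w=\rho W$ one has $F_{11}=\frac{24}{5}\delta^2q^2-\frac{16}{5}\bigl(\delta^2(1+p)+W\rho\bigr)q+c$, with $c\in\mathbb Q(p)$ free of $q$. So $F_{11}(q)=F_{11}(q')$ would force $\delta^2(p+10)=2W\rho$, which is false (roughly $5.27$ versus $1.75$). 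Hence the $s$-linear part of $F_{11}$ is nonzero and your test would reject a true identity.

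The missing observation is that $\sqrt{3(4-p^2)}$ already lies in $\mathbb Q(p)$: Appendix~\ref{app:U2} gives $q$ as a polynomial $q(t)$. The paper substitutes $q(p)$ and checks modulo $P_8$ two things: that $q(p)$ satisfies the quadratic, and that $F_{11}$, $F_{55}$, $F_{15}/w$ vanish. The first is $F_{11}=F_{13}$, because $F_{11}-F_{13}=\frac12(p-q)\bigl[q^2-(p+4)q+(p+1)^2\bigr]$ once $8\delta^2=p+2$. It then uses $1<q(p)<2$ to identify $q(p)$ with the smaller root in the statement. Keeping $s$ formal could also work, with the test $a^2\equiv3(4-p^2)b^2$ plus a sign check, but not with $a=b=0$.

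Your account of the quadratic is also off. The eigenvector $(1,-1,0,0,0)$ gives $8\delta^2=p+2$, i.e.\ $F_{11}=F_{12}$. The quadratic comes from $(1,1,-1,-1,0)$. And $\varepsilon_{13}$ cannot vanish, since $(1\,3)$ is not a symmetry. No squaring is needed either: $\delta\varepsilon=\rho w$ puts $F_{11},F_{55}\in\mathbb Q(p)$ and $F_{15}\in w\,\mathbb Q(p)$ directly.

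Second, Descartes' rule does not give uniqueness of the positive root. The coefficient signs of $P_8$ are $+\,+\,+\,-\,-\,+\,+\,-\,-$, which has three sign changes, so the rule allows one or three positive roots. The margin is also not large: $P_8(3)=240400$, while its individual terms are of order $10^7$. The paper uses a Sturm sequence here, and you need that or another certified root count.

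With these two repairs, the rest of your plan agrees with the paper's proof. That covers invertibility of $K$ from the eigenvalues $1-p$, $1+p-2q$ and the $2\times2$ block on $\operatorname{span}(\one_{\{1,2,3,4\}},e_5)$. It also covers positivity of the kernel vector and the stabilizer argument from the certified inequalities $p<q<w$.
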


\begin{proof}
The permutations $(1\,2)$, $(3\,4)$ and $(1\,3)(2\,4)$ preserve $K$ and $d$, so the fifteen equations \eqref{ld:system} reduce to five. Put $\Sigma=1+p+2q$, $u_b=\delta\Sigma+w\varepsilon$ and $u_a=4w\delta+\varepsilon$; these are the entries of $u$ at the base vertices and at $A_5$. The five equations are
\begin{align*}
F_{11}&=4\delta^2(1+p^2+2q^2)+4\varepsilon^2W-\tfrac45u_b^2-1,\\
F_{12}&=4\delta^2(2p+2q^2)+4\varepsilon^2W-\tfrac45u_b^2-\ph(p),\\
F_{13}&=4\delta^2(2q+2pq)+4\varepsilon^2W-\tfrac45u_b^2-\ph(q),\\
F_{55}&=16\delta^2W+4\varepsilon^2-\tfrac45u_a^2-1,\\
F_{15}&=4w(\delta^2\Sigma+\varepsilon^2)-\tfrac45u_bu_a-\ph(w).
\end{align*}
First, $F_{11}-F_{12}=4\delta^2(1-p)^2-(1-\ph(p))$ vanishes because $1-\ph(p)=\frac12(1-p)^2(p+2)$; this is Corollary~\ref{sx:reflectiondistance}. Next, $F_{11}-F_{13}=4\delta^2\bigl((1-q)^2+(p-q)^2\bigr)-(1-\ph(q))$, and with $8\delta^2=p+2$ this equals $\frac12(p-q)\bigl[(p+2)(p-q)+(1-q)^2\bigr]$; it vanishes because $q$ is a root of $q^2-(p+4)q+(p+1)^2$. Finally, $\delta\varepsilon=\rho w$ gives
\[
u_b^2=\delta^2\Sigma^2+2\Sigma W\rho+WE,\;
u_a^2=16W\delta^2+8W\rho+E,\;
u_bu_a=w\bigl(4\delta^2\Sigma+E+\rho(\Sigma+4W)\bigr),
\]
and $\ph(w)=w(3-W)/2$. Hence $F_{11}$, $F_{55}$ and $F_{15}/w$ lie in $\mathbb Q(p)$, and so does $q$, which is the polynomial $q(p)$ of Appendix~\ref{app:U2}. A computation modulo $P_8$ shows that these three elements vanish, that $q(p)$ satisfies the quadratic equation above, and that $W\rho^2=\delta^2E$.

A Sturm sequence shows that $P_8$ has exactly one positive root. Rational interval arithmetic at this root gives $1<p<2$, $1<q<2$, $3<W<4$, $E>0$ and $\rho>0$. In particular $1<w<2$, $\varepsilon=\rho w/\delta>0$, and $q$ is the smaller root of the quadratic, as in the statement. The matrix $K$ has the eigenvalues $1-p$ (twice) and $1+p-2q$, and on the span of $\one_{\{1,2,3,4\}}$ and $e_5$ it acts by $\bigl(\begin{smallmatrix}\Sigma&w\\4w&1\end{smallmatrix}\bigr)$; the interval computation shows that these are invertible and that the vector $K^{-1}(1/d_i)_i$ is positive. Since \eqref{ld:system} is \eqref{sx:regularityeq}, the matrix $H$ is positive semidefinite of rank four with a positive kernel vector, and Theorem~\ref{sx:regularity} applies. The same interval computation gives $p<q<w$. A permutation preserving $K$ must therefore fix $A_5$ and preserve the two pairs $\{1,2\}$ and $\{3,4\}$ as an unordered pair of pairs. There are exactly eight such permutations, generated by $(1\,2)$, $(3\,4)$ and $(1\,3)(2\,4)$. These are the full isometry group.
\end{proof}

\begin{theorem}[The class $U_1$]\label{ld:U1}
Let $b\approx1.6254291551$ be the unique root in $(1,2)$ of
\[
\begin{split}
P_{18}(t)={}&8168t^{18}+135252t^{17}+970494t^{16}+2451863t^{15}-5010906t^{14}\\
&-38126877t^{13}-34880165t^{12}+179734110t^{11}+360727044t^{10}\\
&-415345704t^9-1288930653t^8+703616631t^7+2452369115t^6\\
&-1419047031t^5-2449217616t^4+2285215840t^3+643683867t^2\\
&-1537758084t+500360812.
\end{split}
\]
There are polynomials $A,C\in\mathbb Q[t]$ of degree $17$, given in the ancillary file \texttt{check\_U1\_U2.py}, such that $a=A(b)\in(1,2)$ and $C=C(b)\in(1,4)$. Put $c=\sqrt C$, $d_T=\sqrt{(a+2)/8}$ and $d_P=\sqrt{(b+2)/8}$. Let $K$ have the entry $a$ for pairs inside $T=\{1,2,3\}$, $q_{45}=b$, and $c$ for pairs between $T$ and $P=\{4,5\}$, and let $d=(d_T,d_T,d_T,d_P,d_P)$. Then $K$ and $d$ satisfy \eqref{ld:system} and define a four-simplex with regular Morley simplex and symmetry group $S_3\times S_2$. Numerically $a\approx1.8472457767$ and $c\approx1.8031041137$; this is the class $U_1$.
\end{theorem}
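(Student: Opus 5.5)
The proof will follow that of Theorem~\ref{ld:U2}. The group $S_3\times S_2$, acting on $T$ and $P$, preserves $K$ and $d$. It has two orbits on vertices and three on pairs: inside $T$, the pair $\{4,5\}$, and the pairs between $T$ and $P$. So the fifteen equations \eqref{ld:system} reduce to five, namely $F_{11},F_{12},F_{44},F_{45},F_{14}$. Put $\Sigma_T=1+2a+2c$ and $\Sigma_P=1+b+3c$. Then $u_T=d_T(1+2a)+2cd_P$ and $u_P=3cd_T+d_P(1+b)$.

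First, $F_{11}-F_{12}$ and $F_{44}-F_{45}$ vanish identically. This is Corollary~\ref{sx:reflectiondistance}, because $8d_T^2=a+2$ and $8d_P^2=b+2$, and $4d^2(1-x)^2=1-\ph(x)$ whenever $8d^2=x+2$. That leaves three equations, $F_{11}$, $F_{44}$ and $F_{14}$. Next I remove the square roots. Products such as $c\,d_Td_P$ occur, so I will use $c^2=C$. I will also need to control $d_Td_P=\frac18\sqrt{(a+2)(b+2)}$.

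The cleanest approach is as in $U_2$. In the ancillary file I would certify an auxiliary rational expression $\rho(b)$ with $d_Td_P\,c=\rho$, or equivalently the identity $(a+2)(b+2)C=64\rho^2$ modulo $P_{18}$. Then $u_T^2$, $u_P^2$ and $u_Tu_P/c$ all lie in $\mathbb Q(b)$, and $\ph(c)=c(3-C)/2$. Hence $F_{11}$, $F_{44}$ and $F_{14}/c$ become polynomials in $b$. I then reduce each of them modulo $P_{18}$ in exact rational arithmetic and check that the remainders are zero.

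For existence, a Sturm sequence will show that $P_{18}$ has exactly one root in $(1,2)$. Rational interval arithmetic will then enclose $b$, and through it $a=A(b)$, $C=C(b)$ and $\rho$. From these enclosures I get $1<a,b<2$, $1<C<4$ (so $1<c<2$), and the correct sign of $\rho$. The positivity of $\rho$ is what makes the unsquared identity $d_Td_Pc=\rho$ hold, rather than only its square.

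Next, $K$ decomposes into isotypic pieces. It has eigenvalue $1-a$ on the vectors supported on $T$ with zero sum, and $1-b$ on $e_4-e_5$. On the span of $\one_T$ and $\one_P$ it acts by the matrix $\bigl(\begin{smallmatrix}1+2a&2c\\3c&1+b\end{smallmatrix}\bigr)$. The intervals will show that $K$ is invertible and that $K^{-1}(1/d_i)$ is positive. Theorem~\ref{sx:regularity} then gives a simplex whose Morley simplex is regular.

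For the symmetry group I use Proposition~\ref{sx:reflectiontest}. The enclosures will separate the three values $a$, $b$, $c$. A permutation preserving $K$ then preserves the graph of $a$-pairs, which is a triangle on $T$, so it fixes $T$ and $P$ setwise. This gives exactly $S_3\times S_2$.

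The main obstacle is obtaining $A$ and $C$, together with the auxiliary $\rho$, and certifying the reductions modulo $P_{18}$. To find them, I would eliminate variables from the three equations, for instance with resultants after setting $a=8d_T^2-2$. The elimination yields $P_{18}$ as an irreducible factor, and $A$ and $C$ come from solving the linear subresultant. Correctness of the proof depends only on the final exact reductions and the interval bounds, not on how $A$ and $C$ were found.
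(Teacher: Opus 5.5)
Your proposal is correct and follows essentially the same route as the paper. Both proofs reduce to the five orbit equations and use Corollary~\ref{sx:reflectiondistance} for $F_{11}-F_{12}$ and $F_{44}-F_{45}$. Both rationalize $m=c\,d_Td_P$ (with $m>0$ from intervals) and check exact reductions modulo $P_{18}$. Both then use the block decomposition of $K$ with positivity of $K^{-1}(1/d_i)_i$, and Proposition~\ref{sx:reflectiontest} for the symmetry group.

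The one difference is that the paper never needs to supply your auxiliary $\rho$. The equation $F_{11}=0$ is linear in $m$ with coefficient $-\frac{16}{5}(1+2a)\ne0$, so it defines $m\in\mathbb Q(b)$ directly. One then checks only $F_{44}$, $cF_{14}$ and $m^2=CD_TD_P$ modulo $P_{18}$.
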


\begin{proof}
There are five orbits of pairs $i\le j$. Write $D_T=d_T^2$, $D_P=d_P^2$, $m=cd_Td_P$, $u_T=d_T(1+2a)+2cd_P$ and $u_P=d_P(1+b)+3cd_T$. As before, $F_{11}-F_{12}$ and $F_{44}-F_{45}$ vanish by Corollary~\ref{sx:reflectiondistance}. Using $u_T^2=D_T(1+2a)^2+4(1+2a)m+4CD_P$ and the analogous expansions, the remaining equations $F_{11}$, $F_{44}$ and $cF_{14}$ become
\begin{align*}
&4D_T(1+2a^2)+8CD_P-\tfrac45\bigl(D_T(1+2a)^2+4(1+2a)m+4CD_P\bigr)=1,\\
&4D_P(1+b^2)+12CD_T-\tfrac45\bigl(D_P(1+b)^2+6(1+b)m+9CD_T\bigr)=1,\\
&4C\bigl(D_T(1+2a)+D_P(1+b)\bigr)\\
&-\tfrac45\bigl(m(1+2a)(1+b)+3CD_T(1+2a)+2CD_P(1+b)+6Cm\bigr)=\tfrac{3C-C^2}2,
\end{align*}
together with $m^2=CD_TD_P$. The first equation is linear in $m$ and determines $m\in\mathbb Q(b)$. The other two equations and $m^2=CD_TD_P$ hold exactly modulo $P_{18}$. A Sturm sequence shows that $b$ is the only root of $P_{18}$ in $(1,2)$, and interval arithmetic at $b$ gives $1<b<c<a<2$ and $m>0$. The matrix $K$ has the eigenvalues $1-a$ and $1-b$ and acts by $\bigl(\begin{smallmatrix}1+2a&2c\\3c&1+b\end{smallmatrix}\bigr)$ on the span of $\one_T$ and $\one_P$; interval arithmetic shows that $K$ is invertible and that $K^{-1}(1/d_i)_i$ is positive. Theorem~\ref{sx:regularity} gives the simplex. Since $a$, $b$ and $c$ are distinct, every permutation preserving $K$ preserves the sets $T$ and $P$. Conversely, every permutation within either set preserves $K$. Proposition~\ref{sx:reflectiontest} therefore gives the full isometry group $S_3\times S_2$.
\end{proof}

\begin{theorem}\label{ld:galois}
The polynomials $P_8$ and $P_{18}$ are irreducible over $\mathbb Q$ with Galois groups $S_8$ and $S_{18}$. Consequently $p$ and $b$ cannot be expressed by radicals, and no simplex similar to $U_1$ or $U_2$ has vertex coordinates expressible by radicals.
\end{theorem}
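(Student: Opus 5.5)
The plan has two parts. First we certify the Galois groups of $P_8$ and $P_{18}$ by the standard factorization-modulo-primes method (Dedekind's theorem). Then we transfer non-solvability from $p$ and $b$ to the vertex coordinates of any simplex similar to $U_1$ or $U_2$.

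For the Galois groups, we reduce each polynomial modulo a list of small primes not dividing its discriminant and not dividing its leading coefficient ($436$, respectively $8168$). By Dedekind's theorem, the factorization pattern modulo such a prime $\ell$ is the cycle type of some element of the Galois group $G\le S_m$, where $m=8$ or $18$. The steps are as follows.
\begin{enumerate}
\item Irreducibility over $\mathbb Q$ follows from one prime at which the reduction is irreducible. Failing that, it follows from two primes whose factor degree patterns admit no common partial sum strictly between $0$ and $m$; content is trivial after checking the gcd of the coefficients.
\item Since $G$ is then transitive, Jordan's criterion finishes the job. We find a prime whose factorization is (irreducible of prime degree $r$ with $m/2<r<m-2$) times distinct factors of degrees prime to $r$; for $m=18$ take $r=11$, $13$ or $17$, for $m=8$ take $r=5$. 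A suitable power of that Frobenius is an $r$-cycle, so $G$ is primitive and contains an $r$-cycle, and Jordan's theorem gives $G\supseteq A_m$.
\item A further prime with pattern $(2,1,\dots,1)$, or any pattern with an odd number of even-length cycles, gives a transposition or an odd permutation, so $G=S_m$. Alternatively, $G=S_m$ follows from checking that the discriminant is not a square.
\end{enumerate}
These are finite computations with exact integers, done in the ancillary Python files. The only real effort is searching primes until the desired patterns appear; by Chebotarev this search should be short.

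For the geometric consequence, $S_m$ is not solvable for $m\ge5$, so neither $p$ nor $b$ lies in a radical extension of $\mathbb Q$. Now suppose a simplex similar to $U_2$ (respectively $U_1$) had all vertex coordinates in some radical extension $L$. Distances squared, and hence the normal Gram matrix entries, lie in $L$. Normalizing by similarity requires only ratios of squared lengths, and square roots of these stay inside a radical extension. The dihedral cosines $\cos\theta_{ij}=-H_{ij}$ are then in a radical extension.

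Next we recover the parameter from these cosines. The entries $q_{ij}$ satisfy the cubic $\ph(q)=H_{ij}$, which is solvable by radicals (Cardano, allowing complex radicals, which is the standard meaning). Hence $p=q_{12}$, respectively $b=q_{45}$, would be expressible by radicals, a contradiction. The label $q_{12}$ or $q_{45}$ is well defined up to the symmetry group, since by Theorems~\ref{ld:U2} and~\ref{ld:U1} the values $p<q<w$ and $b<c<a$ are distinct. This lets us identify the entry equal to $p$ or $b$ intrinsically, as the smallest off-diagonal entry of $K$.

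The main obstacle is purely computational: producing certified factorization patterns modulo primes for the degree-$18$ polynomial, and making sure those primes avoid the discriminant (checked by a squarefree test modulo $\ell$). I would first try the primes up to a few hundred and record the patterns.
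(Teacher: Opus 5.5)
\textbf{The step that fails.} Your option $r=17$ for $m=18$ does not work. It violates your own bound $r<m-2=16$, and the conclusion is genuinely false there. The group $\mathrm{PSL}(2,17)$, acting on the projective line over $\mathbb F_{17}$, is a doubly transitive group of degree $18$ and order $2448$ that contains the $17$-cycle $z\mapsto z+1$. So a Frobenius element of type $(17,1)$ gives primitivity but not $A_{18}\subseteq G$. This is exactly the pattern the paper finds modulo $229$. The same obstruction occurs for $m=8$, $r=7$ with $\mathrm{PGL}(2,7)$, which you rightly avoided.

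You have two ways to repair this:
\begin{enumerate}[label=(\roman*)]
\item Restrict to $r\in\{11,13\}$. This requires finding a prime at which $P_{18}$ has an irreducible factor of that degree.
\item Do what the paper does. Use the $(17,1)$ element only for double transitivity, hence primitivity. Then add a prime cycle of length at most $m-3$: the $5$-cycle obtained as the ninth power of an element of type $(9,5,3,1)$ modulo $7$. Jordan's theorem for primitive groups then gives $A_{18}\subseteq G$.
\end{enumerate}

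\textbf{Comparison with the paper.} Otherwise your route is the paper's. It uses Dedekind's theorem at primes dividing neither the leading coefficient nor the discriminant, a Jordan-type theorem, and the cubic $x^3-3x-2\cos\theta_{ij}$ to return from the dihedral cosines to $p$ and $b$. The paper's group-theoretic step has two stages. A pattern $(m-1,1)$ gives primitivity. Then it uses a transposition for $P_8$ (the cube of a $(3,3,2)$ element modulo $7$), or the $5$-cycle for $P_{18}$. Yours is the one-step criterion: a transitive group containing a prime cycle of length $r$ with $m/2<r<m-2$ contains $A_m$. With $r=5$ this is fine for $P_8$.

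Two further remarks on the Galois computation. Your step 3 is automatic: the $m$-cycle coming from an irreducible reduction is odd because $m$ is even. Also, the argument is complete only once concrete primes are exhibited. The paper's are $17,41,7$ for $P_8$ and $193,229,7$ for $P_{18}$.

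\textbf{Geometric part.} Two small points. First, the normal Gram matrix lies in a radical extension of $L$, not in $L$ itself, because normalizing the normals needs square roots; this suffices for your argument. Second, identifying $p$ or $b$ as the smallest off-diagonal $q_{ij}$ is unnecessary. Every $q_{ij}$ is a root of $x^3-3x-2\cos\theta_{ij}$ over a radical field, and $p$ (respectively $b$) is one of them.
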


\begin{proof}
By Dedekind's theorem, modulo a prime that divides neither the leading coefficient nor the discriminant, the degrees of the irreducible factors give the cycle type of an element of the Galois group $G$. Modulo $17$, $P_8$ is irreducible; modulo $41$ it factors with degrees $7,1$; modulo $7$ with degrees $3,3,2$. So $P_8$ is irreducible and $G$ contains an $8$-cycle. A $7$-cycle fixing a point makes $G$ doubly transitive, hence primitive, and the cube of an element of type $(3,3,2)$ is a transposition. A primitive group containing a transposition is symmetric, so $G=S_8$.

Modulo $193$, $P_{18}$ is irreducible; modulo $229$ it factors with degrees $17,1$; modulo $7$ with degrees $9,5,3,1$. As before $G$ is primitive and contains an $18$-cycle. The ninth power of an element of type $(9,5,3,1)$ is a $5$-cycle. By Jordan's theorem a primitive group of degree $m$ that contains a cycle of prime length at most $m-3$ contains the alternating group; since an $18$-cycle is odd, $G=S_{18}$.

Neither group is solvable. If a simplex similar to $U_2$ had coordinates in radicals, then so would its unit normals and the numbers $\cos\theta_{ij}=-\ph(q_{ij})$; each $q_{ij}$ is a root of $x^3-3x-2\cos\theta_{ij}$, so $p$ would be expressible by radicals. The same argument applies to $b$ and $U_1$.
\end{proof}

\begin{remark}
Integer relation searches suggested the defining polynomials for $U_1$ and $U_2$; the exact identities above establish the constructions independently of those searches. The edge lengths give further algebraic relations. For example, if the lateral edges of $U_2$ have length one, the squared base edges are approximately $2.4475174946$ and $1.7198583951$, and their ratio $r\approx1.4230924485$ satisfies
\[
\begin{split}
&78181r^8-1176580r^7+7163223r^6-22603834r^5+39354595r^4\\
&-36674496r^3+15630705r^2-1653858r+11664=0.
\end{split}
\]
For $U_1$ with triangle side one, the squared distance between the two remaining vertices is approximately $7.0315755039$ and the squared cross edges are approximately $3.0567749302$; their minimal polynomials have degree $18$. By contrast, the tetrahedron $T_2$ is expressible by square roots, while $T_1$ is defined by a sextic with Galois group $S_6$ \cite{TranTetra}.
\end{remark}

\section{An example in dimension six}\label{hd:six}

The following example has only one nonidentity isometry, which interchanges three pairs of vertices simultaneously.

\begin{theorem}\label{hd:counterexample}
There is a six-simplex with regular Morley simplex whose isometry group has order two and contains no hyperplane reflection. In particular $(G_6)$ is false.
\end{theorem}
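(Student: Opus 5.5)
The plan is to produce the example by imposing a single involution $\sigma$ of the seven vertices that is a product of three disjoint transpositions, $\sigma=(1\,2)(3\,4)(5\,6)$, fixing $A_7$. We require $K$ and $d$ to be $\sigma$-invariant, which cuts \eqref{sx:regularityeq} down to the orbit equations. The pairs of vertices split into orbits under $\sigma$: three fixed pairs $\{1,2\},\{3,4\},\{5,6\}$, the pairs $\{i,7\}$ in three orbits of size two, and the remaining $12$ cross pairs in six orbits of size two. That leaves $12$ unknowns $q$, together with $4$ unknowns $d$ (one per vertex orbit). On the equation side, the entries of \eqref{sx:regularityeq}, i.e.\ the analogue of \eqref{ld:system} with $N=7$, fall into the $\sigma$-orbits of pairs $i\le j$. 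Counting $(i,i)$ as well, there are $4$ diagonal orbits and $12$ off-diagonal ones, so $16$ equations. Since $(K,d)$ and $\lambda(K,d)$ are not both solutions, no scaling freedom needs to be removed, and generically we expect a discrete solution set.

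The work then proceeds in three steps.

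\begin{enumerate}[label=\textup{(\roman*)}]
\item \emph{Locate a solution numerically.} Run Newton's method on this $16\times16$ system from random starting points with all $q$ in $(1,2)$ and $d>0$. Discard solutions with an extra symmetry, namely those where some defect $\varepsilon_{ij}$ of Proposition~\ref{sx:defects} is numerically zero, or where the $12$ values of $q$ are not pairwise distinct across orbits.
\item \emph{Certify the solution.} Since the solution will not be in closed form, apply a Krawczyk/interval Newton test in rational arithmetic with outward rounding. Take a rational box around the approximate solution, an approximate inverse $Y$ of the Jacobian, and verify that $x_0-YF(x_0)+(I-YJ(\mathbf X))(\mathbf X-x_0)$ lies in the interior of $\mathbf X$. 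Here $\ph$ is a polynomial, so $F$ is exact, and no transcendental functions are needed. This proves a unique true solution $(K,d)$ exists in the box.
\item \emph{Check the hypotheses of the converse in Theorem~\ref{sx:regularity}.} On the box, verify $1<q_{ij}<2$ and $d>0$. Also show that $K$ is invertible and that $K^{-1}(1/d_i)_i$ is entrywise positive, using interval Gaussian elimination or a rational approximate inverse with a Neumann-series bound. Since \eqref{sx:regularityeq} holds, $H=4KDPDK$ is automatically positive semidefinite of rank $n=6$ with that kernel vector. Theorem~\ref{sx:regularity} then yields a six-simplex with regular Morley simplex.
\end{enumerate}

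It remains to pin down the symmetry group. By Proposition~\ref{sx:reflectiontest}, isometries correspond to permutations preserving $K$. Interval enclosures will show that the $12$ orbit values of $q$ are pairwise separated. This already forces any $K$-preserving permutation to preserve the partition of pairs into these orbits. The three singleton-pair orbits $\{1,2\},\{3,4\},\{5,6\}$ carry distinct values, so each of these pairs is preserved setwise. In particular $A_7$ is fixed, and each $\pi$ either swaps or fixes each pair. Distinctness of the values $q_{17}$, $q_{37}$, $q_{57}$ and of the cross values then shows that the independent choices must be coordinated. Inspecting, for example, $q_{13}\ne q_{14}$, the only options are the identity and $\sigma$. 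Hence the group has order two. Because $\sigma$ moves three pairs, it is not a reflection, and Proposition~\ref{sx:reflectiontest} shows no transposition preserves $K$. So there is no hyperplane reflection, and $(G_6)$ fails.

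The main obstacle is step (i) combined with making step (ii) succeed. The system must have a nondegenerate solution, with Jacobian invertible on the symmetric subspace, that does not collapse onto a more symmetric one. Near the regular simplex there is nothing to find, by Theorem~\ref{lin:main}. I would first try homotopy continuation from random starting points and select a well-conditioned root before certifying it.
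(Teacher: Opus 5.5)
Your proposal is correct and follows essentially the paper's route. In both, a $\sigma$-invariant solution of the regularity equations is certified by an interval Newton/contraction test in exact rational arithmetic, a Neumann-series bound shows that $K^{-1}(1/d_i)_i$ is positive, and every other $K$-preserving permutation is excluded.

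There are two minor differences. First, the paper certifies the full $28\times28$ system around a $\sigma$-symmetrized point and gets $\sigma$-invariance of the zero from uniqueness in a $\sigma$-invariant box. Your $16\times16$ fixed-point system works equally well, because invertibility of the full Jacobian implies invertibility on the fixed subspace. Second, the paper rules out extra symmetries by checking all $5038$ remaining permutations directly. That check does not need your additional requirement that the $12$ orbit values of $q$ be pairwise separated.
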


The following rounded coordinates in $\R^3\oplus\R^3$ describe the example:
\[
 O=0,\; B_j=(m_j,h_j),\; B'_j=(m_j,-h_j),\; j=1,2,3,
\]
\[
\begin{array}{crrrrrr}
\toprule
j&\multicolumn{3}{c}{m_j}&\multicolumn{3}{c}{h_j}\\\midrule
1&.806238588&0&0&.591590516&0&0\\
2&-.120730325&1.748497240&0&.728537434&1.484244514&0\\
3&-.040254321&-1.580278423&2.591366719&.441900817&-1.560837637&4.631779393\\
\bottomrule
\end{array}
\]
The isometry $(m,h)\mapsto(m,-h)$ fixes $O$ and interchanges $B_j$ and $B'_j$. In this scale the Morley edge is approximately $.119877630$, and the dihedral angles lie between $63.6571^\circ$ and $121.5481^\circ$. If one applies the Morley construction to these nine-digit coordinates, the $21$ Morley edges agree to a relative spread of about $2.3\cdot10^{-11}$. This is only a check; the proof uses the regularity equations.

\begin{proof}
Put $N=7$ and order the vertices $O,B_1,B'_1,B_2,B'_2,B_3,B'_3$. The unknowns are the $21$ numbers $q_{ij}$, $i<j$, in lexicographic order, and $d_1,\dots,d_7$. With $u=Kd$, equation \eqref{sx:regularityeq} consists of the $28$ equations
\begin{equation}\label{hd:system}
 F_{ij}=4\sum_{a=1}^Nd_a^2q_{ia}q_{ja}-\frac4Nu_iu_j-\ph(q_{ij})=0,\; i\le j.
\end{equation}
The permutation $\sigma=(B_1\,B'_1)(B_2\,B'_2)(B_3\,B'_3)$ acts on the unknowns, and the system is invariant under it.

The script \texttt{check\_dim6\_dim7.py} starts from approximations to $20$ decimals, improves them by Newton's method, rounds the result to a point $x_0\in\mathbb Q^{28}$ with $60$ decimals, and replaces $x_0$ by the average of $x_0$ and its image under $\sigma$, so that $x_0$ is invariant under $\sigma$. Let $C$ be a floating point approximate inverse of the Jacobian $DF(x_0)$; its entries are binary fractions, hence rational. Let $\rho=10^{-30}$ and $\mathcal I=x_0+[-\rho,\rho]^{28}$. Exact rational arithmetic, together with outward rounded interval arithmetic for an enclosure $DF(\mathcal I)$ of the Jacobian on $\mathcal I$, gives
\begin{equation}\label{hd:bounds}
 \kappa:=\|I-C\,DF(x_0)\|_\infty+\|C\|_\infty\max_i\sum_jw\bigl(DF(\mathcal I)_{ij}\bigr)<4\cdot10^{-13},\;
 \|CF(x_0)\|_\infty<5\cdot10^{-61},
\end{equation}
where $w$ is the width of an interval. Let $g(x)=x-CF(x)$. For $x,y\in\mathcal I$ we have $g(x)-g(y)=(I-C\bar J)(x-y)$, where $\bar J$ is the mean of $DF$ over the segment from $y$ to $x$; its entries lie in the corresponding intervals of $DF(\mathcal I)$, as do those of $DF(x_0)$. Hence $\|g(x)-g(y)\|_\infty\le\kappa\|x-y\|_\infty$ and $\|g(x)-x_0\|_\infty\le\kappa\rho+\|CF(x_0)\|_\infty<\rho$. So $g$ is a contraction of $\mathcal I$ into itself and has a unique fixed point $x_*$ (see \cite{Krawczyk,Neumaier,Rump} for this kind of argument). The first bound shows that $C\,DF(x_0)$, and therefore $C$, is invertible, so $x_*$ is the unique zero of $F$ in $\mathcal I$. Since $F$, $x_0$ and $\mathcal I$ are invariant under $\sigma$, so is $x_*$.

The script also checks $1<q_{ij}<2$ and $d_i>0$ on $\mathcal I$. Let $K_0=K(x_0)$, $R=K_0^{-1}$ and $S_0=R\,(1/d_{0,i})_i$, computed exactly. On $\mathcal I$ we have $\|K-K_0\|_\infty\le(N-1)\rho$; with $e=(N-1)\rho\|R\|_\infty<1$ and $d_{\min}=\min_id_{0,i}$, the Neumann series gives
\[
 \bigl\|K^{-1}(1/d_i)_i-S_0\bigr\|_\infty\le\frac{\|R\|_\infty}{1-e}\Bigl(\frac{\rho}{d_{\min}(d_{\min}-\rho)}+(N-1)\rho\|S_0\|_\infty\Bigr)<2\cdot10^{-29},
\]
while $\min_i(S_0)_i>0.037$. At $x_*$ we have $H=4KDPDK$ with $P=I-\frac1N\one\one^T$, which is positive semidefinite of rank six with kernel spanned by the positive vector $K^{-1}(1/d_i)_i$. Theorem~\ref{sx:regularity} gives a six-simplex with regular Morley simplex.

Since $x_*$ is invariant under $\sigma$, the permutation $\sigma$ is induced by an isometry (Proposition~\ref{sx:reflectiontest}). For each of the other $5038$ nonidentity permutations $\pi$ of the vertices, the script checks
\[
 \max_{i<j}\bigl|(K_0)_{ij}-(K_0)_{\pi(i)\pi(j)}\bigr|>0.094>2\rho,
\]
so $\pi$ does not preserve $K(x_*)$. The isometry group is therefore $\{\mathrm{id},\sigma\}$. As $\sigma$ interchanges three pairs of vertices, it is not a reflection.
\end{proof}

\section{An example in dimension seven}\label{hd:seven}

The seven dimensional example has an involution interchanging four pairs of vertices. As in dimension six, no individual pair can be interchanged while all other vertices are fixed.

\begin{theorem}\label{hd:seven-counterexample}
There is a seven-simplex with regular Morley simplex whose isometry group has order two and contains no hyperplane reflection. In particular $(G_7)$ is false.
\end{theorem}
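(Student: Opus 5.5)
The plan is to copy the proof of Theorem~\ref{hd:counterexample} with $N=8$. The seven dimensional example has an involution $\sigma$ that interchanges four pairs of vertices. There is no fixed vertex, since $N=8$ is even. So we order the vertices $B_1,B'_1,\dots,B_4,B'_4$ and set $\sigma=(B_1\,B'_1)(B_2\,B'_2)(B_3\,B'_3)(B_4\,B'_4)$.

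The unknowns are the $28$ numbers $q_{ij}$ and $d_1,\dots,d_8$, which gives $36$ unknowns. The system \eqref{hd:system} with $N=8$ consists of the $36$ equations $F_{ij}$, $i\le j$, so it is square, as in dimension six. It is invariant under $\sigma$ acting on the unknowns.

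The existence step proceeds as follows.
\begin{enumerate}[label=(\roman*)]
\item The script \texttt{check\_dim6\_dim7.py} starts from a numerical approximation found by the searches. It refines it by Newton's method and rounds it to a rational point $x_0\in\mathbb Q^{36}$ with about $60$ decimals. It then symmetrizes $x_0$ by averaging it with its $\sigma$-image.
\item We take a floating point approximate inverse $C$ of $DF(x_0)$, which is rational, and the box $\mathcal I=x_0+[-\rho,\rho]^{36}$ with $\rho=10^{-30}$.
\item Exact and outward rounded interval arithmetic checks the analogue of \eqref{hd:bounds}: $\kappa<1$ and $\kappa\rho+\|CF(x_0)\|_\infty<\rho$.
\item The Krawczyk-type contraction argument then gives a unique zero $x_*\in\mathcal I$. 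Since $F$, $x_0$ and $\mathcal I$ are invariant under $\sigma$, the zero $x_*$ is $\sigma$-invariant.
\item The script checks on $\mathcal I$ that $1<q_{ij}<2$ and $d_i>0$.
\item It checks positivity of $K^{-1}(1/d_i)_i$ by the same Neumann series bound around $S_0=K_0^{-1}(1/d_{0,i})_i$.
\item Then $H=4KDPDK$ is positive semidefinite of rank seven with a positive kernel vector, and Theorem~\ref{sx:regularity} produces the seven-simplex with regular Morley simplex.
\end{enumerate}

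For the symmetry group, $\sigma$ preserves $K(x_*)$, so Proposition~\ref{sx:reflectiontest} shows it is induced by an isometry. For every other nonidentity permutation $\pi$ of the eight vertices, the script certifies
\[
\max_{i<j}\bigl|(K_0)_{ij}-(K_0)_{\pi(i)\pi(j)}\bigr|>2\rho+2(N-1)\rho,
\]
so $\pi$ does not preserve $K(x_*)$. There are $8!-2=40318$ such permutations, which is entirely feasible. A cheaper route first compares the sorted multisets of row entries of $K_0$, which restricts $\pi$ to a small set. Hence the isometry group is $\{\mathrm{id},\sigma\}$. Since $\sigma$ moves four disjoint pairs and is not a transposition, Proposition~\ref{sx:reflectiontest} shows it is not a reflection, and $(G_7)$ fails.

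The main obstacle is numerical: we need a starting point close enough to a genuinely nondegenerate solution. First, the Jacobian $DF(x_0)$ must be well conditioned, so that $\kappa$ is small. If the $\sigma$-invariant solution lay on a positive-dimensional solution family, the Jacobian would be singular and the certification would fail. Second, the certified separation gaps of $K_0$ must be comfortably larger than $\rho$. If the full system were singular, I would instead solve the reduced system on the $\sigma$-fixed subspace. That system is generally still square, since equations and unknowns both split into $\sigma$-orbits. I would certify that reduced system and then argue invariance directly.
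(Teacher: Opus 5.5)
Your proposal is correct and matches the paper's proof essentially step for step. It uses the same $\sigma$-invariant contraction certificate for the $36\times36$ system with $\rho=10^{-30}$, the same Neumann-series check that $K^{-1}(1/d_i)_i$ is positive, and the same exclusion of all $8!-2$ other permutations by a separation bound on $K_0$. On that last point, the paper's threshold $2\rho$ already suffices, since each entry of $K$ lies within $\rho$ of $K_0$ on $\mathcal I$; your extra $2(N-1)\rho$ is harmless but not needed.
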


Write the vertices as $B_i=(m_i,h_i)$ and $B'_i=(m_i,-h_i)$, $1\le i\le4$, in $\R^3\oplus\R^4$. The following rounded coordinates have $|B_1B'_1|=1$:
\[
\begin{array}{crrr}
\toprule
i&\multicolumn{3}{c}{m_i}\\
\midrule
1&0&0&0\\
2&2.5492916654&0&0\\
3&.1758897556&.6481344734&0\\
4&-.3141249054&.6780199791&.9695952075\\
\bottomrule
\end{array}
\]
\[
\begin{array}{crrrr}
\toprule
i&\multicolumn{4}{c}{h_i}\\
\midrule
1&.5&0&0&0\\
2&-.2953023803&3.6143846851&0&0\\
3&-.0694138393&-.0179817115&.3525622431&0\\
4&-.2877179405&-.2409904414&-.1940594065&.9025294303\\
\bottomrule
\end{array}
\]
The Morley edge is approximately $.076947895588$, and the dihedral angles lie between $65.9621^\circ$ and $115.2717^\circ$.

\begin{proof}
The proof is the same as for Theorem~\ref{hd:counterexample}. Now $N=8$, the vertices are ordered $B_1,B'_1,\dots,B_4,B'_4$, there are $28$ unknowns $q_{ij}$ and eight unknowns $d_i$, and \eqref{hd:system} consists of $36$ equations. The involution is $\sigma=(B_1\,B'_1)(B_2\,B'_2)(B_3\,B'_3)(B_4\,B'_4)$. With $\rho=10^{-30}$ the script \texttt{check\_dim6\_dim7.py} verifies
\[
\kappa<3\cdot10^{-12},\;\|CF(x_0)\|_\infty<5\cdot10^{-61},
\]
$1<q_{ij}<2$ and $d_i>0$ on $\mathcal I$, $\min_i(S_0)_i>0.029$ with an error below $2\cdot10^{-29}$, and
\[
 \max_{i<j}\bigl|(K_0)_{ij}-(K_0)_{\pi(i)\pi(j)}\bigr|>0.083>2\rho
\]
for every permutation $\pi\notin\{\mathrm{id},\sigma\}$ of the eight vertices. The isometry group is $\{\mathrm{id},\sigma\}$, and $\sigma$, which interchanges four pairs, is not a reflection.
\end{proof}

\section{Dihedral simplices}\label{dh:section}

The examples in dimensions six and seven use an involution interchanging several pairs of vertices. A different construction starts in dimension eight: cyclic symmetry reduces the regularity equations to a system with about half as many unknowns as vertices. We certify a solution for every number of vertices from $9$ to $201$.

Let $N\ge5$, index the vertices by $\Z/N$, put $M=\lfloor N/2\rfloor$, and let $|k|\in\{0,\dots,M\}$ be the cyclic distance of $k\in\Z/N$ from $0$. Given $c_1,\dots,c_M$, put $c_0=1$, $r_k=c_{|k|}$ for $k\in\Z/N$, $\sigma=\sum_kr_k$, and let $K$ be the symmetric circulant matrix with entries $q_{ij}=r_{j-i}$. Consider the $M+1$ equations
\begin{equation}\label{dh:eq}
E_m=4T\sum_{k\in\Z/N}r_kr_{k-m}-\frac{4T}{N}\sigma^2-\ph(c_m)=0,\; m=0,1,\dots,M,
\end{equation}
in the $M+1$ unknowns $c_1,\dots,c_M,T$, and the inequalities
\begin{equation}\label{dh:pos}
h_k=\sum_{j\in\Z/N}\ph(r_j)\cos\frac{2\pi jk}{N}>0,\; k=1,\dots,M.
\end{equation}

\begin{lemma}\label{dh:lemma}
Let $N\ge5$, and let $c_1,\dots,c_M\in(1,2)$ and $T>0$ satisfy \eqref{dh:eq} and \eqref{dh:pos}. Then there is an $(N-1)$-simplex with $q_{ij}=c_{|j-i|}$ whose Morley simplex is regular. The dihedral group $D_N$, generated by $i\mapsto i+1$ and $i\mapsto-i$, acts on it by isometries. If $c_1,\dots,c_M$ are distinct, then its isometry group is exactly $D_N$, of order $2N$, and it has no hyperplane reflection symmetry.
\end{lemma}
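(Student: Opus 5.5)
The plan is to apply the converse part of Theorem~\ref{sx:regularity} with $d=\sqrt T\,\one$ and with $S=\one$ as kernel vector.

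\textbf{Reduction to the regularity equation.} Put $K=(r_{j-i})$, which is symmetric circulant with unit diagonal and off-diagonal entries in $(1,2)$. With $D=\sqrt T\,I$ and $d=\sqrt T\,\one$, the right side of \eqref{sx:regularityeq} is $4TK(I-\frac1N\one\one^T)K=4TK^2-\frac{4T}{N}(K\one)(K\one)^T$. Since $K\one=\sigma\one$, its $(i,j)$ entry is $4T\sum_k r_{k-i}r_{k-j}-\frac{4T}{N}\sigma^2$. After reindexing and using $r_{-k}=r_k$, this equals the first two terms of $E_m$ with $m=|j-i|$. Hence \eqref{dh:eq} says exactly that $H:=\ph[K]$ equals $4TKPK$ entrywise.

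\textbf{Verifying \eqref{sx:kernel}.} By the identity just obtained, $H=4TKPK$, which is positive semidefinite. Since $K\one=\sigma\one$ and $P\one=0$, we get $H\one=0$. Rank $n=N-1$ requires $K$ to be invertible on $\one^\perp$. I would not appeal to Lemma~\ref{sx:inertia}, because that lemma presupposes a simplex. Instead I use the Fourier diagonalization: $H$ is circulant with eigenvalue $h_k$ on the $k$-th Fourier mode, and \eqref{dh:pos} makes all of these positive for $k=1,\dots,M$, which covers every nonconstant mode since $h_{N-k}=h_k$. Thus $H\succeq0$, $\ker H=\R\one$, $\rank H=N-1$, and $S=\one>0$ is the kernel vector. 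The converse part of Theorem~\ref{sx:regularity} then produces an $(N-1)$-simplex with these $q_{ij}$ and a regular Morley simplex. I expect this step to be the main point of care: the converse hypotheses must be matched exactly, and \eqref{dh:pos} is precisely what supplies positive definiteness on $\one^\perp$.

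\textbf{Symmetry.} The rotation $i\mapsto i+1$ and the reflection of indices $i\mapsto -i$ both preserve $K$, because $q_{ij}$ depends only on $|j-i|$. By Proposition~\ref{sx:reflectiontest}, $D_N$ therefore acts by isometries. Now suppose the $c_m$ are distinct, and let $\pi$ be an isometry. Then $\pi$ preserves $K$, and in particular the graph of pairs with $q_{ij}=c_1$, which is the $N$-cycle. Any automorphism of an $N$-cycle lies in $D_N$, so the isometry group is exactly $D_N$, of order $2N$ (these elements are distinct since $N\ge5$).

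\textbf{No hyperplane reflection.} By Proposition~\ref{sx:reflectiontest}, a hyperplane reflection induces a transposition. I will show no element of $D_N$ is a transposition when $N\ge5$. A rotation $i\mapsto i+s$ with $s\ne0$ has no fixed points. A reflection $i\mapsto s-i$ fixes at most two vertices, so it moves at least $N-2\ge3$ vertices. A transposition moves exactly two, so neither kind qualifies.
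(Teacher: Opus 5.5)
Your proposal is correct and follows the paper's proof essentially step by step. You identify \eqref{dh:eq} entrywise with \eqref{sx:regularityeq} for $d=\sqrt T\,\one$, use the Fourier eigenvalues $h_k$ to get $\ker H=\R\one$, apply the converse of Theorem~\ref{sx:regularity}, and then use Proposition~\ref{sx:reflectiontest} together with the automorphisms of the $N$-cycle and the fixed-point count for elements of $D_N$. Your explicit remark that $h_{N-k}=h_k$ covers all nonconstant modes is a small detail the paper leaves implicit.
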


\begin{proof}
The matrices $H=\ph[K]$ and $4TK(I-\frac1N\one\one^T)K$ are symmetric circulants. Since $K\one=\sigma\one$, the entry of the second matrix in position $(0,m)$ is $4T\sum_kr_kr_{k-m}-4T\sigma^2/N$, so \eqref{dh:eq} says that the two matrices are equal. This is \eqref{sx:regularityeq} with $d=\sqrt T\,\one$. In particular $H\one=4T(\sigma^2-\sigma^2)\one=0$. The vectors $(\cos(2\pi jk/N))_j$ and $(\sin(2\pi jk/N))_j$ are eigenvectors of $H$ with eigenvalue $h_k$, so by \eqref{dh:pos} $H$ is positive semidefinite of rank $N-1$, and its kernel is spanned by $\one$. Theorem~\ref{sx:regularity} gives the simplex.

The elements of $D_N$ preserve cyclic distances, hence preserve $K$, and by Proposition~\ref{sx:reflectiontest} they are induced by isometries. If the $c_m$ are distinct, a permutation that preserves $K$ preserves cyclic distances, in particular the pairs at distance one, so it is an automorphism of the $N$-cycle and lies in $D_N$. Finally, no element of $D_N$ acts on the vertices as a transposition when $N\ge5$: a nontrivial rotation fixes no vertex, and a reflection $i\mapsto s-i$ fixes at most two vertices and so interchanges at least two pairs. By Proposition~\ref{sx:reflectiontest}, $\Delta$ has no reflection symmetry.
\end{proof}

\begin{remark}
Let $\kappa_k=\sum_jr_j\cos(2\pi jk/N)$ be the eigenvalues of $K$. Since $H=4TK(I-\frac1N\one\one^T)K$, we have $h_k=4T\kappa_k^2$ for $k\ge1$. Under \eqref{dh:eq}, condition \eqref{dh:pos} is therefore equivalent to invertibility of $K$, since its constant eigenvalue $\sigma$ is positive. For $N=5$ the system is the family of Theorem~\ref{ld:d5}, with $c_1=p$ and $c_2=r$, and only the regular simplex occurs. We make no existence or nonexistence assertion here for $N=6,7,8$. The solution for $N=9$ in Table~\ref{dh:table} starts the certified family below.
\end{remark}

\begin{theorem}\label{dh:main}
For every $N$ with $9\le N\le201$, the system \eqref{dh:eq} has a solution with
\[
1<c_1<c_2<\dots<c_M<2,\; T>0,
\]
which satisfies \eqref{dh:pos}. Consequently, for every $n$ with $8\le n\le200$ there is an $n$-simplex whose Morley simplex is regular, whose isometry group is the dihedral group of order $2(n+1)$, and which has no hyperplane reflection symmetry.
\end{theorem}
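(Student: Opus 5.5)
The second sentence of the theorem follows from the first through Lemma~\ref{dh:lemma}. Once we have $c_1,\dots,c_M\in(1,2)$, strictly increasing and hence distinct, together with $T>0$ satisfying \eqref{dh:eq} and \eqref{dh:pos}, the lemma gives an $(N-1)$-simplex with regular Morley simplex, isometry group exactly $D_N$ of order $2N=2(n+1)$, and no reflection symmetry. So the real work is a computer assisted existence proof, for each $N$ separately, of a solution of the square system of $M+1$ equations \eqref{dh:eq} in the $M+1$ unknowns $(c_1,\dots,c_M,T)$. The certification would follow the Krawczyk-type contraction argument already used for Theorems~\ref{hd:counterexample} and~\ref{hd:seven-counterexample}.

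For each $N$ the steps are as follows.
\begin{enumerate}[label=(\roman*)]
\item Find a floating point approximate solution. I would continue numerically in $N$ starting from the $N=9$ solution, use Newton's method, and predict from the previous one or two values of $N$. Since the $c_m$ are indexed by $m\le M$, this requires interpolating the profile $m/M\mapsto c_m$.
\item Round the approximation to a rational point $x_0$ and take a rational approximate inverse $C$ of the Jacobian $DF(x_0)$.
\item On a box $\mathcal I=x_0+[-\rho,\rho]^{M+1}$, verify with exact rational arithmetic and outward rounded intervals that
\[
\kappa=\|I-C\,DF(x_0)\|_\infty+\|C\|_\infty\max_i\textstyle\sum_j w\bigl(DF(\mathcal I)_{ij}\bigr)<1
\]
and $\kappa\rho+\|CF(x_0)\|_\infty<\rho$. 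Then $g(x)=x-CF(x)$ is a contraction of $\mathcal I$ into itself, so $F$ has a unique zero $x_*$ in $\mathcal I$.
\item On $\mathcal I$, check the strict chain $1<c_1<\dots<c_M<2$ and $T>0$. It is enough to check these at $x_0$ with margins larger than $2\rho$.
\item Verify \eqref{dh:pos}. By the remark after Lemma~\ref{dh:lemma}, under \eqref{dh:eq} this reduces to $\kappa_k=\sum_j r_j\cos(2\pi jk/N)\ne0$ for $1\le k\le M$, because then $h_k=4T\kappa_k^2>0$. This is cheaper and more robust than enclosing $h_k$ directly. The cosines are enclosed by rational interval evaluation; alternatively the sign of $\kappa_k$ can be certified at $x_0$ with a margin exceeding $N\rho$.
\end{enumerate}

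The Jacobian is explicit. We have $\partial E_m/\partial T=E_m+\ph(c_m)$ divided by $T$, the derivatives with respect to $c_l$ involve only sums of the $r$'s, and $\ph'(c)=\tfrac32(1-c^2)$. Everything except the cosines in step (v) is polynomial, so the interval work stays elementary.

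I expect the main obstacle to be numerical rather than conceptual. The first issue is obtaining a good starting point uniformly up to $N=201$: the family must not turn back or lose the monotonicity of the $c_m$. The second is controlling the conditioning of $DF$ as $M$ grows. If the $c_m$ cluster near $\sqrt3$, the eigenvalues $\kappa_k$ for large $k$ may become small, and both the contraction constant and the margin in \eqref{dh:pos} shrink. To handle this I would run the continuation in high precision, about 50 digits using Python \texttt{fractions} and \texttt{decimal}, and choose $\rho$ adaptively per $N$, for instance $\rho=10^{-30}$ as in dimension six. Finally, I would record for each $N$ the attained values of $\kappa$, $\min_{m}(c_{m+1}-c_m)$ and $\min_k|\kappa_k|$, to document the margins.
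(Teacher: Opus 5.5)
Your proposal is correct and follows the paper's proof. You reduce to Lemma~\ref{dh:lemma}, then for each $N$ certify a zero of \eqref{dh:eq} with the contraction test of \eqref{hd:bounds} on a box of radius $\rho=10^{-30}$ around a high-precision rational point obtained by continuation in $N$, and check $1<c_1<\dots<c_M<2$, $T>0$ and \eqref{dh:pos} on that box.

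The one minor difference is in step (v): you certify $\kappa_k\ne0$ on the box and deduce $h_k=4T\kappa_k^2>0$ at the exact zero via the remark after Lemma~\ref{dh:lemma}, whereas the paper encloses $h_k$ directly, using Machin's formula for $\pi$ and Taylor bounds for the cosines. Both are valid; your variant needs margins of order $|\kappa_k|$ rather than $\kappa_k^2$.
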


\begin{proof}
The second statement follows from the first by Lemma~\ref{dh:lemma}. The first is checked by the script \texttt{check\_circulant.py}, separately for each $N$, by the method used for Theorem~\ref{hd:counterexample}. For $N=9$ the script starts from the rounded values in Table~\ref{dh:table}; for $N>9$ it starts from the solution for $N-1$, interpolated linearly at the points $m/N$, with the value $c_0=1$ at $0$. Newton's method in floating point, followed by a few simplified Newton steps in $90$-digit decimal arithmetic, gives an approximate solution, which is rounded to a point $x_0\in\mathbb Q^{M+1}$ with $60$ decimals. With a floating point approximate inverse $C$ of the Jacobian at $x_0$ and $\rho=10^{-30}$, the script verifies on $\mathcal I=x_0+[-\rho,\rho]^{M+1}$ that
\begin{equation}\label{dh:bounds}
\kappa<5\cdot10^{-11},\;\|CE(x_0)\|_\infty<10^{-60},
\end{equation}
with $\kappa$ defined as in \eqref{hd:bounds}. As in the proof of Theorem~\ref{hd:counterexample}, this gives a unique zero of $E$ in $\mathcal I$. On $\mathcal I$ the script then checks $1<c_1$, $c_M<2$, $T>0$, that the interval of $c_m$ lies below the interval of $c_{m+1}$ for every $m<M$, and that $h_k>0$ for $k=1,\dots,M$. For the last check, $\pi$ is enclosed using Machin's formula and the cosines by Taylor polynomials with an explicit bound for the remainder. All comparisons are exact.
\end{proof}

\begin{corollary}\label{dh:cor}
Conjecture~\ref{hd:all-dimensions} holds for $6\le n\le200$.
\end{corollary}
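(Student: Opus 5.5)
The corollary is an assembly of results already proved, so the plan is to cover the range $6\le n\le200$ by cases according to the dimension and check in each case that the example actually provided satisfies the two requirements of Conjecture~\ref{hd:all-dimensions}. These requirements are that the Morley simplex is regular and that no hyperplane reflection maps the simplex to itself. This is stronger than the failure of $(G_n)$, so the proof must quote the absence of every reflection in each case, not just the absence of a pair of reflections on disjoint pairs.

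For $n=6$ I will invoke Theorem~\ref{hd:counterexample}. It gives a six-simplex with regular Morley simplex whose isometry group is $\{\mathrm{id},\sigma\}$, and $\sigma$ interchanges three pairs of vertices. By Proposition~\ref{sx:reflectiontest}, a reflection of $\Delta$ induces a transposition of the vertices, so $\sigma$ is not a reflection and there is none. For $n=7$ the same argument applies with Theorem~\ref{hd:seven-counterexample}, where $\sigma$ interchanges four pairs. For $8\le n\le200$ I put $N=n+1\in[9,201]$ and apply Theorem~\ref{dh:main}, whose second statement already yields a simplex with regular Morley simplex and no hyperplane reflection symmetry. Behind that statement is Lemma~\ref{dh:lemma}: the certified values $c_1<\dots<c_M$ are distinct, so the isometry group is exactly $D_N$. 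For $N\ge5$ no element of $D_N$ acts as a transposition on the vertices, and Proposition~\ref{sx:reflectiontest} then excludes reflections.

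There is no real obstacle; the only point needing care is that the three ranges $\{6\}$, $\{7\}$ and $[8,200]$ together exhaust $6\le n\le200$. The substantive content, namely the interval-arithmetic certification of the solutions and the positivity conditions feeding Theorem~\ref{sx:regularity}, has already been discharged in the cited theorems.
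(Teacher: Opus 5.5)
Your proposal is correct and matches the paper's proof, which simply combines Theorems~\ref{hd:counterexample}, \ref{hd:seven-counterexample} and~\ref{dh:main} to cover $n=6$, $n=7$ and $8\le n\le200$. Your extra justification of the absence of reflections, via Proposition~\ref{sx:reflectiontest} and Lemma~\ref{dh:lemma}, only unpacks what those theorems already state.
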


\begin{proof}
Combine Theorems~\ref{hd:counterexample}, \ref{hd:seven-counterexample} and~\ref{dh:main}.
\end{proof}

\begin{table}[ht]
\centering\small
\begin{tabular}{rlllll}\toprule
$N$&$c_1$&$c_M$&$T$&$\min_m(c_{m+1}-c_m)$&$\min_kh_k$\\\midrule
$9$&$1.5535062473$&$1.8993168010$&$0.3841947084$&$5.39\cdot10^{-2}$&$3.10\cdot10^{-1}$\\
$10$&$1.483843$&$1.925721$&$0.338986$&$2.52\cdot10^{-2}$&$1.78\cdot10^{-1}$\\
$20$&$1.236747$&$1.956117$&$0.164637$&$7.66\cdot10^{-3}$&$1.34\cdot10^{-2}$\\
$50$&$1.095864$&$1.960271$&$0.065478$&$1.25\cdot10^{-3}$&$6.98\cdot10^{-4}$\\
$100$&$1.048246$&$1.960701$&$0.032716$&$3.14\cdot10^{-4}$&$8.20\cdot10^{-5}$\\
$150$&$1.032242$&$1.960773$&$0.021808$&$1.40\cdot10^{-4}$&$2.38\cdot10^{-5}$\\
$200$&$1.024212$&$1.960797$&$0.016356$&$7.86\cdot10^{-5}$&$9.95\cdot10^{-6}$\\
$201$&$1.024092$&$1.960778$&$0.016274$&$1.56\cdot10^{-4}$&$9.80\cdot10^{-6}$\\\bottomrule
\end{tabular}
\caption{Rounded values for some members of the certified dihedral family. Every case satisfies the common bounds \eqref{dh:bounds}. For $N=9$ the other values are $c_2\approx1.7336022961$ and $c_3\approx1.8454039853$.}\label{dh:table}
\end{table}

For $N=9$ the four dihedral angles are approximately $117.107^\circ$, $89.733^\circ$, $68.027^\circ$ and $54.772^\circ$ for cyclic distances $1,2,3,4$. The computed solutions for $9\le N\le201$ suggest that the points $(m/N,c_m)$ approach an increasing profile. At the largest values of $N$, the quantities $N(c_1-1)$ and $NT$ are about $4.84$ and $3.27$, while $c_M$ is close to $1.961$. The minimum gap appears to have order $N^{-2}$, with a parity effect: $N^2\min_m(c_{m+1}-c_m)$ is about $3.14$ for $N=200$ and $6.29$ for $N=201$. The quantity $N^3\min_kh_k$ is close to $80$ in these cases. These are observations on the certified finite range, not asymptotic statements.

If a limiting profile exists, a natural normalization is $c_m\approx\gamma(m/N)$, where $\gamma$ is continuous, even and periodic with period one, and $\gamma(0)=1$. If also $NT\to\tau$, replacing the sums in \eqref{dh:eq} by integrals leads formally to
\[
\begin{gathered}
4\tau\Bigl(\int_0^1\gamma(y)\gamma(y-x)\,dy-\Bigl(\int_0^1\gamma(y)\,dy\Bigr)^2\Bigr)=\ph(\gamma(x)),\\
0\le x<1.
\end{gathered}
\]
At $x=0$ this says that $4\tau$ times the variance of $\gamma$ equals one. Existence of such a profile and convergence of the discrete solutions remain open. The computations suggest the following conjecture.

\begin{conjecture}\label{dh:conj}
For every $N\ge9$ the system \eqref{dh:eq} has a solution with $1<c_1<c_2<\dots<c_M<2$ and $T>0$ which satisfies \eqref{dh:pos}.
\end{conjecture}

By Lemma~\ref{dh:lemma} and Theorems~\ref{hd:counterexample} and~\ref{hd:seven-counterexample}, Conjecture~\ref{dh:conj} implies Conjecture~\ref{hd:all-dimensions}.

\section{The Johnson family}\label{jn:section}

This family uses $N=\binom k2$ vertices, so the dimension is $n=\binom k2-1$; for $k\ge9$ these are the dimensions $35,44,54,65,77,\dots$ Let $k\ge5$ and index the vertices by the two element subsets of $\{1,\dots,k\}$. We look for a matrix $K$ whose off-diagonal entry is $p$ if the two labels intersect and $r$ if they are disjoint; this is the pattern of the Johnson scheme $J(k,2)$.

\begin{lemma}\label{jn:reduction}
Such data give a simplex with regular Morley simplex provided $1<p,r<2$ and
\begin{align}
 h_0&=1+2(k-2)\ph(p)+\binom{k-2}{2}\ph(r)=0,\label{jn:h0}\\
 h_1&=1+(k-4)\ph(p)-(k-3)\ph(r)>0,\notag\\
 h_2&=1-2\ph(p)+\ph(r)>0,\notag\\
 h_1\kappa_2^2-h_2\kappa_1^2&=0,\; \kappa_1\kappa_2\ne0,\label{jn:equal}
\end{align}
where
\[
 \kappa_1=1+(k-4)p-(k-3)r,\; \kappa_2=1-2p+r.
\]
If $p\ne r$, the simplex has no hyperplane reflection symmetry.
\end{lemma}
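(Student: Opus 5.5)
The plan mirrors the proof of Lemma~\ref{dh:lemma}, with the circulant Fourier basis replaced by the eigenspaces of the Johnson scheme $J(k,2)$. Write $K=I+pA_1+rA_2$ and $H=\ph[K]=I+\ph(p)A_1+\ph(r)A_2$. Here $A_1$ is the adjacency matrix of the triangular graph, whose pairs intersect in one element, and $A_2=J-I-A_1$ is that of its complement. Both matrices lie in the Bose--Mesner algebra, which is commutative. Its three common eigenspaces are $V_0=\R\one$, $V_1$ of dimension $k-1$ and $V_2$ of dimension $k(k-3)/2$. On these spaces $A_1$ has the eigenvalues $2(k-2),\,k-4,\,-2$, and $A_2$ has $\binom{k-2}2,\,-(k-3),\,1$. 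I will quote these standard values, or check them quickly on explicit vectors: on $V_1$ use vectors $x_{\{a,b\}}=y_a+y_b$ with $\sum y=0$, and on $V_2$ use the orthogonal complement. This gives eigenvalues $h_0,h_1,h_2$ of $H$ and $\kappa_0,\kappa_1,\kappa_2$ of $K$, with $\kappa_1,\kappa_2$ exactly as in the statement. Also $\kappa_0=1+2(k-2)p+\binom{k-2}2r>0$.

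Next I take $d=\sqrt T\,\one$ with a constant $T>0$ still to be chosen. Since $K$ commutes with $P=I-\frac1N\one\one^T$, the right side of \eqref{sx:regularityeq} becomes $4TK^2P$. Its eigenvalues are $0$ on $V_0$ and $4T\kappa_j^2$ on $V_j$ for $j=1,2$. Both sides of \eqref{sx:regularityeq} lie in the same commutative algebra, so equality holds if and only if the eigenvalues agree on each $V_j$. That means $h_0=0$, which is \eqref{jn:h0}, together with $h_1=4T\kappa_1^2$ and $h_2=4T\kappa_2^2$. Since $\kappa_1\kappa_2\ne0$, I define $T=h_1/(4\kappa_1^2)>0$. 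Then \eqref{jn:equal} gives $h_2=h_1\kappa_2^2/\kappa_1^2=4T\kappa_2^2$.

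Now $H$ is positive semidefinite of rank $N-1=n$. This holds because $h_1,h_2>0$ and $h_0=0$, so the kernel is exactly $V_0=\R\one$ and the positive vector $S=\one$ spans it. Thus \eqref{sx:kernel} holds, and the converse part of Theorem~\ref{sx:regularity}, using $1<p,r<2$, gives a simplex whose Morley simplex is regular. I expect no real obstacle here beyond correctly recording the eigenvalues of the scheme.

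For the symmetry claim, suppose $p\ne r$. By Proposition~\ref{sx:reflectiontest}, a reflection would be a transposition of two vertices that preserves $K$. Say it swaps the vertices $\{a,b\}$ and $\{c,d\}$, which are distinct pairs. I then need a third vertex $\{x,y\}$ whose intersection pattern differs between the two. If the pairs share an element, say $b=c$, take $\{a,e\}$ with $e\notin\{a,b,d\}$. It meets $\{a,b\}$ but is disjoint from $\{b,d\}$. If the pairs are disjoint, take $\{a,e\}$ with $e\notin\{a,b,c,d\}$, which is possible since $k\ge5$. The chosen vertex is distinct from both swapped ones, so it is fixed by the transposition. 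Hence $q$ would have to equal both $p$ and $r$, a contradiction.
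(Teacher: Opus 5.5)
Your proposal is correct and follows essentially the same route as the paper: simultaneous diagonalization of $H$ and $K$ on the three eigenspaces of $J(k,2)$, a constant $d$ with $4T=h_1/\kappa_1^2=h_2/\kappa_2^2$, the converse part of Theorem~\ref{sx:regularity} with kernel $\R\one$, and the same choice of a third label distinguishing the two swapped ones. The only difference is cosmetic: the paper obtains the eigenvalues of the intersection matrix $A$ from the identities $I_c^TI_c=2I+A$ and $I_cI_c^T=(k-2)I+\one\one^T$ for the point--pair incidence matrix $I_c$, instead of quoting them or checking them on explicit vectors.
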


\begin{proof}
Let $A$ be the adjacency matrix of intersecting pairs and $T$ the $k\times N$ incidence matrix of points and pairs. Then $T^TT=2I+A$ and $TT^T=(k-2)I+\one\one^T$, so $A$ has the eigenvalues $2(k-2)$, $k-4$, $-2$, with multiplicities $1$, $k-1$, $k(k-3)/2$. The matrix of disjoint pairs is $J-I-A$, where $J=\one\one^T$. Hence the eigenvalues of $H$ are $h_0,h_1,h_2$, and those of $K$ on the nonconstant eigenspaces are $\kappa_1,\kappa_2$. The conditions give $\ker H=\R\one$ and a common positive value $h_1/\kappa_1^2=h_2/\kappa_2^2=4\tau^2$, so \eqref{sx:regularityeq} holds with $d=\tau\one$, and Theorem~\ref{sx:regularity} gives the simplex.

For two distinct labels $X,Y$ there is a third label $Z$ that meets one of them but not the other. If $X,Y$ share an element, take one of the unshared elements together with an element outside $X\cup Y$; if they are disjoint, take an element of $X$ and an element outside $X\cup Y$. So when $p\ne r$ no transposition of two labels preserves $K$, and Proposition~\ref{sx:reflectiontest} excludes every reflection.
\end{proof}

For $p\ne r$ the full symmetry group is $S_k$: the maximal cliques of size $k-1>3$ in the intersection graph are the stars of pairs through a fixed element, so every automorphism permutes the $k$ stars and is induced by an element of $S_k$.

\begin{theorem}\label{jn:existence}
For every $k\ge9$ there is a simplex of dimension $\binom k2-1$ whose Morley simplex is regular and which has no hyperplane reflection symmetry. Consequently $(G_n)$ is false for $n=35,44,54,65,77,\dots$
\end{theorem}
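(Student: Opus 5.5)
The plan is to apply Lemma~\ref{jn:reduction} directly: for each $k\ge9$, produce $p\ne r$ in $(1,2)$ satisfying \eqref{jn:h0}, $h_1>0$, $h_2>0$ and \eqref{jn:equal}. Put $a=\ph(p)$ and $b=\ph(r)$. Since $\ph$ is a decreasing bijection from $(1,2)$ onto $(-1,1)$, I will use $a$ as the parameter. Equation \eqref{jn:h0} is linear in $b$ and gives $b=b(a)=-\bigl(1+2(k-2)a\bigr)/\binom{k-2}2$. Then $p=\ph^{-1}(a)$ and $r=\ph^{-1}(b(a))$ are determined on the interval $J_k$ of $a$ where $b(a)\in(-1,1)$, and \eqref{jn:equal} becomes the single scalar equation $g(a)=h_1\kappa_2^2-h_2\kappa_1^2=0$. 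The regular simplex $a=b=-1/n$ is always a root of $g$, because there $\kappa_1=\kappa_2=1-p$ and $h_1=h_2=1-a$. So I need a second root $a_*\ne-1/n$.

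Let $I\subset J_k$ be the open interval on which $h_1>0$ and $h_2>0$. At an endpoint of $I$ where $h_2=0$ we have $g=h_1\kappa_2^2\ge0$, and where $h_1=0$ we have $g=-h_2\kappa_1^2\le0$. Any root $a_*$ in $I$ automatically has $\kappa_1\kappa_2\ne0$: if, say, $\kappa_2=0$ then $h_2\kappa_1^2=0$, and since $\kappa_1=\kappa_2=0$ is impossible when $K$ has the positive eigenvalue $\sigma$ and the simplex is nondegenerate, this forces $h_2=0$, which contradicts $a_*\in I$. Also $a_*\ne-1/n$ gives $p\ne r$, so the reflection statement of Lemma~\ref{jn:reduction} applies.

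It therefore suffices to find two points of $I$ on the same side of $-1/n$ where $g$ has opposite signs. The first sign comes from a point just beyond the regular root. The Johnson perturbation with $h_0$ fixed has zero row sums, so it lies in $W_2$ of Theorem~\ref{lin:main}. Hence $-1/n$ should be a simple root of $g$, and the sign of $g'(-1/n)$ can be computed in closed form as a function of $k$. The second sign comes from the endpoint of $I$ on the chosen side (or the endpoint of $J_k$ if that comes first). I will choose the side, presumably the one with $p<r$ judging from the dihedral examples, on which the endpoint sign of $g$ is opposite to the sign of $g$ just beyond $-1/n$. The intermediate value theorem then gives $a_*$.

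The main obstacle is making these sign statements hold uniformly for all $k\ge9$. This fails for small $k$, which is presumably why the theorem starts at $9$, and the relevant boundary may be $r\to1$ or $r\to2$ rather than $h_1=0$ or $h_2=0$. I would first rescale by $a=\alpha/k$, so that $b\approx-4\alpha/k^2$, and expand $g$ in powers of $1/k$, aiming for an explicit test point $\alpha_0$, for instance near the value suggested by numerics, at which the sign of $g$ is certified. For $k\ge k_0$ I would use asymptotic bounds with explicit remainders. For $9\le k<k_0$ I would use rational interval arithmetic, treating $1/k$ as an interval variable so that finitely many boxes cover the range, with $\ph^{-1}$ enclosed by bisection as in Theorem~\ref{ld:d5}. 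Finally I would verify, on the same boxes, that $p$ and $r$ stay in $(1,2)$ along the segment between the two test points.
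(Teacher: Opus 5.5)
There is a genuine gap in the sign argument. It is structural, not a matter of uniformity in $k$.

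Once \eqref{jn:h0} is imposed, $h_1$ and $h_2$ are linear in $a=\ph(p)$:
\[
h_1=k\Bigl(\frac1{k-2}+a\Bigr),\qquad h_2=\frac{k-1}{k-3}\Bigl(\frac{k-4}{k-2}-2a\Bigr).
\]
So $I=\bigl(-\frac1{k-2},\frac{k-4}{2(k-2)}\bigr)$. At its two ends $b=1/\binom{k-2}2$ and $b=-2/(k-2)$, so no endpoint of $J_k$ comes first. At the left end $g=-h_2\kappa_1^2\le0$, and at the right end $g=h_1\kappa_2^2\ge0$.

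Now carry out the derivative computation you propose, using $dp/da=1/\ph'(q_n)$, $db/da=-4/(k-3)$ and $(q_n-1)^2(q_n+2)=2N/n$. It gives
\[
g'(-1/n)=\frac{2n(q_n-1)^2\lambda_2}{k-3}>0,
\]
with $\lambda_2$ as in Theorem~\ref{lin:main}. Hence $g$ is negative just to the left of the regular root and also at the left end of $I$. It is positive just to the right and also at the right end. On neither side do the two signs you intend to compare differ. Any nonregular root on a side must be accompanied by a second sign change, and the step ``the intermediate value theorem then gives $a_*$'' has nothing to act on.

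The missing ingredient is an interior point of $I$ where $g$ has the opposite sign; locating it is the real content of the proof. The paper finds it far from the regular simplex, near the end $h_2=0$. It sets $\epsilon=1/k$, $p=2\cos(2\pi/9)+s\epsilon$ (so $\ph(p)\to\frac12$) and $r=\sqrt3+\epsilon\Delta$. It then rescales to $A=\epsilon h_1$, $B=h_2/\epsilon$, $C_1=\epsilon\kappa_1$, $C_2=\kappa_2$, so that $\widetilde G=AC_2^2-BC_1^2=\epsilon g$ extends continuously to $\epsilon=0$. Interval arithmetic certifies $\widetilde G(7/10,\epsilon)>0>\widetilde G(23/20,\epsilon)$ uniformly for $0\le\epsilon\le1/16$. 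The cases $9\le k\le15$ are done by direct interval checks in $p$.

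Your scaling $a=\alpha/k$ with $\alpha$ bounded cannot see that root, which sits at $a\approx\frac12-1.7/k$. It does, however, see a different sign change. With $\alpha$ fixed, a short expansion gives
\[
g\longrightarrow\frac{\alpha}{9}\bigl(42-24\sqrt3-\alpha\bigr)\qquad(k\to\infty),
\]
consistent with the slope above. So for large $k$, $g<0$ at, say, $\alpha=1$. Using such a certified negative point in place of the endpoint of $I$ would repair your argument. It would produce a different, nearly regular branch with $p\approx\sqrt3-(14-8\sqrt3)/k$ and $r=\sqrt3+O(k^{-2})$. You would still need explicit remainder bounds for large $k$ and a certification for every $k$ from $9$ up to your threshold.
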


\begin{proof}
Put $\epsilon=1/k$, $p_*=2\cos(2\pi/9)$, so that $p_*^3-3p_*+1=0$, and $w=\sqrt3$. Set $p=p_*+s\epsilon$. Equation \eqref{jn:h0} can be written as
\begin{equation}\label{jn:psi}
 \ph(r)=\epsilon\Psi,\;
 \Psi=-\frac{2\{\epsilon+2(1-2\epsilon)\ph(p)\}}{(1-2\epsilon)(1-3\epsilon)}.
\end{equation}
Write $r=w+\epsilon\Delta$. Then $\Delta$ satisfies
\begin{equation}\label{jn:delta}
 -3\Delta-\frac{3w}{2}\epsilon\Delta^2-\frac12\epsilon^2\Delta^3=\Psi.
\end{equation}
The following expressions extend continuously to $\epsilon=0$:
\begin{align*}
 A&=\epsilon h_1=\epsilon+(1-4\epsilon)\ph(p)-(1-3\epsilon)\epsilon\Psi,\\
 B&=h_2/\epsilon=(3p_*^2-3)s+3p_*s^2\epsilon+s^3\epsilon^2+\Psi,\\
 C_1&=\epsilon\kappa_1=p-r+\epsilon(1-4p+3r),\; C_2=\kappa_2=1-2p+r,\\
 \widetilde G&=AC_2^2-BC_1^2.
\end{align*}
For $\epsilon>0$, equation \eqref{jn:equal} is equivalent to $\widetilde G=0$. On the rectangle $7/10\le s\le23/20$, $0\le\epsilon\le1/16$, rational interval arithmetic gives
\begin{equation}\label{jn:signs}
 1<p<r<2,\; A,B>0,\; C_1,C_2<0,\;
 \widetilde G(7/10,\epsilon)>0>\widetilde G(23/20,\epsilon);
\end{equation}
the inequalities for $p$ and $r$ also hold at $\epsilon=0$. Here $p_*$ is enclosed in $(3/2,8/5)$ and $w$ in $(17/10,18/10)$ by $85$ bisection steps. The positive root $\Delta$ of \eqref{jn:delta} is enclosed by bisection in $[0,2]$, using that it increases with $-\Psi$ and decreases with $w$ and $\epsilon$. Adaptive subdivision gives the following result:
\begin{center}\small
\begin{tabular}{lrr}\toprule
Inequalities&Rectangles&Lower bound\\\midrule
$p-1,\ 2-r,\ r-p,\ A,\ B,\ -C_1,\ -C_2>0$&3&$.0415$\\
$\widetilde G(7/10,\epsilon)>0$&8&$.00935$\\
$-\widetilde G(23/20,\epsilon)>0$&12&$.000452$\\\bottomrule
\end{tabular}
\end{center}
For each $k\ge16$, the intermediate value theorem in $s$ gives a root of \eqref{jn:equal} with the properties required by Lemma~\ref{jn:reduction}.

The seven values $9\le k\le15$ are checked directly. For each interval of $p$ in the table below, let $r\in(1,2)$ be the solution of \eqref{jn:h0}. Interval arithmetic gives $h_1,h_2>0$, $\kappa_1,\kappa_2<0$ and $1<p<r<2$ on the whole interval, and $h_1\kappa_2^2-h_2\kappa_1^2$ is positive at the left end and negative at the right end.
\begin{center}\small
\begin{tabular}{cccccccc}\toprule
$k$&$9$&$10$&$11$&$12$&$13$&$14$&$15$\\\midrule
left end of $p$&$1.6676$&$1.6456$&$1.6312$&$1.6200$&$1.6112$&$1.6044$&$1.5984$\\
right end of $p$&$1.6680$&$1.6460$&$1.6316$&$1.6204$&$1.6116$&$1.6048$&$1.5988$\\\bottomrule
\end{tabular}
\end{center}
The intermediate value theorem and Lemma~\ref{jn:reduction} complete the proof.
\end{proof}

\begin{proposition}[An asymptotic branch]\label{jn:asymptotic}
For all sufficiently large $k$ there is a Johnson solution with
\[
 p=p_*+\frac{c}{k}+O(k^{-2}),\; r=\sqrt3+\frac{2}{3k}+O(k^{-2}),
\]
where
\[
 c=\frac{2+(1-2p_*+\sqrt3)^2/[2(p_*-\sqrt3)^2]}{3p_*^2-3}\approx0.836089.
\]
For this branch the dihedral angles between facets with intersecting labels tend to $120^\circ$, those between facets with disjoint labels tend to $90^\circ$, and
\[
 kh_2\longrightarrow\frac{(1-2p_*+\sqrt3)^2}{2(p_*-\sqrt3)^2}\approx1.379.
\]
\end{proposition}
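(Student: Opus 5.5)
The plan is to redo the proof of Theorem~\ref{jn:existence} with the implicit function theorem at $\epsilon=0$ in place of the interval verification. I keep its notation: $\epsilon=1/k$, $p=p_*+s\epsilon$, $r=\sqrt3+\epsilon\Delta$, and the functions $\Psi,A,B,C_1,C_2,\widetilde G$ of $(s,\epsilon)$.

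\emph{Step 1: smoothness in $(s,\epsilon)$.} The function $\Psi$ in \eqref{jn:psi} is a rational function of $(s,\epsilon)$, regular near $\epsilon=0$. Equation \eqref{jn:delta} has $\Delta$-derivative $-3$ at $\epsilon=0$. So the implicit function theorem gives a real analytic solution $\Delta(s,\epsilon)$ near $\epsilon=0$, and it is the root used in Theorem~\ref{jn:existence}. Hence $A,B,C_1,C_2,\widetilde G$ are analytic in $(s,\epsilon)$ near $\{\epsilon=0\}$.

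\emph{Step 2: values at $\epsilon=0$.} From $p_*^3-3p_*+1=0$ we get $\ph(p_*)=\tfrac12$. Then $\Psi(s,0)=-2$, so $\Delta(s,0)=\tfrac23$. This gives
\[
A=\tfrac12,\quad B=(3p_*^2-3)s-2,\quad C_1=p_*-\sqrt3,\quad C_2=1-2p_*+\sqrt3 .
\]
Therefore
\[
\widetilde G(s,0)=\tfrac12C_2^2-\bigl((3p_*^2-3)s-2\bigr)(p_*-\sqrt3)^2 .
\]
This is affine in $s$. Its unique zero is exactly the constant $c$ of the statement. Its $s$-derivative is $-(3p_*^2-3)(p_*-\sqrt3)^2$, which is nonzero because $p_*\ne1$ and $p_*\ne\sqrt3$.

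\emph{Step 3: implicit function theorem.} The implicit function theorem then gives a smooth $s(\epsilon)$ with $s(0)=c$ and $\widetilde G(s(\epsilon),\epsilon)=0$. For $\epsilon>0$ this is equivalent to \eqref{jn:equal}. This yields $p=p_*+c/k+O(k^{-2})$ and $r=\sqrt3+\tfrac{2}{3k}+O(k^{-2})$, the latter from $\Delta(c,0)=\tfrac23$ and smoothness of $\Delta$.

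\emph{Step 4: conditions of Lemma~\ref{jn:reduction}.} These follow by continuity from the strict inequalities at $\epsilon=0$:
\begin{itemize}[label=--]
\item $1<p_*<\sqrt3<2$ gives $1<p,r<2$ and $p\ne r$;
\item $A=\tfrac12>0$ gives $h_1=A/\epsilon>0$;
\item $B(c,0)=C_2^2/(2C_1^2)>0$ gives $h_2=\epsilon B>0$;
\item $C_1<0$ and $C_2<0$ (since $2p_*>1+\sqrt3$) give $\kappa_1\kappa_2\ne0$.
\end{itemize}
Equation \eqref{jn:h0} holds by construction of $r$. So Lemma~\ref{jn:reduction} applies for all large $k$.

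\emph{Step 5: limits.} We have $kh_2=B(s(\epsilon),\epsilon)\to B(c,0)=(1-2p_*+\sqrt3)^2/(2(p_*-\sqrt3)^2)$. The dihedral angles satisfy $-\cos\theta=\ph(q)$. Since $\ph(p)\to\tfrac12$ and $\ph(r)=\epsilon\Psi\to0$, the angles tend to $120^\circ$ for intersecting labels and to $90^\circ$ for disjoint labels.

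The only delicate point is Step~1. The expressions $A$ and $B$ come from dividing by or multiplying with $\epsilon$, so their smoothness at $\epsilon=0$ must be confirmed from the explicit formulas. Those formulas show it directly: $A$ and $B$ are polynomial in $\epsilon$, $s$ and $\Psi$, and $\Psi$ is analytic. The numerical value $c\approx0.836089$ is then a routine evaluation.
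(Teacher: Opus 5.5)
Your proposal is correct and follows the paper's proof. Both evaluate $\Psi=-2$, $\Delta=2/3$ and the affine function $\widetilde G(s,0)$ at $\epsilon=0$, then apply the implicit function theorem first to \eqref{jn:delta} and then to $\widetilde G=0$. Your Step 4 checks the hypotheses of Lemma~\ref{jn:reduction} by continuity from the strict inequalities at $\epsilon=0$, which spells out what the paper leaves implicit in ``the limits follow by substitution.''
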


\begin{proof}
At $\epsilon=0$, \eqref{jn:psi} and \eqref{jn:delta} give $\Psi=-2$ and $\Delta=2/3$, and
\[
 \widetilde G(s,0)=\tfrac12(1-2p_*+\sqrt3)^2-\bigl((3p_*^2-3)s-2\bigr)(p_*-\sqrt3)^2.
\]
Its only zero is $s=c$, where its derivative in $s$ is negative. The implicit function theorem, applied first to \eqref{jn:delta} and then to $\widetilde G=0$, gives an analytic branch $s=c+O(\epsilon)$, and the limits follow by substitution.
\end{proof}

For $k=9$, the interval used in the proof contains a solution with $p\approx1.6679131531$ and $r\approx1.7858080019$. The theorem proves existence, not the number of Johnson solutions for each $k$. For $9\le k\le20$, the dimension $\binom k2-1$ is at most $189$ and is also covered by Theorem~\ref{dh:main}, with a different symmetry group. For $k\ge21$, the Johnson construction provides an infinite set of dimensions beyond the finite dihedral range.

\section{Towards the remaining dimensions}\label{hd:directions}

By Corollary~\ref{dh:cor} and Theorem~\ref{jn:existence}, Conjecture~\ref{hd:all-dimensions} is open only for $n\ge201$ with $n\ne\binom k2-1$. We see three ways to proceed.

\subsection{The dihedral family for all \texorpdfstring{$N$}{N}}

A proof of Conjecture~\ref{dh:conj} would give examples in every remaining dimension. Extending the computation in Theorem~\ref{dh:main} reaches further finite ranges, but cannot prove the assertion for all $N$. One possible approach is to solve the integral equation in Section~\ref{dh:section}, prove invertibility of its linearization, and compare its solution quantitatively with the discrete equations \eqref{dh:eq}. The estimates would need to be uniform in $N$ and strong enough to meet the certified range. The Johnson family offers a simpler model: its reduced equation has a nondegenerate limit as $k\to\infty$. For the dihedral family, the behaviour of the proposed profile near $0$ also needs to be understood.

\subsection{Families with a simultaneous interchange of pairs}

The examples in dimensions six and seven are invariant under an involution that interchanges several pairs of vertices at once, with one fixed vertex when the number of vertices is odd. Such an involution reduces the number of equations without forcing any single transposition, and the reflection test of Proposition~\ref{sx:reflectiontest} can exclude transpositions by strict inequalities. One would like to find a branch of such solutions whose size grows with the number of pairs and to control it uniformly. We have not found such a branch.

\subsection{Other families}

Other permutation actions may reduce the regularity equations to a small number of spectral equations, as in the circulant and Johnson constructions. A useful family would have solutions for an unbounded set of dimensions, together with a positive kernel and a way to exclude every transposition. Finding such a family in dimensions not covered by the Johnson construction would give another route to Conjecture~\ref{hd:all-dimensions}.

\section{Open problems}\label{hd:open}

\begin{enumerate}[label=\arabic*.]
\item Prove Conjecture~\ref{ld:two-reflections}, that is, $(G_4)$ and $(G_5)$. In dimension four, are the regular simplex, $U_1$ and $U_2$ the only simplices with regular Morley simplex, up to similarity? In dimension three the corresponding question is equivalent to $(G_3)$ \cite{TranTetra}.
\item Prove Conjecture~\ref{dh:conj}, or at least Conjecture~\ref{hd:all-dimensions}. The smallest open dimension is $201$.
\item Does the dihedral system admit other nonregular branches? Can one construct a seven-simplex with regular Morley simplex whose only nonidentity isometry fixes two vertices and interchanges three pairs?
\item Is there a simplex with regular Morley simplex and trivial isometry group, or one with all edge lengths distinct? All nonregular examples certified in this paper have a nontrivial symmetry.
\item For fixed $n\ge3$, are there only finitely many similarity classes of $n$-simplices with regular Morley simplex?
\item How large is the neighbourhood in Theorem~\ref{lin:main}, and how does it depend on $n$?
\item Which of these simplices can be expressed by radicals? In dimension three $T_2$ can and $T_1$ cannot; in dimension four neither $U_1$ nor $U_2$ can. The dihedral and Johnson simplices are defined by small polynomial systems for each $N$ or $k$, and their Galois groups could be computed in small cases.
\item Extend the sector construction of \cite[Theorem 6]{TranActa} to higher dimensions and study the same questions for it.
\end{enumerate}

\begin{appendices}

\section{The polynomials for \texorpdfstring{$U_2$}{U2}}\label{app:U2}

In Theorem~\ref{ld:U2} the numbers $q$, $W=w^2$, $E=\varepsilon^2$ and $\rho=\delta\varepsilon/w$ are the values at $t=p$ of the following polynomials of degree seven:
\[
q(t)=\frac1{979353401265}\sum_{i=0}^7a_it^i,\;
W(t)=\frac1{1149701152527632835}\sum_{i=0}^7b_it^i,
\]
\[
E(t)=\frac1{165556965963979128240}\sum_{i=0}^7e_it^i,\;
\rho(t)=\frac1{16555696596397912824000}\sum_{i=0}^7r_it^i,
\]
with the coefficients
\begin{center}\footnotesize
\begin{tabular}{rrrrr}\toprule
$i$&$a_i$&$b_i$&$e_i$&$r_i$\\\midrule
0&$2044678924535$&$27872689682431037$&$-7235108445085873393$&$-3789659120408919888617$\\
1&$692731432034$&$-3156726082013655871$&$-44951627927193795556$&$-1693925628457883637113$\\
2&$-1726750815621$&$2240075227696022046$&$-19190583740504464020$&$4010686909907132162628$\\
3&$57418988298$&$1616487564855589476$&$38511387194611381104$&$1307911083224799214518$\\
4&$579692004792$&$-410185382810556864$&$18103956178091322036$&$-806372707562910773364$\\
5&$-61069470858$&$-72635960487906234$&$-4960215208001841096$&$-14857873806497543124$\\
6&$-20596391096$&$-4194250138476272$&$-1143622367558162540$&$12509643828607964960$\\
7&$-2086645424$&$326576872153132$&$-97258609107918560$&$1894695383395472012$\\\bottomrule
\end{tabular}
\end{center}
The polynomial $q(t)$ agrees at $t=p$ with the formula $q=\frac12\bigl(p+4-\sqrt{3(4-p^2)}\bigr)$ of Theorem~\ref{ld:U2}. The polynomials $A$ and $C$ for $U_1$ have coefficients with about $75$ digits and are given in the file \texttt{check\_U1\_U2.py}.

\section{Verification of the computations}\label{sec:reproducibility}

The computer assisted steps are checked by the following Python scripts, available as ancillary files with the arXiv version. They use only the standard library. All inequalities used in proofs are decided in exact rational arithmetic, or with intervals whose endpoints are rational numbers rounded outwards; floating point numbers serve only to find approximate solutions and approximate inverses. The scripts should not be run with Python's \texttt{-O} option, which disables assertions.

From the \texttt{anc} directory, the command \texttt{python verify\_all.py} runs all six checks and stops if any fails. The shared toolkit rejects runs with assertions disabled. The accompanying \texttt{README.md} gives the individual commands and explains how to extend the finite circulant range. The programs have been checked with Python~3.12; no additional packages are required.

\begin{center}\small
\begin{tabular}{p{.3\textwidth}p{.62\textwidth}}\toprule
File&Content\\\midrule
\texttt{morley\_tools.py}&polynomial arithmetic, Sturm sequences, rational intervals, factorization modulo a prime, intervals with outward rounding, enclosures of $\pi$ and of cosines, the contraction test of \eqref{hd:bounds}\\
\texttt{check\_linearization.py}&a numerical illustration of Proposition~\ref{sx:gram} and Theorem~\ref{lin:main} (not needed for the proofs)\\
\texttt{check\_cyclic5.py}&the two sign conditions in the proof of Theorem~\ref{ld:d5}\\
\texttt{check\_U1\_U2.py}&the exact identities, Sturm counts and interval bounds of Theorems~\ref{ld:U2} and~\ref{ld:U1}, and the factorizations of Theorem~\ref{ld:galois}\\
\texttt{check\_dim6\_dim7.py}&the bounds in the proofs of Theorems~\ref{hd:counterexample} and~\ref{hd:seven-counterexample}\\
\texttt{check\_circulant.py}&Theorem~\ref{dh:main} for $9\le N\le201$; a larger bound can be passed as an argument\\
\texttt{check\_johnson.py}&the inequalities \eqref{jn:signs} and the cases $9\le k\le15$ of Theorem~\ref{jn:existence}\\\bottomrule
\end{tabular}
\end{center}

\end{appendices}

\section*{Acknowledgements}
\phantomsection
\addcontentsline{toc}{section}{Acknowledgements}

The author thanks Tran Manh Dung and Nguyen Xuan Tho for their help with the computations. The AI assistants ChatGPT and Claude were used for exploratory computations, for checking algebra and in preparing the manuscript; the author takes full responsibility for the content.

\phantomsection

\begin{flushright}
Quang Hung Tran,\\
High School for Gifted Students,\\
Vietnam National University, Hanoi, Vietnam,\\
Email: \href{mailto:tranquanghung@hus.edu.vn}{tranquanghung@hus.edu.vn}\\
ORCID: \href{https://orcid.org/0000-0003-2468-4972}{0000-0003-2468-4972}
\end{flushright}


\addcontentsline{toc}{section}{References}
\begin{thebibliography}{98}

\bibitem{Braude}
E.~J. Braude, \emph{Generalizing the Morley trisector and various theorems with realizability computations}, arXiv:1603.03463, 2016.

\bibitem{Child}
J.~M. Child, \emph{Proof of Morley's theorem}, Math. Gaz. \textbf{11} (1923), 171.

\bibitem{CoxeterGreitzer}
H.~S.~M. Coxeter and S.~L. Greitzer, \emph{Geometry Revisited}, New Mathematical Library, Vol.~19, Mathematical Association of America, Washington, DC, 1967.

\bibitem{DergiadesTran}
N.~Dergiades and Q.~H. Tran, \emph{On some extensions of Morley's trisector theorem}, J. Geom. Graph. \textbf{24} (2020), 197--205.

\bibitem{GrinbergOrhon}
E.~L. Grinberg and M.~Orhon, \emph{Morley trisectors and the law of sines with reflections}, Amer. Math. Monthly \textbf{128} (2021), 163--167.

\bibitem{Hashimoto}
Y.~Hashimoto, \emph{A short proof of Morley's theorem}, Elem. Math. \textbf{62} (2007), 121.

\bibitem{Krawczyk}
R.~Krawczyk, \emph{Newton-Algorithmen zur Bestimmung von Nullstellen mit Fehlerschranken}, Computing \textbf{4} (1969), 187--201.

\bibitem{Morley}
F.~Morley, \emph{On the metric geometry of the plane $n$-line}, Trans. Amer. Math. Soc. \textbf{1} (1900), 97--115.

\bibitem{Neumaier}
A.~Neumaier, \emph{Interval Methods for Systems of Equations}, Encyclopedia of Mathematics and its Applications, Vol.~37, Cambridge University Press, Cambridge, 1990.

\bibitem{OakleyBaker}
C.~O. Oakley and J.~C. Baker, \emph{The Morley trisector theorem}, Amer. Math. Monthly \textbf{85} (1978), 737--745.

\bibitem{Rump}
S.~M. Rump, \emph{Verification methods: rigorous results using floating point arithmetic}, Acta Numer. \textbf{19} (2010), 287--449.

\bibitem{SvrtanVeljan}
D.~Svrtan and D.~Veljan, \emph{Side lengths of Morley triangles and tetrahedra}, Forum Geom. \textbf{17} (2017), 123--142.

\bibitem{TranActa}
Q.~H. Tran, \emph{Morley's trisector theorem for isosceles tetrahedron}, Acta Math. Hungar. \textbf{165} (2021), 308--315. \href{https://doi.org/10.1007/s10474-021-01184-0}{doi:10.1007/s10474-021-01184-0}.

\bibitem{TranSynthetic}
Q.~H. Tran, \emph{A synthetic proof of the Morley trisector theorem using congruent and similar triangles}, Elem. Math. \textbf{80} (2025), 70--72.

\bibitem{TranTetra}
Q.~H. Tran, \emph{The converse problem for the Morley tetrahedron: counterexamples, conjectures, and partial results}, preprint, 2026, companion paper.

\end{thebibliography}
\end{document}